\newcommand{\Q}{\mathbb Q}
\newcommand{\N}{\mathbb{N}}
\newcommand{\Z}{\mathbb{Z}}
\newcommand{\R}{\mathbb{R}}
\newcommand{\conv}{\operatorname{conv}}
\newcommand{\setcond}[2]{\left\{ #1 \,:\, #2 \right\}}
\newcommand{\EndMarker}{\hspace*{\fill}~$\square$} 
\newcommand{\old}[1]{{}}
\newcommand{\intr}{\operatorname{int}}
\newcommand{\relintr}{\operatorname{relint}}
\newcommand{\dist}{\operatorname{dist}}
\newcommand{\cC}{\mathcal{C}}
\newcommand{\cK}{\mathcal{K}}
\newcommand{\bB}{\mathbb{B}}
\newcommand{\dotvar}{\,\cdot\,}
\newcommand{\eps}{\varepsilon}
\newcommand{\copr}{\mathbin{\Diamond}}
\newcommand{\cF}{\mathcal{F}}
\newcommand{\vol}{\operatorname{vol}}
\newtheoremstyle{mythmstyle}
	{\topsep}
	{\topsep}
	{\itshape}
	{}
	{\scshape}
	{.}
	{3pt}
	{}
\theoremstyle{mythmstyle}
\newtheorem{nn}{}[section]
\newtheorem{lemma}[nn]{Lemma}
\newtheorem{theorem}[nn]{Theorem}
\newtheorem{cor}[nn]{Corollary}
\newtheorem{prop}[nn]{Proposition}
\newtheorem{example}{Example}
\theoremstyle{definition}
\newtheorem{REMARK}[nn]{Remark}
\newenvironment{remark}{\begin{REMARK}}{\EndMarker\end{REMARK}}
\newcommand{\sprod}[2]{#1 \cdot #2 }
\newcommand{\aff}{\operatorname{aff}}
\newcommand{\thmheader}[1]{{\upshape (#1.)}}
\newcommand{\dd}{\operatorname{d}}
\numberwithin{equation}{section}
\title{Lifting properties of maximal lattice-free polyhedra}
\author{Gennadiy Averkov\footnote{Institute of Mathematical Optimization, Faculty of Mathematics, University of Magdeburg, Germany} \ and Amitabh Basu\footnote{Department of Applied Mathematics and Statistics, The Johns Hopkins University, MD, USA}}
\begin{document}

\maketitle

\begin{abstract} 
	We study the uniqueness of minimal liftings of cut-generating functions obtained from maximal lattice-free polyhedra. We prove a basic invariance property of unique minimal liftings for general maximal lattice-free polyhedra. This generalizes a previous result by Basu, Cornu\'ejols and K\"oppe~\cite{bcm} for {\em simplicial} maximal lattice-free polytopes, thus completely settling this fundamental question about lifting for maximal lattice-free polyhedra. We further give a very general iterative construction to get maximal lattice-free polyhedra with the unique-lifting property in arbitrary dimensions. This single construction not only obtains all previously known polyhedra with the unique-lifting property, but goes further and vastly expands the known list of such polyhedra. 
Finally, we extend characterizations from~\cite{bcm} about lifting with respect to maximal lattice-free simplices to more general polytopes. These nontrivial generalizations rely on a number of results from discrete geometry, including the Venkov-Alexandrov-McMullen theorem on translative tilings and characterizations of zonotopes in terms of central symmetry of their faces.
\end{abstract}

\section{Introduction}

%
%
%
\paragraph{Mixed-integer corner polyhedra.} 

%
%
The mixed-integer {\em corner polyhedron} is the convex hull of a {\em mixed-integer set} of the following form:
\begin{equation}
	\label{def mixed-int set}
	X_{f}(R,P) := \setcond{(s,y) \in \R_+^k \times \Z_+^\ell }{ f + Rs + Py\in \Z^n },
\end{equation}
%
where $k, \ell \in \Z_+, n \in \N$, $R \in \R^{n \times k}, \ P  \in \R^{n \times \ell}$ and $f \in \R^n \setminus \Z^n$. The set $X_f(R,P)$ was first studied by Gomory~\cite{MR0256718} for the purposes of generating cutting planes for general mixed-integer linear problems (MILPs). A short description of Gomory's idea is as follows. Consider a general MILP with a feasible region given in the standard form: 
	\begin{equation}
		\label{fis set gen MILP}
		\setcond{x \in \R_+^h \times \Z_+^q}{A x =b }.
	\end{equation}
	Here $A x = b$ is the defining linear system, $h$ is the number of continous variables and $q$ is the number of integer variables. The simplex method applied to the linear relaxation of the MILP decomposes the variables of $x$ into basic and non-basic ones. As a result, we can split $x$ into four vectors:
the vector $s \in \R_+^k$ of non-basic continuous variables, the vector $t \in \R_+^m$ of basic continuous variables, the vector $y \in \Z_+^\ell$ of non-basic integer variables and the vector $z \in \Z_+^n$ of  basic integer variables. 
	
	The basic variables can be expressed through the non-basic ones. That is, one has $z = f+ R s + P y$ and $t = g + U s + V y$ for appropriate matrices $U,V,R, P$ and appropriate vectors $f \in \R^n, g \in \R^m$. Gomory suggests relaxing the MILP by discarding the nonnegativity conditions on basic variables, that is, the conditions $t \in \R_+^m$ and $z \in \Z_+^n$ are relaxed to $t \in \R^m$ and $z \in \Z^n$. After this, $z$ and $t$ can eliminated from the problem description, since they are expressed through the non-basic variables; the condition $z \in \Z^n$ can also be reformulated without any use of $z$ as $f + R s + R y \in \Z^n$. This gives rise to the mixed-integer set $X_f(R,P)$ as defined in~\eqref{def mixed-int set}. 
Previous studies show that the corner polyhedron (the convex hull of $X_f(R,P)$) has a quite special facial structure, in sharp contrast to the facial structure of the convex hull of \eqref{fis set gen MILP}, which has much less structure in general. 
There has been a vast amount of literature, specially in the last 5-6 years, on utilizing the corner polyhedron for developing general-purpose solution methods in mixed-integer linear programming. We refer the reader to the survey~\cite{corner_survey} for this line of research.

	\paragraph{Cut generating functions.} In the 1970s Gomory and Johnson \cite{MR0479415,MR0479416,johnson1974group} suggested the following approach for generation of cuts for $X_f(R,P)$. We denote the columns of matrices $R$ and $P$ by $r_1,\ldots,r_k$ and $p_1,\ldots,p_{\ell},$ respectively. We allow the possibility that $k =0$ or $\ell = 0$ (but not both). Given $n \in \N$ and $f \in \R^n \setminus \Z^n$, a \emph{cut-generating pair} $(\psi, \pi)$ for $f$ is a pair of functions $\psi, \pi:\R^n \to \R$ such that 
\begin{equation}
	\label{psi pi ineq}
	\sum_{i=1}^k\psi(r_i)s_i + \sum_{j=1}^\ell\pi(p_j)y_j \ge 1
\end{equation}
is a valid inequality (also called a \emph{cut}) for the set $X_f(R,P)$ for every choice of $k, \ell \in \Z_+$ and for all matrices $R \in \R^ {n \times k}$ and $P \in \R^ {n \times \ell}$. We emphasize that cut-generating pairs depend on $n$ and $f$ and do \emph{not} depend on $k,\ell$, $R$ and $P$. For technical reasons, it is customary to concentrate on {\em nonnegative} cut-generating functions. This paper will also consider only nonnegative cut-generating pairs, i.e., $\psi \geq 0$ and $\pi \geq 0$. 

\begin{example}\label{ex:GMI}
Let $n=1$ and $f \in \R\setminus \Z$. Define \begin{equation}\label{eq:GMI}\psi(r) = \max\bigg\{\frac{r}{1-[f]}, -\frac{r}{[f]}\bigg\} \qquad \pi(p) = \min\bigg\{\frac{[p]}{1-[f]}, \frac{1-[p]}{[f]}\bigg\},\end{equation} where $[x] = x - \lfloor x \rfloor$ denotes the fractional part of any real number $x$. Then $(\psi, \pi)$ forms a cut-generating pair for $f$; i.e., $\sum_{i=1}^k\psi(r_i)s_i + \sum_{j=1}^\ell\pi(p_j)y_j \ge 1$ is a valid inequality for $X_f(R,P)$. 
In this case $X_f(R,P)$ and the pair $(\psi,\pi)$ are determined from a single row of the
simplex tableaux of the underlying MILP. \eqref{eq:GMI} gives the formula for the well-known {\em Gomory Mixed-Integer (GMI) cut}~\cite{gomory1960algorithm}. 
\end{example}

We call a subset $B$ of $\R^n$ \emph{lattice-free} if $B$ is $n$-dimensional, closed, convex and the interior of $B$ does not contain integer points. If $B$ is a lattice-free set and $f \in \intr(B)$, then $B$ can be defined analytically using the \emph{gauge function} of $B-f$, which is the function $\phi_{B-f}(r) := \inf \setcond{\rho > 0}{\frac{r}{\rho} \in B-f}.$ Intuitively, $\phi_{B-f}(r)$ plays the role of the length of $r$. Note that $\phi_{B-f}$ satisfies all the properties of the seminorm with the exception of the symmetry $\phi_{B-f}(r) = \phi_{B-f}(-r)$, which need not be fulfilled. Thus, $\phi_{B-f}$ induces an oriented ``distance'' on $\R^n$, where under orientation we mean that the ``distance'' from $a \in \R^n$ to $b \in \R^n$ need not be equal to the distance from $b$ to $a$. By the choice of $B$, the ``distance'' from $f$ to every point of $\Z^n$ is at least one. It was observed by Johnson~\cite{johnson1974group} that if $(\psi, \pi)$ is a cut-generating pair for $f$, then $\psi$ is the gauge function of $B-f$ for some lattice-free set $B$. Therefore, one approach to obtaining cut-generating pairs is to start with some lattice-free set $B$ with $f \in \intr(B)$, let $\psi$ be the gauge function of $B-f$ and find functions $\pi$ that can be combined with $\psi$ to form a valid cut-generating pair for $f$. For example, it is not hard to see that for any lattice-free set $B$ with $f \in \intr(B)$, $(\phi_{B-f}, \phi_{B-f})$ is a cut-generating pair. Indeed, notice that $\phi_{B-f}$ shares the following properties of a distance function: {\em positive homogeneity}, i.e., $\phi_{B-f}(\lambda r) = \lambda\phi_{B-f}(r)$ for every $r \in \R^n$ and $\lambda \geq 0$, and the {\em triangle inequality} or {\em subadditivity}, i.e., $\phi_{B-f}(r_1 + r_2) \leq \phi_{B-f}(r_1) + \phi_{B-f}(r_2)$ for every $r_1, r_2 \in \R^n$. Moreover, since $B$ is lattice-free, $\phi_{B-f}(x - f) \geq 1$ for every $x \in \Z^n$. So for any $(s,y) \in X_f(R,P)$, since $\sum_{i=1}^k r_is_i + \sum_{j=1}^\ell p_jy_j \in \Z^n - f$, we have $1 \leq \phi_{B-f}(\sum_{i=1}^k r_is_i + \sum_{j=1}^\ell p_jy_j) \leq \sum_{i=1}^k \phi_{B-f}(r_is_i) + \sum_{j=1}^\ell \phi_{B-f}(p_jy_j) =  \sum_{i=1}^k \phi_{B-f}(r_i)s_i + \sum_{j=1}^\ell \phi_{B-f}(p_j)y_j$.

In general, for a particular lattice-free set $B$ with $f\in \intr(B)$ and $\psi$ given by the gauge of $B-f$, there exist multiple functions $\pi$ that can be appended to make $(\psi, \pi)$ a cut-generating pair. If $(\psi,\pi)$ is a cut-generating pair, then $\pi$ is called a \emph{lifting} of $\psi$. 
The set of liftings of $\psi$ is partially ordered: we say that a lifting $\pi'$ for $\psi$ \emph{dominates} another lifting $\pi''$ for $\psi$ if $\pi'(r) \le \pi''(r)$ for every $r \in \R^n$. A \emph{minimal lifting} for $\psi$ is a lifting which is not dominated by another (distinct) lifting for $\psi$. A simple application of Zorn's lemma shows that that every lifting $\pi$ of $\psi$ is dominated by some minimal lifting $\pi'$ of $\psi$; see Theorem~1.1 and its proof in~\cite{basu2013k+1}.

\paragraph{Computations with cut-generating functions and unique minimal lifting.} The main idea behind cut-generating functions is to keep an arsenal of cut generating pairs $(\psi, \pi)$ that can be efficiently evaluated so that when we have a concrete MILP to solve, we ``plug in'' $r^1, \ldots, r^k$ into $\psi$ and $p^1, \ldots, p^\ell$ into $\pi$ and we obtain~\eqref{psi pi ineq} as a cutting plane for solving the MILP. Thus, we want $\psi$ and $\pi$ to be computable quickly, and thus require computational perspectives on the abstract notions of gauge and minimal liftings. This has been the focus of recent research on the corner polyhedron. We introduce these ideas next for motivating our work in this paper.

Lattice-free sets maximal with respect to inclusion are called \emph{maximal lattice-free}. It is known that every maximal lattice-free set in $\R^n$ is a polyhedron and the recession cone of $B$ is a linear space spanned by rational vectors; see \cite{MR1114315}, \cite{MR2724071} and \cite{MR3027668}. Then $B$ can be given by finitely many linear inequalities in the form 
\begin{equation}
	\label{B analytically}
	B = \setcond{x \in \R^n}{a_i \cdot (x-f) \le 1 \ \forall i \in I},
\end{equation}
where $I$ is a nonempty finite index set with at most $2^n$ elements (the fact that $|I| \leq 2^n$ is a theorem due to Lovasz~\cite{MR1114315} - see also~\cite{doignon} and~\cite{scarf}). An important observation in recent work is that, using the fact that the recession cone of $B$ is a linear space, the gauge function $\phi_{B-f}$ is computable by means of the simple formula
\begin{equation}\label{eq:formula-for-gauge}
	\phi_{B-f}(r) = \max \setcond{ \sprod{a_i}{r}}{i \in I}.
\end{equation}
%


Given a maximal lattice-free set $B$, define the following $\Z^n$-periodic function derived from its gauge function $\phi_{B-f}$:

\begin{equation}
	\label{psi star def}
	\phi_{B-f}^\ast(r) := \inf_{w \in \Z^n} \phi_{B-f}(r+w).
\end{equation}

It is well-known that if $\psi$ is the gauge $\phi_{B-f}$ of a lattice-free set $B$ with $f \in \intr(B)$ and $\pi = \phi_{B-f}^*$, then $(\psi,\pi)$ is a cut-generating pair. To see this, observe that $X_f(R,P) = X_f(R, P+W)$ for any integral matrix $W$. 
Consequently, the cut-generating pair $(\phi_{B-f}, \phi_{B-f})$ can be strengthened to $(\phi_{B-f}, \phi^\ast_{B-f})$.

\addtocounter{example}{-1}
\begin{example}[continued] \label{ex:GMI-2}
In Example~\ref{ex:GMI}, consider the closed interval with endpoints $\lfloor f \rfloor, \lceil f \rceil$ to be the lattice-free set $B$ (recall that we are working in $n=1$). Then, $\psi, \pi$ from the example are given by formulas~\eqref{eq:formula-for-gauge} and~\eqref{psi star def} respectively, as illustrated in this figure:
\end{example}

\begin{figure}[htbp]
\hspace{70pt}\scalebox{0.5}{\includegraphics{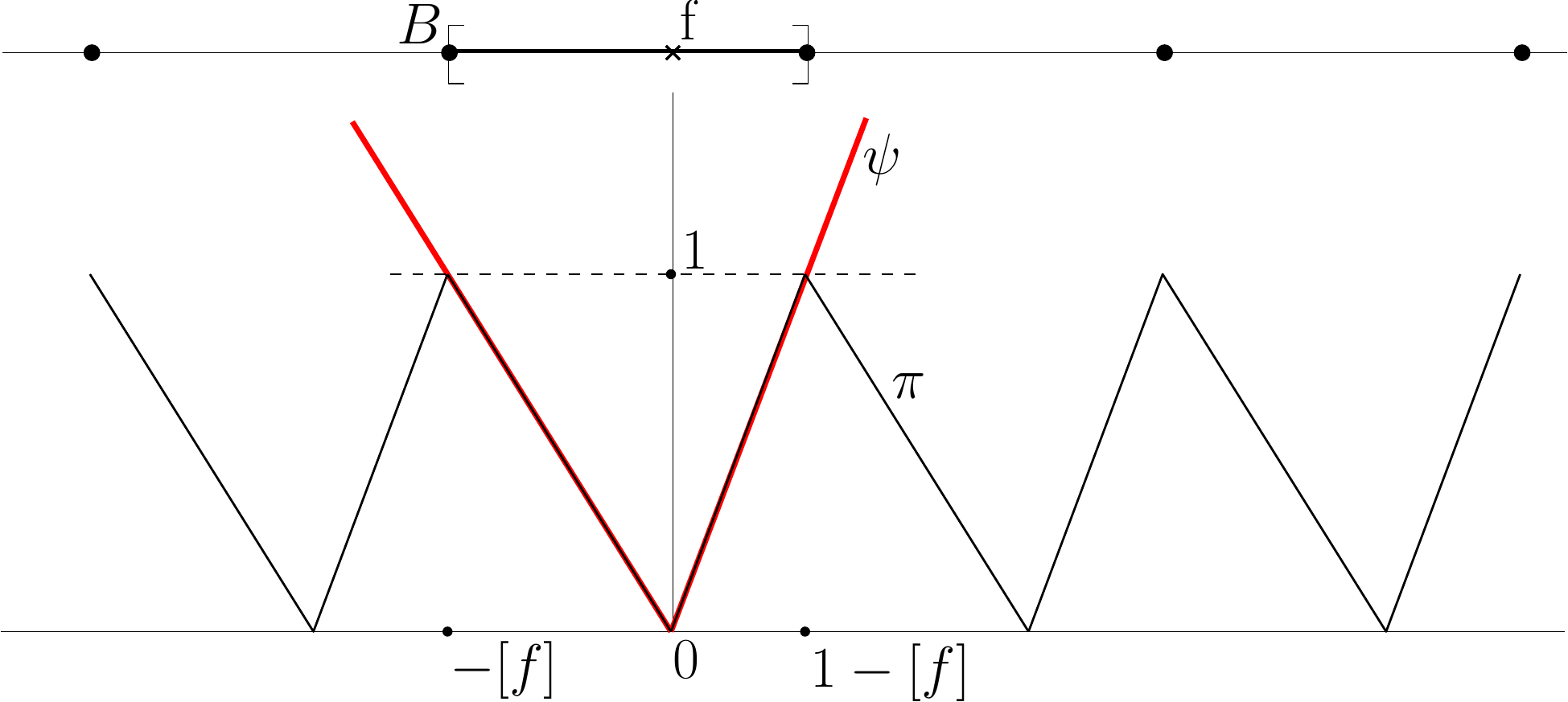}} \\
\end{figure}
%

 Not all cut-generating pairs $(\psi, \pi)$ are of the form where $\psi$ is a gauge function of a maximal lattice-free convex set, and $\pi$ is a minimal lifting of $\psi$ - for some cut generating pairs $\psi$ may be the gauge of a lattice-free set that is not maximal. However, we have a nice formula for $\psi$ given by \eqref{eq:formula-for-gauge} when it is the gauge of a maximal lattice-free convex set; thus, one can compute very quickly the coefficients in a valid inequality. This is one reason to study minimal liftings of such special functions $\psi$. The hope is that minimal liftings may also often have closed form expressions that can be computed efficiently and thus the coefficients in \eqref{psi pi ineq} can be computed quickly. However, not many efficient procedures for computing minimal liftings are known in the literature. In fact, to the best of the authors' knowledge, \eqref{psi star def} is the only expression that has been put forward in the literature as a way to compute liftings. The following proposition makes precise the claim that formula \eqref{psi star def} is efficiently computable when $n$ is not part of the input (meaning, in practice, that $n$ is small). The proof of this proposition appears in the appendix.

\begin{prop}\label{thm:trivial-lifting-compute}
Let $n \in \N$, $f\in \R^n \setminus \Z^n$ and $B$ be a maximal lattice-free polyhedron given by \eqref{B analytically}, where $I$ is a nonempty finite index set. Then the infimum in \eqref{psi star def} is attained for some $w \in \Z^n$. Furthermore, for each fixed dimension $n \in \N$, the computational problem of determining $\phi_{B-f}^\ast(r)$ in \eqref{psi star def} from the input consisting of the rational vectors $a_i \in \Q^n$ with $i \in I$ and the point $f \in \Q^n \setminus \Z^n$, given in the standard binary encoding, is solvable in polynomial time.
\end{prop}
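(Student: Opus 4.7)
The plan is to exploit the explicit formula $\phi_{B-f}(r) = \max_{i \in I} a_i \cdot r$ and reduce both claims to a problem about a lattice in $L^\perp$, where $L := \rec(B)$. By the structure theorem for maximal lattice-free sets quoted in the excerpt, $L$ is a rational linear subspace of $\R^n$. For any $u \in L$ one has both $u, -u \in L$, and from $B + u \subseteq B$ applied on a facet $\{x : a_i \cdot (x-f) = 1\}$ one gets $a_i \cdot u \le 0$ and $a_i \cdot (-u) \le 0$, forcing $a_i \cdot u = 0$ for every $i \in I$. Consequently $\phi_{B-f}(r+w) = \phi_{B-f}(r+w')$ whenever $w - w' \in L$, so the minimization in \eqref{psi star def} factors through the projection $\pi : \R^n \to L^\perp$, and $\Lambda := \pi(\Z^n)$ is a lattice of full rank in $L^\perp$ because $L$ is rational.

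To establish attainment, I would observe that $\pi(B-f)$ is a bounded polytope in $L^\perp$ (its recession cone is $\pi(L) = \{0\}$) containing $0$ in its interior, and its gauge on $L^\perp$ is again $y \mapsto \max_i a_i \cdot y$, hence coercive: $\max_i a_i \cdot y \ge c\|y\|$ for some $c > 0$. Therefore $\phi_{B-f}(r+w) \to \infty$ as $\|\pi(w)\| \to \infty$ through $\Lambda$, and the infimum in \eqref{psi star def} reduces to a minimum over finitely many candidate points, which is attained.

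For polynomial-time computability in fixed dimension $n$, the same bound is quantitative: taking $w = 0$ gives the upper estimate $M := \max_{i \in I} a_i \cdot r$, so a minimizing $w$ can be sought (modulo $L \cap \Z^n$) in the rational polyhedron $\{w : a_i \cdot (r+w) \le M \ \forall i \in I\}$, whose encoding length is polynomial in the input. Using the Lov\'asz bound $|I| \le 2^n = O(1)$, I would compute $\phi^\ast_{B-f}(r)$ by enumerating the index $i^\ast \in I$ attaining the maximum --- an $O(1)$ enumeration --- and, for each choice, solving
\begin{equation*}
\min\bigl\{ a_{i^\ast} \cdot (r+w) : a_i \cdot (r+w) \le a_{i^\ast} \cdot (r+w)\ \forall i \in I,\ w \in \Z^n \bigr\},
\end{equation*}
an integer linear program in $n$ variables with $O(1)$ constraints, via Lenstra's polynomial-time algorithm for integer programming in fixed dimension.

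The main obstacle I anticipate is the bookkeeping needed to make the reduction modulo $L \cap \Z^n$ rigorous at the level of bit-complexity: one has to compute $L$ and a basis of $L \cap \Z^n$ from the rational input (via a Hermite-normal-form computation on the $a_i$), change coordinates so that the integer program lives in $n$ variables with polynomially-bounded coefficients, and verify that the instance falls within the scope of Lenstra's theorem. These are standard steps, but they are the part that requires the most care; once in place, the polynomial-time bound follows immediately.
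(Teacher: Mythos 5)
Your proposal is correct and follows essentially the same route as the paper: attainment comes from the gauge growing linearly in directions transverse to the (rational) recession space, and polynomial-time computability comes from casting the minimization over $w \in \Z^n$ as an integer program in fixed dimension solved by Lenstra's algorithm. The only differences are cosmetic: you treat the bounded and unbounded cases uniformly by projecting along the recession space $L$ onto $L^\perp$ (using rationality of $L$ so that the projected integer lattice is discrete), where the paper instead applies a unimodular transformation to reduce to a bounded lower-dimensional factor of $B = B' \times \R^k$, and you remove the inner maximum by enumerating the at most $2^n$ candidate active indices to get pure integer programs, where the paper keeps a single auxiliary continuous variable $\rho$ and solves one mixed-integer program.
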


Recently, many authors have studied properties of minimal liftings; see \cite{bcccz,bcm,ccz,dw,dw2}. Given a maximal lattice-free polyhedron $B$ with $f \in \intr(B)$, there may exist multiple minimal liftings for the gauge function $\phi_{B-f}$, or there may be a unique minimal lifting. We say that a maximal lattice-free set $B$ has the \emph{unique-lifting property} with respect to $f \in \intr(B)$ if the gauge function of $B-f$ has exactly one minimal lifting. Otherwise, the gauge function has more than one minimal lifting and we say that $B$ has the \emph{multiple-lifting property} with respect to $f$. Maximal lattice-free sets with the unique-lifting property give rise to concrete formulas for cutting planes, because $\phi_{B-f}^\ast$ defined in~\eqref{psi star def} becomes the unique minimal lifting, and we can use the formula~\eqref{eq:formula-for-gauge} for $\phi_{B-f}$ and Proposition~\ref{thm:trivial-lifting-compute} to quickly compute the coefficients in~\eqref{psi pi ineq}.

Moreover, the function $\phi_{B-f}^\ast$ given by \eqref{psi star def} is not always a minimal lifting of $\phi_{B-f}$. The following proposition shows the importance of unique liftings in this context (the proof appears in the appendix).

\begin{prop}\label{thm:trivial-lifting-minimal}
Let $B$ be a maximal lattice-free polyhedron with $f\in \intr(B)$. Then $\phi_{B-f}^\ast$ defined by \eqref{psi star def} is a minimal lifting for the gauge function $\phi_{B-f}$ of $B-f$ if and only if $\phi_{B-f}$ has a unique minimal lifting. 
\end{prop}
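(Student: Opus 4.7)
The strategy is to prove both directions via the following key lemma: \emph{every minimal lifting $\pi$ of $\phi_{B-f}$ satisfies $\pi \leq \phi_{B-f}^\ast$ pointwise.} The lemma itself follows from two sub-claims about minimal liftings. First, every minimal lifting is $\Z^n$-periodic: for any lifting $\pi$, the periodic infimum $\pi^\dagger(r) := \inf_{w \in \Z^n} \pi(r+w)$ is again a valid lifting (using $X_f(R,P) = X_f(R, P + W)$ for integer $W$ combined with an $\varepsilon$-approximation of the infimum and then $\varepsilon \to 0$), and since $\pi^\dagger$ is $\Z^n$-periodic and $\pi^\dagger \leq \pi$, minimality forces $\pi = \pi^\dagger$. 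Second, every minimal lifting satisfies $\pi \leq \phi_{B-f}$: the pointwise minimum $\tilde\pi := \min(\pi, \phi_{B-f})$ is a valid lifting, which can be shown by, given any $(s,y) \in X_f(R,P)$, moving the indices $j \in J := \{j : \phi_{B-f}(p_j) \leq \pi(p_j)\}$ into the continuous block and invoking validity of $(\phi_{B-f}, \pi)$ on the enlarged continuous matrix $R' = (R\mid \{p_j\}_{j\in J})$ with correspondingly reduced $P'$. Minimality then forces $\pi = \tilde\pi \leq \phi_{B-f}$. Combining the two sub-claims, $\pi(r) = \pi(r+w) \leq \phi_{B-f}(r+w)$ for every $w \in \Z^n$, so $\pi \leq \phi_{B-f}^\ast$.

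The forward direction is then immediate: if $\phi_{B-f}^\ast$ is a minimal lifting, any other minimal lifting $\pi$ satisfies $\pi \leq \phi_{B-f}^\ast$ by the key lemma, so $\pi$ is a valid lifting dominating $\phi_{B-f}^\ast$; minimality of $\phi_{B-f}^\ast$ forces $\pi = \phi_{B-f}^\ast$, and uniqueness follows.

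The converse direction is the main obstacle. Suppose $\pi$ is the unique minimal lifting. Since $\phi_{B-f}^\ast$ is itself a valid lifting, it is dominated by some minimal lifting, which by uniqueness is $\pi$; hence $\pi \leq \phi_{B-f}^\ast$. To conclude $\phi_{B-f}^\ast = \pi$, and thereby that $\phi_{B-f}^\ast$ is itself minimal, one needs the reverse inequality $\phi_{B-f}^\ast \leq \pi$. I plan to argue this by contradiction: if $\pi(r_0) < \phi_{B-f}^\ast(r_0)$ at some $r_0$, I would produce a second minimal lifting $\pi' \neq \pi$, contradicting uniqueness. Using Proposition~\ref{thm:trivial-lifting-compute} one obtains an integer witness $w_0 \in \Z^n$ attaining $\phi_{B-f}^\ast(r_0) = \phi_{B-f}(r_0 + w_0)$; the construction of $\pi'$ would exploit such a witness together with the structural description of minimal liftings developed in~\cite{bcm} — in particular, that a minimal lifting can be ``tuned'' on a single orbit $r_0 + \Z^n$ via the choice of argmin — to produce a minimal lifting agreeing with $\phi_{B-f}^\ast$ at $r_0$, which is therefore strictly larger than $\pi(r_0)$ and so distinct from $\pi$. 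Making this construction rigorous, including verifying that the resulting modified function really is a minimal lifting and not merely a valid one that again collapses to $\pi$, is the most delicate step of the proof.
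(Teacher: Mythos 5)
Your key lemma (every minimal lifting $\pi$ satisfies $\pi \le \phi_{B-f}^\ast$), together with its two sub-claims and the forward direction, matches the paper's argument; the paper simply quotes periodicity and $\pi \le \phi_{B-f}$ as known facts, while you supply proofs, and your proofs are fine. (A minor remark: for this half you do not even need the attainment of the infimum from Proposition~\ref{thm:trivial-lifting-compute}, since $\pi(r)=\pi(r+w)\le \phi_{B-f}(r+w)$ for \emph{every} $w\in\Z^n$ already yields $\pi\le\phi_{B-f}^\ast$.)

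The converse direction is a genuine gap, and it is precisely the step you leave open. Your plan needs, for an arbitrary $r_0$, a \emph{minimal} lifting whose value at $r_0$ equals $\phi_{B-f}^\ast(r_0)$. Under the uniqueness hypothesis that claim is essentially equivalent to the statement being proved (it immediately forces $\pi(r_0)=\phi_{B-f}^\ast(r_0)$ for the unique minimal lifting $\pi$), and you give no construction: the proposed ``tuning of a minimal lifting on a single orbit via the choice of argmin'' is not an available result in this generality, and, as you note yourself, producing a merely valid lifting that agrees with $\phi_{B-f}^\ast$ at $r_0$ gives nothing, because the minimal lifting dominating it may collapse back to $\pi$. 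The paper closes this direction differently, using two facts from \cite{bcccz}: (i) every minimal lifting $\pi$ satisfies $\pi(r)=\phi_{B-f}(r)$ whenever $r+f\in R(B,f)$, and (ii) the geometric characterization recalled in Section~\ref{sec:basic-notions}, namely that uniqueness of the minimal lifting is equivalent to $R(B,f)+\Z^n=\R^n$. Given these, for any $r$ choose $w\in\Z^n$ with $r+w+f\in R(B,f)$; then periodicity gives $\pi(r)=\pi(r+w)=\phi_{B-f}(r+w)\ge\phi_{B-f}^\ast(r)$, so $\pi\ge\phi_{B-f}^\ast$, and combined with your key lemma $\pi=\phi_{B-f}^\ast$, i.e.\ $\phi_{B-f}^\ast$ is minimal. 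To repair your proof, either invoke (i) and (ii) as the paper does, or be prepared to redo the construction from \cite{bcccz} of distinct minimal liftings when the covering $R(B,f)+\Z^n=\R^n$ fails, which is exactly the machinery your sketched argument would have to replace.
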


We hope the above discussion lends credence to the claim that characterizing pairs $B, f$ with unique minimal liftings is an important question in the cut-generating function approach to cutting planes. The purpose of this manuscript is to study maximal lattice-free sets with the unique-lifting property. 

\paragraph{Our contributions.} We summarize our main contributions in this paper.
\begin{enumerate}[(i)]
	\item \emph{Invariance result.} A natural question arises: is it possible that $B$ has the unique-lifting property with respect to one $f_1 \in \intr(B)$, and has the multiple-lifting property with respect to another $f_2 \in \intr(B)$? This question was investigated in~\cite{bcm} and the main result was to establish that this cannot happen when $B$ is a {\em simplicial} polytope. We prove this for general maximal lattice-free polytopes without the simpliciality assumption: for every maximal lattice-free set $B$ either $B$ has the unique-lifting property with respect to every $f \in \intr(B)$ or otherwise $B$ has the multiple-lifting property with respect to every such $f$ (see Theorem~\ref{inv:thm}). 
	In view of this result, we can speak about the unique-lifting property of $B$, without reference to any $f \in \intr(B)$.

	\item \emph{Result on the volume of the lifting region modulo $\Z^n$.} To prove the mentioned invariance result, we study the so-called {\em lifting region} (defined precisely in Section~\ref{sec:basic-notions}), and show that its volume modulo the lattice $\Z^n$, is an affine function of $f$ (see Theorem~\ref{thm:affine}). This is also an extension of the corresponding theorem from~\cite{bcm} for simplicial $B$. Besides handling the general case, our proof is also significantly shorter and more elegant. We develop a tool for computing volumes modulo $\Z^n$, which enables us to circumvent a complicated inclusion-exclusion argument from~\cite{bcm} (see pages 349-350 in \cite{bcm}). 
	
	\item \emph{Topological result.} In Section~\ref{limits} we show that in the space of all maximal lattice-free sets, endowed with the Hausdorff metric, the subset of the sets having the unique-lifting property is closed (see Theorem~\ref{thm:limit}). This topological property turns out to be useful for verification of the unique-lifting property of maximal lattice-free sets built using the coproduct operations (see below).

	\item \emph{Constructions involving the coproduct operation.} Our techniques give an iterative procedure to construct new families of polytopes with the unique-lifting property in every dimension $n \in \N$. This vastly expands the known list of polytopes with the unique-lifting property. Furthermore, the coproduct operation enables to construct \emph{all} sets with the unique-lifting property for $n=2$. See Section~\ref{constr sect}; in particular, Theorem~\ref{coproduct thm} and Corollaries~\ref{coproduct of k sets cor},~\ref{coproduct lat free cor} and~\ref{cor:a1-an}.

	\item \emph{A characterization for special polytopes.}  A major contribution of~\cite{bcm} was to characterize the unique-lifting property for a special class of simplices. We generalize all the results from~\cite{bcm} to a broader class of polytopes called {\em pyramids}, which are constructed using the so-called coproduct operation; see Theorem~\ref{thm:unique-integer} and Theorem~\ref{thm:base-condition}. For these generalizations, we build tools in Section~\ref{spec pyr sect} that rely on nontrivial theorems from the geometry of numbers and discrete geometry, such as the Venkov-Alexandrov-McMullen theorem for translative tilings in $\R^n$ and McMullen's characterizations of polytopes with centrally symmetric faces~\cite{mcmullen}.
\end{enumerate}
	
All results that are stated in this paper for general maximal lattice-free polytopes, also hold for maximal lattice-free polyhedra $B$ that are unbounded. This follows from the fact that for unbounded maximal lattice-free sets, the lifting region can be viewed as a cylinder over the lifting region of a lower dimensional maximal lattice-free polytope. Restricting to polytopes keeps the presentation less technical.

In our arguments we rely on tools from convex geometry, the theory of polytopes and the geometry of numbers; for the background information see \cite{gruber}, \cite{MR0274683}, \cite{MR1216521},  \cite{ziegler}. The necessary basic notation and terminology, used throughout the manuscript, is introduced in the beginning of the following section. 

\section{Preliminaries}\label{sec:basic-notions}

\paragraph{Basic notation and terminology.} 

Let $n \in \N$. We will also use $e_1, e_2, \ldots, e_n$ to denote the standard unit vectors of $\R^n$. The notation $\vol$ stands for the $n$-dimensional volume (i.e., the Lebesgue measure) in $\R^n$ with the standard normalization $\vol([0,1]^n)=1$. For $a, b \in \R^n$ we introduce the segment $[a,b]$ and the relatively open segment $(a,b)$ joining $a$ and $b$ by:
\begin{align*}
	[a,b] &:= \setcond{(1-\lambda) a + \lambda b }{0 \le \lambda \le 1},
	\\ (a,b) &:= \setcond{(1-\lambda) a + \lambda b}{0<\lambda < 1}.
\end{align*}

We will denote the convex hull, affine hull, interior of a set $X$ and the relative interior of a convex set $X$ by $\conv(X), \aff(X), \intr(X)$ and $\relintr(X)$, respectively. 

Given a polytope $P$, we denote by $\cF(P)$ the set of all faces of $P$. For an integer $i \ge -1$, by $\cF^i(P)$ we denote the set of all $i$-dimensional faces of $P$. Note that  $\cF^{-1}(P) = \{\emptyset\}$ and $\cF^n(P)= \{P\}$. Elements of $\cF^0(P)$ are called \emph{vertices} of $P$. Elements of $\cF^i(P)$ with $i=\dim(P)-1$ are called \emph{facets} of $P$.

We shall make use of the following types of polytopes. Let $P$ be a polytope in $\R^n$:
\begin{itemize}
	\item $P$ is called a \emph{pyramid} if $P$ can be represented by $P=\conv(P_0 \cup \{a\})$, where $P_0$ is a polytope and $a$ is a point lying outside $\aff(P_0)$. In this case the polytope $P_0$ is called the \emph{base} of $P$ and $a$ is called the \emph{apex} of $P_0$.
	\item $P$ is called a \emph{double pyramid} if $P$ can be represented by $P = \conv(P_0 \cup \{a_1,a_2\})$, where $P_0$ is a polytope and $a_1,a_2$ are points such that $[a_1,a_2]$ intersects $P_0$, but neither $a_1$ nor $a_2$ is in the affine hull of $P_0$.
	\item $P$ is called a \emph{spindle} if $P$ can be represented by $P= (P_1 + a_1) \cap (P_2 + a_2)$, where $P_1,P_2$ are pointed polyhedral cones and $a_1,a_2$ are points satisfying $a_i \in \relintr(P_j) + a_j$ for $\{i,j\} = \{1,2\}$. The points $a_1,a_2$ are called the \emph{apexes} of $P$.
\end{itemize}

\paragraph{Geometric characterization of unique minimal liftings.} The authors of \cite{bcccz} were able to characterize the unique lifting property in a purely geometric way. Let $B$ be a maximal lattice-free polytope in $\R^n$ and let $f \in B$ (not necessarily in the interior of $B$). With each $F\in \cF(B) \setminus \{\emptyset,B\}$ and $f$ we associate $\conv(\{f\} \cup F)$, which is a pyramid of dimension $\dim(F)+1$ whenever $f \not \in F$. For every $z \in F \cap \Z^n$ we define the polytope
\[
	S_{F,z}(f) := \conv( \{f\} \cup F) \cap \bigl(z+f - \conv(\{f\} \cup F) \bigr),
\]
given as the intersection of $\conv(\{f\} \cup F)$ and the reflection of $\conv(\{f\} \cup F)$ with respect to $(z+f)/2$. Note that, if $f \not\in F$ (which is a generic situation), then $S_{F,z}(f)$ is a spindle. Furthermore, we define 
\[
	R_{F}(f) := \bigcup_{z \in F \cap \Z^n} S_{F,z}(f),
\]
the union of all sets $S_{F,z}(f)$ arising from the face $F$. 

The set
\[
	R(B,f) := \bigcup_{ F \in \cF(B) \setminus \{\emptyset,B\}} R_F(f) 
\]
 is called the {\em lifting region} of $B$ associated with the point $f$. In \cite{bcccz} it was shown that for $f\in \intr(B)$
%
%
\begin{align*}
	B \ & \text{has the unique-lifting property with respect to $f$} & &\Longleftrightarrow & R(B,f) + \Z^n &= \R^n.
\end{align*}

Thus, in the rest of the paper, we study the covering properties of $R(B,f)$ by lattice translates to analyze the unique-lifting property of $B$ with respect to $f\in \intr(B)$.

We observe that, since $R_{F_1}(f) \subseteq R_{F_2}(f)$ for $F_1,F_2 \in \cF(B) \setminus \{B, \emptyset\}$ satisfying $F_1 \subseteq F_2$, the lifting region $R(B,f)$ can also be represented using the set $\cF^{n-1}(B)$ of all facets of $B$ as follows:
\[
	R(B,f) = \bigcup_{F \in \cF^{n-1}(B)} R_F(f).
\]

\section{Invariance theorem on the uniqueness of lifting}

\paragraph{Integral formula for the volume of a region modulo $\Z^n$.} For $t \in \R$ let $[t] = t - \lfloor t \rfloor$ be the fractional part of $t$. For any set $X\subseteq \R^n$, define $X / \Z^n := \{[x]:  x \in X\} \subseteq [0,1]^n$. Observe that a compact set $X\subseteq\R^n$ covers $\R^n$ by lattice translations, i.e., $X+\Z^n = \R^n$ if and only if $\vol( X / \Z^n) = 1$.

Let $X$ be a finite subset of $\R^n$ and assume that we wish to count the number of elements in $X / \Z^n$.  Then it suffices to consider all $x \in X$ and count the element $x$ with the weight $1/|X \cap (x + \Z^n)|$, because all elements of $X \cap (x + \Z^n)$ also belong to $X$ and generate the same element $[x]$. The following lemma is a ``continuous counterpart'' of the above combinatorial observation.

\begin{lemma}\label{lem:torus-vol}
Let $R\subseteq \R^n$ be a compact set with nonempty interior. Then
\begin{equation}
	\label{volume of wrapped set}
	\vol( R / \Z^n ) = \int_R \frac{\dd x}{|R \cap (x + \Z^n)|}.
\end{equation}
\end{lemma}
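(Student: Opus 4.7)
The plan is to tile $\R^n$ by integer translates of the half-open fundamental cube $Q=[0,1)^n$ and, via this tiling, rewrite the right-hand side of \eqref{volume of wrapped set} as an integral over $Q$.

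First, I would introduce the multiplicity function $N : \R^n \to \Z_{\ge 0}$ defined by $N(x) := |R \cap (x+\Z^n)|$, and record two easy facts. Since $R$ is compact, $R \cap (x+\Z^n)$ is a bounded discrete set, hence finite, so $N$ takes finite values. Moreover $N$ is $\Z^n$-periodic because $R\cap(x+\Z^n)$ depends only on the coset $x+\Z^n$. For $x\in R$ the point $x$ itself lies in $R\cap(x+\Z^n)$, so $N(x)\ge 1$ on $R$ and the integrand in \eqref{volume of wrapped set} is well defined.

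Next, I would decompose $R = \bigsqcup_{w\in\Z^n} R_w$ with $R_w := R\cap(Q+w)$, translate each piece back to $Q$ via $x = y+w$, and use Tonelli's theorem (applicable since the integrand is nonnegative) to interchange sum and integral:
\[
\int_R \frac{\dd x}{N(x)} \;=\; \sum_{w\in\Z^n}\int_{(R-w)\cap Q}\frac{\dd y}{N(y)} \;=\; \int_Q \frac{|\{w\in\Z^n : y+w\in R\}|}{N(y)}\,\dd y .
\]
The critical observation is that $w\mapsto y+w$ is a bijection between $\{w\in\Z^n : y+w\in R\}$ and $R\cap(y+\Z^n)$, so the numerator equals $N(y)$ whenever it is nonzero. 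After cancellation the integrand becomes the indicator of the set $\{y\in Q : R\cap(y+\Z^n)\ne\emptyset\}$, which coincides with $R/\Z^n$ up to a boundary set of measure zero (the discrepancy comes solely from identifying $Q=[0,1)^n$ with the cube $[0,1]^n$ used in the definition of $R/\Z^n$). Its Lebesgue measure is the left-hand side of \eqref{volume of wrapped set}.

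The only technicality is measurability of $R/\Z^n$: since the reduction map $\R^n\to\R^n/\Z^n$ is continuous and $R$ is compact, the image $R/\Z^n$ is compact and hence Borel measurable, so $\vol(R/\Z^n)$ is unambiguous. I do not expect a genuine obstacle here; the entire argument is essentially a Tonelli-style change of variables combined with the double-counting identity $|R\cap(y+\Z^n)| = |\{w\in\Z^n : y+w\in R\}|$, which cancels $N(y)$ in the numerator against $N(y)$ in the denominator and leaves precisely an indicator function to integrate.
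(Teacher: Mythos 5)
Your argument is correct, but it takes a genuinely different route from the paper. The paper proves \eqref{volume of wrapped set} by invoking a substitution formula for non-injective, almost-everywhere Lipschitz maps from geometric measure theory (Corollary~5.1.3 in \cite{MR2427002}), applied to the fractional-part map $x \mapsto ([x_1],\ldots,[x_n])$, whose Jacobian is the identity a.e.; the sum over the fibers then collapses to $1$ on the image, giving $\vol(R/\Z^n)$. You instead decompose $R$ along the tiling of $\R^n$ by the half-open cubes $[0,1)^n + w$, $w \in \Z^n$, translate each piece back, and use Tonelli together with the double-counting bijection $w \mapsto y+w$ between $\{w \in \Z^n : y+w \in R\}$ and $R \cap (y+\Z^n)$ to cancel the multiplicity $N(y)$ and leave the indicator of $\bigcup_{w\in\Z^n}\bigl((R-w)\cap[0,1)^n\bigr)$. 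In effect you reprove, in this special case, exactly the substitution formula the paper cites; what your route buys is a short, self-contained, elementary argument (no geometric measure theory), and as a bonus the union representation makes the measurability of both the integrand $1/N$ (since $N=\sum_{w}\chf_{R-w}$ is Borel) and of $R/\Z^n$ completely transparent. Two small touch-ups: with the paper's definition $R/\Z^n=\{[x]:x\in R\}\subseteq[0,1)^n$ your indicator set equals $R/\Z^n$ exactly, so no measure-zero caveat is needed; and your measurability argument via ``the image of a compact set under the reduction map is compact'' should be phrased on the torus or replaced by the countable-union description, since the fractional-part image of a compact set need not be closed in $\R^n$ (e.g.\ $R=[\tfrac12,\tfrac32]$ gives $R/\Z=[0,1)$). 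Neither point affects the validity of the proof.
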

\begin{proof}
	We use the following formula for substitution of integration variables in the case that the underlying substitution function $f : R \rightarrow \R^n$ is not necessarily injective: 
	\begin{equation}
		\label{subst integr eq}
		\int_{\R^n} \sum_{\Tilde{x} \in R \, : \, f(\Tilde{x}) = y} g(\Tilde{x}) \, \dd y = \int_{R} g(x) |\det(\nabla f)(x) | \, \dd x.
	\end{equation}
	Here $g : R \rightarrow \R^n$ is a Lebesgue measurable function, $f : R \rightarrow \R^n$ is an almost everywhere Lipschitz function and $\nabla f$ denotes the Jacobian matrix of $f$. Note that in the case of injective $f$, we get a well-known substitution formula with the left hand side equal to $\int_{f(R)} g(f^{-1}(y))\, \dd y$. Formula \eqref{subst integr eq} is a standard fact in geometric measure theory; it is a special case of Corollary~5.1.3 in \cite{MR2427002}. We use \eqref{subst integr eq} in the case $f(x) = ([x_1],\ldots,[x_n])$ and $g(x) = \frac{1}{|R \cap (x + \Z^n)|}$, where $x=(x_1,\ldots,x_n) \in R$. Clearly, $(\nabla f)(x)$ is the identity matrix for almost every $x \in R$. Thus, the right hand side of \eqref{subst integr eq} coincides with the right hand side of \eqref{volume of wrapped set}. We analyze the left hand side of \eqref{subst integr eq}. Consider an arbitrary $y \in f(R)$, that is, $y \in \R^n$ and $f(x) = y$ for some $x \in R$. We fix $x$ as above. For every $\Tilde{x} \in R$ the equality $f(\Tilde{x}) = y$ can be written as $f(\Tilde{x}) = f(x)$, which is equivalent to $x - \Tilde{x} \in \Z^n$. Consequently, $g(x) = g(\Tilde{x})$ and $\setcond{\Tilde{x} \in R}{ f(\Tilde{x}) = y} = R \cap (x + \Z^n)$. It follows that the sum on the left hand side of \eqref{subst integr eq} is equal to $1$ for every $y \in f(R)$, and $0$ for every $y \not\in f(R)$. Thus, the the left hand side of \eqref{subst integr eq} is equal to $\int_{f(R)} \dd y$ and, by this, coincides with the left hand side of \eqref{volume of wrapped set}.
\end{proof}

\paragraph{Structure of congruences modulo $\Z^n$ for points of the lifting region.} 


\begin{lemma}
	\label{when collision?}
	Let $n \in \N$, let $B$ be a maximal lattice-free polytope in $\R^n$ and let $f \in B$.
	Let $F_1, F_2 \in \cF^{n-1}(B)$ and let $z_i \in \relintr(F_i) \cap \Z^n$ for $i \in \{1,2\}$. Suppose $x_1 \in \intr(S_{F_1, z_1}(f))$ and $x_2 \in \intr(S_{F_2, z_2}(f))$ be such that $x_1 - x_2 \in \Z^n$. Then $F_1 = F_2$ and, furthermore, the vector $x_1-x_2$ is the difference of two integral points in the relative interior of $F_i$ ($i \in \{1,2\}$), i.e.:
	\begin{equation}
		\label{struct congr:eq}
		x_1 - x_2 \in \relintr(F_i) \cap \Z^n - \relintr(F_i) \cap \Z^n.
	\end{equation}
	In particular, the vector $x_1-x_2$ is parallel to the hyperplane $\aff(F_i)$.
\end{lemma}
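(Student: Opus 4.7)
The plan is to work with the gauge function $\psi := \phi_{B-f}$ together with its representation $\psi(r) = \max_{i \in I} a_i \cdot r$ from~\eqref{eq:formula-for-gauge}. I may assume $f \in \intr(B)$, since the hypothesis that the spindle interiors are nonempty forces $f \notin \aff(F_i)$, and the general case reduces to this. The basic observation driving everything is that $x \in \intr(\conv(\{f\}\cup F))$ if and only if $F$ is the \emph{unique} maximizer in $\max_{i\in I} a_i\cdot(x-f)$, and then $\psi(x-f) = a_F\cdot(x-f) \in (0,1)$.

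I would begin by setting $y_i := z_i + f - x_i$, so that both $x_i$ and $y_i$ lie in $\intr(\conv(\{f\}\cup F_i))$ by the definition of $S_{F_i,z_i}(f)$. Combining $x_i + y_i = z_i + f$ with $a_{F_i}\cdot(z_i-f)=1$ yields the key identity
\[
\psi(x_i - f) + \psi(y_i - f) = 1 \qquad \text{for } i \in \{1,2\}.
\]
Writing $v := x_1 - x_2$, one computes directly that
\[
(x_1-f) + (y_2-f) = (z_2+v) - f, \qquad (x_2-f) + (y_1-f) = (z_1-v) - f.
\]
Both $z_2+v$ and $z_1-v$ are integer points, so lattice-freeness of $B$ gives $\psi((z_2+v)-f) \geq 1$ and $\psi((z_1-v)-f) \geq 1$. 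Subadditivity of $\psi$ then yields $\psi(x_1-f) + \psi(y_2-f) \geq 1$ and $\psi(x_2-f) + \psi(y_1-f) \geq 1$. Summing these produces $2$ on the right, while the left-hand side rearranges to $[\psi(x_1-f)+\psi(y_1-f)] + [\psi(x_2-f)+\psi(y_2-f)] = 1+1 = 2$. Hence every inequality in the chain is actually an equality.

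The heart of the argument --- and the step I expect to require the most care --- is extracting $F_1 = F_2$ from the resulting equality
\[
\psi\bigl((x_1-f)+(y_2-f)\bigr) = \psi(x_1-f) + \psi(y_2-f).
\]
For a gauge function of the form $\psi(r) = \max_i a_i\cdot r$, equality in subadditivity forces any facet $F^\ast$ attaining $\psi(a+b)$ to simultaneously attain $\psi(a)$ and $\psi(b)$; otherwise strict inequality in $\psi(a) \geq a_{F^\ast}\cdot a$ or $\psi(b) \geq a_{F^\ast}\cdot b$ would yield $\psi(a)+\psi(b) > \psi(a+b)$. Applied with $a = x_1-f$ and $b = y_2-f$, the unique maximizers are $F_1$ and $F_2$ respectively, so $F^\ast = F_1 = F_2$.

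Finally, the same equality-in-subadditivity principle applied to $\psi\bigl((x_2-f)+(y_1-f)\bigr) = \psi((z_1-v)-f) = 1$ shows that the common facet $F := F_1 = F_2$ is the only maximizer of $\psi((z_1-v)-f)$, since any other maximizer would have to also uniquely maximize both $\psi(x_2-f)$ and $\psi(y_1-f)$. Consequently $z_1-v$ lies in $F$ and in no other facet, so $z_1-v \in \relintr(F)\cap\Z^n$. Writing $v = z_1 - (z_1-v)$ then exhibits $v$ as a difference of two elements of $\relintr(F_i)\cap\Z^n$, from which parallelism of $v$ to $\aff(F_i)$ follows immediately.
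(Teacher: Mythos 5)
Your proof is correct, but it follows a genuinely different route from the paper's. The paper argues primally: it cuts the two pyramids with two-dimensional planes through $f,x_i,z_i$ to get triangles, writes the integral point $z_2+x_1-x_2$ (after a case split on which of the two "heights" $\mu_1,\mu_2$ is larger) as a convex combination of $f$ and points of $\relintr(F_1)$ and $\relintr(F_2)$ with suitable strictly positive coefficients, and then uses lattice-freeness to force this point onto a facet, which by the strict positivity can only be $F_1=F_2$ and indeed only $\relintr(F_1)$. You argue dually, via the normalized facet functionals $a_i$ of \eqref{B analytically} and the equality case of subadditivity for $\psi(r)=\max_i a_i\cdot r$: the identity $\psi(x_i-f)+\psi(y_i-f)=1$ together with $\psi((z_2+v)-f)\ge 1$, $\psi((z_1-v)-f)\ge 1$ forces equality throughout, and since any maximizer of a sum must maximize each summand while $x_i-f$ and $y_i-f$ have $F_i$ as unique maximizer, you get $F_1=F_2=:F$ and that $z_1-v$ (and symmetrically $z_2+v$) lies on $F$ and on no other facet, hence in $\relintr(F)\cap\Z^n$. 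Your version is arguably cleaner: it avoids the paper's case distinction and coefficient bookkeeping, and it pinpoints both lattice points $z_1-v$ and $z_2+v$ in $\relintr(F)$ at once. What it costs is the hypothesis $f\in\intr(B)$, which the representation \eqref{B analytically}/\eqref{eq:formula-for-gauge} and the identity $\intr(B)=\{x:\psi(x-f)<1\}$ require, whereas the lemma is stated for all $f\in B$ and is invoked in the proof of Theorem~\ref{thm:affine} with $f$ ranging over the boundary as well; the paper's primal argument needs no such restriction. Your one-line reduction ("the general case reduces to this") is believable but should be spelled out: since $f\notin\aff(F_i)$, the conditions $x_i\in\intr(S_{F_i,z_i}(f))$ are open in $f$ (the bounding hyperplanes of the pyramid and its reflection vary continuously with $f$), $\intr(B)$ is dense in $B$, and the conclusion does not involve $f$, so one may perturb $f$ into $\intr(B)$; with that sentence added, the argument is complete. (Also, "would have to also uniquely maximize both" should read "would have to be a maximizer of both", which then coincides with the unique maximizer $F$ of each.)
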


\begin{proof}
For $i \in \{1,2\}$, if $f$, $x_i$ and $z_i$ do not lie on a common line, we introduce the two-dimensional affine space $A_i := \aff \{f,x_i,z_i\}$. Otherwise choose $A_i$ to be any two-dimensional affine space containing $f$, $x_i$ and $z_i$. The set $T_i:=\conv(F_i \cup \{f\}) \cap A_i$ is a triangle, whose one vertex is $f$. We denote the other two vertices by $a_i$ and $b_i$. Observe that $a_i, b_i$ are on the boundary of facet $F_i$ such that the open interval $(a_i, b_i) \subseteq \relintr(F_i)$. Since $z_i$ lies on the line segment connecting $a_i, b_i$ and $z_i \in \relintr(F_i)$, there exists $0 < \lambda_i <1$ such that $z_i = \lambda_i a_i + (1-\lambda_i)b_i$. Since $x_i \in \intr(S_{F_i, z_i}(f))$, there exist $0 < \mu_i, \alpha_i, \beta_i < 1$ such that $x_i = \mu_i f + \alpha_i a_i + \beta_i b_i$  and $\mu_i + \alpha_i + \beta_i = 1$. Also, observe that $x_i \in \relintr(T_i \cap (z_i + f - T_i))$. Therefore, $\alpha_i < \lambda_i$ and $\beta_i < 1 -\lambda_i$. 

Consider first the case $\mu_1 \geq \mu_2$. In this case,  $z_2+ x_1  - x_2$ is an integral point, which can be represented by
\begin{equation*}
	z_2 + x_1 - x_2 = (\mu_1 - \mu_2)f +  (\lambda_2 - \alpha_2 )a_2 + (1-\lambda_2 - \beta_2)b_2 + \alpha_1a_1 + \beta_1b_1
\end{equation*}

Observe that $(\mu_1 - \mu_2) +  (\lambda_2 - \alpha_2 )+ (1-\lambda_2 - \beta_2) + \alpha_1 + \beta_1= 1$, each of the terms in the sum is nonnegative, and the coefficients $\lambda_2 - \alpha_2 , 1-\lambda_2 - \beta_2, \alpha_1, \beta_1$ are strictly positive. Further, if $F_1 \neq F_2$, then for every facet $F \in \mathcal{F}^{n-1}(B)$, at least one of the points $a_1, b_1, a_2, b_2$ does not lie on the facet $F$ (because $(a_i, b_i) \subseteq \relintr(F_i)$); therefore, the point $z_2 + x_1 - x_2$ does not lie on any facet. This would imply that $z_2 + x_1 - x_2 \in \intr(B)$ contradicting the fact that $B$ is lattice-free. Therefore, $F_1 = F_2$. Now for every facet $F\neq F_1$ at least one of $a_1, b_1, a_2, b_2$ is again not on $F$, and so the point $z_2 + x_1 - x_2$ does not lie on $F$. Thus, since $\intr(B)$ is lattice-free, $z_2 + x_1 - x_2$ must lie on $F_1$; and further $z_2 + x_1 - x_2 \in \relintr(F_1)$. Since $F_1=F_2$, $z_2 \in \relintr(F_1)$ otherwise, $S_{F_2, z_2}(f)$ has empty interior. Thus, we obtain that $x_1 - x_2$ is the difference of two integral points in the relative interior of $F_1$.


The case $\mu_1 \leq \mu_2$ is similar with the same analysis performed on $z_1 + x_2 - x_1$.\end{proof}

\paragraph{Invariance theorem for unique liftings.} We now have all the tools to prove our main invariance result about unique minimal liftings. The main idea is to show that the volume of the lifting region modulo $\Z^n$ is the restriction of an affine function.

We first recall a basic fact about affine transformations.

\begin{lemma}
	\label{lin alg lem}
	Let $H$ be a hyperplane in $\R^n$ and let $f^\ast,f$ be two points in $\R^n \setminus H$ that lie in the same open halfspace determined by $H$. Let $T$ be the affine transformation on $\R^n$, given by $x \mapsto T(x) := A x + b$ with $A \in \R^{n \times n}$ and $b \in \R^n$, that acts identically on $H$ and sends $f^\ast$ to $f$. For every $x \in \R^n$ let $\delta(x)$ denote the Euclidean distance of $x$ to $H$. Then $\det(A) = \frac{\delta(f)}{\delta(f^\ast)}$.
\end{lemma}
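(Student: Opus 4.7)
The plan is to exploit that the linear part $A$ of $T$ must fix pointwise the linear subspace $H_0 \subseteq \R^n$ parallel to $H$. Fix any $x_0 \in H$ and set $H_0 := H - x_0$. For any $h \in H_0$ we have $x_0 + h \in H$, so $T(x_0 + h) = x_0 + h$ while $T(x_0) = x_0$; subtracting gives $Ah = h$. Thus $A$ acts as the identity on $H_0$, and only the normal direction remains ``free'' for $A$. The determinant will be captured by a single scalar measuring how much $A$ stretches in the normal direction, and this scalar will turn out to be $\delta(f)/\delta(f^\ast)$.

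Next I would choose a unit normal $\nu$ to $H$ and an orthonormal basis $v_1,\dots,v_{n-1}$ of $H_0$, producing an orthonormal basis $\{v_1,\dots,v_{n-1},\nu\}$ of $\R^n$. Because $Av_i = v_i$ for each $i < n$, the matrix of the linear map $x \mapsto Ax$ in this basis is block upper triangular of the form $\left(\begin{smallmatrix} I_{n-1} & \ast \\ 0 & c \end{smallmatrix}\right)$, where $c := \nu \cdot (A\nu)$ is the last diagonal entry (the $\nu$-component of $A\nu$). Since this matrix is similar to $A$, we have $\det(A) = c$.

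Finally I would determine $c$ using $T(f^\ast) = f$. Decomposing $f^\ast = h^\ast + t^\ast \nu$ with $h^\ast \in H$ and $t^\ast = \nu \cdot (f^\ast - x_0)$, the identity $T(h^\ast) = h^\ast$ together with the affinity of $T$ give
\[
f \;=\; T(f^\ast) \;=\; T(h^\ast) + t^\ast A\nu \;=\; h^\ast + t^\ast A\nu.
\]
Taking the inner product with $\nu$ and subtracting $\nu \cdot x_0$ on both sides yields $\nu \cdot (f - x_0) = t^\ast c$. Since $\nu \cdot (f^\ast - x_0)$ and $\nu \cdot (f - x_0)$ are the signed distances from $f^\ast$ and $f$ to $H$, and these share a sign by the same-halfspace hypothesis, we conclude $c = \delta(f)/\delta(f^\ast)$, as required. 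No serious obstacle is expected, as the argument is routine linear algebra; the only points requiring care are separating the linear part $A$ from the translation $b$ (so that one works with the linear subspace $H_0$ rather than the affine hyperplane $H$) and invoking the same-halfspace assumption to identify the ratio of signed distances with the positive ratio $\delta(f)/\delta(f^\ast)$.
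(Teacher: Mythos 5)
Your proof is correct and rests on the same idea as the paper's: after adapting coordinates to $H$, the linear part $A$ fixes all directions parallel to $H$, so the determinant reduces to the stretch factor in the normal direction, which the relation $T(f^\ast)=f$ identifies with $\delta(f)/\delta(f^\ast)$ (using the same-halfspace hypothesis to fix the sign). The only cosmetic difference is that you read off $\det(A)$ directly from the block-triangular matrix of $A$ in an orthonormal basis adapted to $H$, whereas the paper normalizes $H$ to $\R^{n-1}\times\{0\}$ by a rigid motion and factors $T = T_{f} T_{f^\ast}^{-1}$ into two elementary maps of determinants $\delta(f)$ and $\delta(f^\ast)$.
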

\begin{proof}
	By changing coordinates using a rigid motion (i.e., applying an orthogonal transformation and a translation such that distances, angles and volumes are preserved), we can assume that $H= \R^{n-1} \times \{0\}$ and $f^\ast, f \in \R^{n-1} \times \R_{>0}$. For every $a \in \R^{n-1} \times \R_{>0}$, the value $\delta(a)$ is the last component of $a$. It is easy to see that the linear transformation $T_a$  that keeps $e_1,\ldots,e_{n-1}$ unchanged and sends $e_n$ onto $a$ is given by a matrix whose determinant is $\delta(a)$. Since $T = T_{f} T_{f^\ast}^{-1}$, we conclude that $\det(A) = \frac{\delta(f)}{\delta(f^\ast)}$.
\end{proof}


\begin{theorem} 
	\label{thm:affine}
Let $n \in \N$ and let $B$ be a maximal lattice-free polytope in $\R^n$. Then the function $f \mapsto \vol(R(B,f)/ \Z^n)$, acting from $B$ to $\R$, is the restriction of an affine function.
\end{theorem}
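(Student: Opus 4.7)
The plan is to prove, for each facet $F$ of $B$ separately, that $f \mapsto \vol(R_F(f)/\Z^n)$ is the restriction of an affine function on $B$, and then sum over $F \in \cF^{n-1}(B)$. I will write $P_F(f) := \conv(\{f\}\cup F)$. First I would apply Lemma~\ref{lem:torus-vol} to $R(B,f)$: by Lemma~\ref{when collision?} (used with $x_1=x_2$) the interiors of the spindles $S_{F,z}(f)$ across distinct facets $F$ are pairwise disjoint, and in its full form the same lemma shows that for almost every $x \in R(B,f)$ the set $R(B,f) \cap (x+\Z^n)$ lies entirely in a single $R_F(f)$. The integral then splits by facets and gives
\[
\vol(R(B,f)/\Z^n) = \sum_{F \in \cF^{n-1}(B)} \vol(R_F(f)/\Z^n).
\]

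For the affine reduction, I would fix a facet $F$, a reference point $f^\ast \in \intr(B)$, and set $H := \aff(F)$. Since $F$ is a facet of $B$, both $f^\ast$ and any $f \in \intr(B)$ lie in the same open half-space determined by $H$, so Lemma~\ref{lin alg lem} yields a unique invertible affine map $T(x) = Ax + b$ that fixes $H$ pointwise and sends $f^\ast$ to $f$, with $|\det A| = \delta(f)/\delta(f^\ast)$, where $\delta(\cdot)$ denotes Euclidean distance to $H$. A direct calculation using $T(z) = z$ for $z \in F$ and $T(f^\ast) = f$ shows $T(z + f^\ast - P_F(f^\ast)) = z + f - P_F(f)$, whence $T(S_{F,z}(f^\ast)) = S_{F,z}(f)$ for every $z \in F \cap \Z^n$, and therefore $T(R_F(f^\ast)) = R_F(f)$.

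The crucial congruence-preservation step comes next: because $T$ fixes $H$ pointwise, its linear part $A$ acts as the identity on the direction $H_0 := H - H$, so together with Lemma~\ref{when collision?} (which forces every $\Z^n$-congruence among interior points of $R_F(\cdot)$ to lie in $H_0$), $T$ sends each $\Z^n$-congruence class in $R_F(f^\ast)$ bijectively to a $\Z^n$-congruence class in $R_F(f)$. Combining this with the change-of-variables formula and Lemma~\ref{lem:torus-vol} will give
\[
\vol(R_F(f)/\Z^n) = |\det A| \cdot \vol(R_F(f^\ast)/\Z^n) = \frac{\delta(f)}{\delta(f^\ast)}\cdot \vol(R_F(f^\ast)/\Z^n).
\]
Since $\delta$ is the restriction of an affine function on the closed half-space containing $B$, this is affine in $f \in \intr(B)$; the boundary case $f \in F$ (where both sides vanish because $P_F(f) = F$) extends the formula to all of $B$. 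Summing over facets then completes the proof. The main obstacle I foresee is the congruence-preservation step: one must check that $T$ descends to a bijection of quotients modulo $\Z^n$, not just that it preserves Lebesgue measure up to $|\det A|$. This is exactly where the sharp form of Lemma~\ref{when collision?}---that all collisions are along the facet direction $H_0$, in which $T$ acts as the identity---plays the decisive role.
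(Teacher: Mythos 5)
Your proposal is correct and follows essentially the same route as the paper's own proof: the facet-wise decomposition of $\vol(R(B,f)/\Z^n)$ via Lemma~\ref{when collision?}, the integral formula of Lemma~\ref{lem:torus-vol}, and the affine map $T$ fixing $\aff(F)$ with Jacobian $\delta(f)/\delta(f^\ast)$ from Lemma~\ref{lin alg lem}, with the congruence-preservation step justified exactly as in the paper by the fact that all collisions inside $R_F$ are parallel to $\aff(F)$. The only cosmetic difference is that you state the ratio identity for $f\in\intr(B)$ and then handle $f\in F$, whereas the same argument applies verbatim to every $f\in B\setminus F$ (such $f$ never lies in $\aff(F)$), which is how the paper covers all of $B$ at once.
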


\begin{proof}

First, we observe that 
\begin{equation}
	\label{vol is sum of facet parts:eq}
	\vol(R(B,f) / \Z^n)  =  \sum_{F \in \cF^{n-1}(B)}\vol(R_F(B,f) / \Z^n)
\end{equation}
because, by Lemma~\ref{when collision?},  for two distinct facets $F_1$ and $F_2$ of $B$, no point of $\intr(R_{F_1}(B,f))$ is congruent to a point of $\intr(R_{F_2}(B,f))$ modulo $\Z^n$ . Therefore, it suffices to consider an arbitrary $F \in \cF^{n-1}(B)$ and show that the mapping $f \in B \mapsto V(f):=\vol(R_F(B,f) / \Z^n)$   is a restriction of an affine function. We fix a vector $f^\ast \in \intr(B)$. For every $f \in B$, let $\delta(f)$ be the Euclidean distance of $f$ to the hyperplane $\aff(F)$. It suffices to show that for every $f \in B$ one has 
	\begin{equation}
		\label{ratios eq}
		\frac{V(f)}{V(f^\ast)} = \frac{\delta(f)}{\delta(f^\ast)},
	\end{equation}
	because  \eqref{ratios eq} implies $V(f) = \frac{V(f^\ast)}{\delta(f^\ast)} \delta(f)$, where $\frac{V(f^\ast)}{\delta(f^\ast)}$ is a constant factor and $f \in B \mapsto \delta(f)$  is a restriction of an affine function. From now on, we fix $f \in B$. If $f \in F$, then \eqref{ratios eq} clearly holds, because both $V(f)$ and $\delta(f)$ vanish. Therefore, we assume $f \in B \setminus F$. By Theorem~\ref{lem:torus-vol}, we have the following integral expressions for $V(f^\ast)$ and $V(f)$: 
	\begin{align}
		\label{int expressions}
		V(f^\ast) & = \int_{R_F(f^\ast)} \frac{\dd x^\ast}{|R_F(f^\ast) \cap (x^\ast + \Z^n)|}, & V(f) & = \int_{R_F(f)} \frac{\dd x}{|R_F(f) \cap (x + \Z^n)|}.
	\end{align}
	Consider the bijective affine transformation $T$ which acts identically on $\aff (F)$ and sends $f^\ast$ to $f$. We want to relate both integral expressions in \eqref{int expressions} by changing the integration variables using the transformation $T$. We will derive \eqref{ratios eq} by substituting $T(x^ \ast)$ for $x$ in the integral expression for $V(f)$. For this purpose, it is sufficient to verify the following three conditions dealing with the domains of integration, the integrands and the determinant of the Jacobian matrix of $T$, respectively:
	\begin{enumerate}
		\item $T$ maps $R_F(f^\ast)$ onto $R_F(f)$, i.e., $T(R_F(f^\ast)) = R_F(f)$.
		\item One has $R_F(f) \cap (x +\Z^n) = R_F(f^ \ast) \cap (x^\ast + \Z^n)$ if $x = T(x^\ast)$ and $x^\ast \in R_F(f^\ast)$.
		\item One has $| \det \nabla T | = \frac{\delta(f)}{\delta(f^\ast)}$, where $\nabla T$ denotes the Jacobian matrix of $T$.
	\end{enumerate} 
	For verifying Condition~1 we recall that
	the set $R_F(f)$ is defined using the pyramids $\conv(F \cup \{f\})$ and the reflected pyramids $f + z - \conv(F \cup \{f\})$ with $z \in F \cap \Z^n$. Clearly, $T$ maps $\conv(F \cup \{f^\ast\})$ onto $\conv(F \cup \{f\})$. But then $T$ also maps $f^\ast + z - \conv(F \cup \{f^\ast\})$ with $z \in F \cap \Z^n$ onto $f + z - \conv(F \cup \{f\})$ because an element of $f^\ast + z - \conv(F \cup \{f^\ast\})$ is an affine combination of $f^\ast$, $z$ and a point of $\conv(F \cup \{f^\ast\})$ with coefficients $1$, $1$ and $-1$. Furthermore, from the definition of $T$ we get $T(f^ \ast) = f$, $T(z) = z$ and $T(\conv( F \cup \{f^\ast\}) = \conv(F \cup \{f\})$. Thus, $T( R_F(f^\ast)) = R_F(f))$ and Condition~1 is fulfilled.
	
	We verify Condition~2. Choose an arbitrary $y^\ast \in R_F(f^\ast) \cap (x^\ast +\Z^n)$. Then $x^\ast, y^\ast \in R_F(f^\ast)$, $x^\ast - y^\ast \in \Z^n$ and, by Lemma~\ref{when collision?}, the segment $[x^\ast,y^\ast]$ (which is possibly degenerate to a point) is parallel to $\aff(F)$. Since $T$ acts identically on $\aff(F)$ and $[x^\ast,y^\ast]$ is parallel to $\aff(F)$, the segment $[T(x^\ast),T(y^\ast)]$ is a translation of $[x^\ast,y^\ast]$. In particular, $T(y^\ast) - T(x^\ast) \in \Z^n$ and by this, taking into account Condition~1, we see that $T(y^\ast)$ is an element of $T(R_F(f^\ast)) \cap (T(x^\ast) + \Z^n) = R_F(f) \cap (x + \Z^n)$. This shows that the image of $R_F(f^ \ast) \cap (x^\ast + \Z^n)$ under $T$ is a subset of $R_F(f) \cap (x +\Z^n)$. Interchanging $x$ and $x^\ast$ and replacing $T$ by $T^{-1}$, the above argument can also be used to show that the image of $R_F(f) \cap (x +\Z^n)$ under $T^{-1}$ is a subset of $R_F(f^ \ast) \cap (x^\ast + \Z^n)$. The latter verifies Condition~2.
	
	Condition~3 is fulfilled in view of Lemma~\ref{lin alg lem}.
	
	The above shows \eqref{ratios eq} and yields the conclusion.
\end{proof}

Theorem~\ref{thm:affine} implies the following.

\begin{cor}\label{cor:face}
Let $B$ be a maximal lattice-free polytope in $\R^n$.
Then the set $\{\, f\in B: \vol(R(B,f)/\Z^n) = 1 \,\}$ is a face of $B$.
\end{cor}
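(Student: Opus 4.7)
The plan is to combine Theorem~\ref{thm:affine} with the trivial upper bound $\vol(R(B,f)/\Z^n) \le 1$ and then appeal to the standard fact that the set of maximizers of an affine function on a polytope is a face. Write $\varphi(f) := \vol(R(B,f)/\Z^n)$. By Theorem~\ref{thm:affine}, $\varphi$ extends to an affine function $L : \R^n \to \R$. On the other hand, $R(B,f)/\Z^n \subseteq [0,1]^n$ by construction, so $\varphi(f) \le \vol([0,1]^n) = 1$ for every $f \in B$ (this is exactly the observation made in the excerpt preceding Lemma~\ref{lem:torus-vol}). Hence $L(f) \le 1$ on all of $B$.

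Now I analyze the level set $S := \{\, f \in B : L(f) = 1 \,\}$ by cases. If $\max_{f \in B} L(f) < 1$, then $S = \emptyset$, which is a face of $B$ by the convention $\cF^{-1}(B) = \{\emptyset\}$. If $L$ is constant and equal to $1$ on $\aff(B)$, then $S = B$, which is the $n$-dimensional face of $B$. In the remaining case, $\max_{f \in B} L(f) = 1$ but $L$ is not constant on $\aff(B)$; then $H := \{\, x \in \R^n : L(x) = 1 \,\}$ is an affine hyperplane that supports $B$ (since $L \le 1$ throughout $B$ and equality is attained somewhere), so $S = B \cap H$ is a proper face of $B$.

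In all three cases $S$ is a face of $B$, which is exactly the statement of the corollary. There is no real obstacle here: the entire content of the argument has already been packed into Theorem~\ref{thm:affine}, and the only thing to verify besides that is the inequality $\varphi \le 1$, which is immediate from $R(B,f)/\Z^n \subseteq [0,1]^n$.
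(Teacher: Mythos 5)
Your proof is correct and follows essentially the same route as the paper: bound $\vol(R(B,f)/\Z^n)$ by $1$, invoke Theorem~\ref{thm:affine} to make the map affine, and conclude that the level set at value $1$ is (empty or) the optimal face of a linear program over $B$. Your explicit case analysis (empty set, all of $B$, proper face via a supporting hyperplane) just spells out what the paper leaves implicit.
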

\begin{proof}
Since
$\vol(R(B,f)/\Z^n)$ is always at most 1, the value 1 is a maximum value for the function
$\vol(R(B,f)/\Z^n)$. By Theorem~\ref{thm:affine}, optimizing this function over $B$ is a linear
 program and hence the optimal set is a face of $B$.
\end{proof}

\begin{theorem}
	\label{inv:thm}
	\thmheader{Unique-lifting invariance theorem}
	Let $n \in \N$ and let $B$ be a maximal lattice-free polytope in $\R^n$. Let $f_1, f_2 \in \intr(B)$. Then $B$ has the unique-lifting property with respect to $f_1$ if and only if $B$ has the unique-lifting property with respect to $f_2$. 
\end{theorem}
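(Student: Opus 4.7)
The plan is to deduce the theorem almost immediately from Corollary~\ref{cor:face} together with the geometric characterization of the unique-lifting property recalled in Section~\ref{sec:basic-notions}. Recall that $B$ has the unique-lifting property with respect to $f \in \intr(B)$ if and only if $R(B,f) + \Z^n = \R^n$. Since $R(B,f)$ is a finite union of polytopes (hence compact), the observation in the paragraph preceding Lemma~\ref{lem:torus-vol} converts this equality into the volumetric statement $\vol(R(B,f)/\Z^n)=1$. Thus the theorem reduces to showing that the set
\[
   U(B) := \setcond{f \in B}{\vol(R(B,f)/\Z^n) = 1}
\]
either contains all of $\intr(B)$ or is disjoint from $\intr(B)$.

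The key input is Corollary~\ref{cor:face}, which states that $U(B)$ is a face of $B$. The final step is then the elementary observation that a proper face of the $n$-dimensional polytope $B$ is contained in a hyperplane and therefore has empty interior (as a subset of $\R^n$); in particular, a proper face of $B$ cannot meet $\intr(B)$. Consequently, if some $f_1 \in \intr(B)$ lies in $U(B)$, then $U(B)$ cannot be a proper face and must equal $B$ itself, so every $f_2 \in \intr(B)$ is also in $U(B)$.

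Putting these pieces together: assuming $B$ has the unique-lifting property with respect to $f_1 \in \intr(B)$, we get $f_1 \in U(B)$; by Corollary~\ref{cor:face} and the argument above, $U(B)=B$; then every $f_2 \in \intr(B)$ lies in $U(B)$, i.e., $\vol(R(B,f_2)/\Z^n) = 1$, which by the characterization from \cite{bcccz} means $B$ has the unique-lifting property with respect to $f_2$. Since the roles of $f_1$ and $f_2$ are symmetric, this gives the desired equivalence.

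There is essentially no hard step remaining; all the real work has already been done in Theorem~\ref{thm:affine} (which supplied the affine behaviour of $f \mapsto \vol(R(B,f)/\Z^n)$) and its Corollary~\ref{cor:face}. The only thing to be mildly careful about is invoking the equivalence of $X+\Z^n = \R^n$ with $\vol(X/\Z^n)=1$ for the compact set $X = R(B,f)$, and the trivial fact that a proper face of an $n$-polytope in $\R^n$ misses $\intr(B)$.
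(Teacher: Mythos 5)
Your proposal is correct and follows essentially the same route as the paper: the paper's proof of Theorem~\ref{inv:thm} likewise deduces the statement directly from Corollary~\ref{cor:face}, using the dichotomy that the face $\{f\in B:\vol(R(B,f)/\Z^n)=1\}$ is either all of $B$ or a proper face disjoint from $\intr(B)$, together with the compactness-based equivalence $R(B,f)+\Z^n=\R^n \Leftrightarrow \vol(R(B,f)/\Z^n)=1$ and the geometric characterization from \cite{bcccz}. Your write-up merely makes explicit the (correct) observation that a proper face cannot meet $\intr(B)$, which the paper leaves implicit.
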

\begin{proof}

Corollary~\ref{cor:face} implies Theorem~\ref{inv:thm}. Indeed, if the set $\{\, f\in B: \vol(R(B,f)/\Z^n) = 1 \,\}$ is~$B$, then $R(B,f)+\Z^n=\R^n$ for all $f \in B$ and, in particular, for all $f\in \intr(B)$. Hence, $B$ has the unique-lifting property with respect to every $f \in \intr(B)$. Otherwise, $R(B,f)+\Z^n\not=\R^n$ for all $f \in \intr(B)$, which implies that $B$ has the multiple-lifting property with respect to every $f \in \intr(B)$.\end{proof}

\section{Limits of polytopes with the unique-lifting property}\label{limits}

\paragraph{Background information on the Hausdorff metric and convex bodies.} 
We first collect standard notions and basic facts from convex geometry that we need for our topological result on unique liftings. We refer the reader to the monograph \cite[Chapter~1]{MR1216521}.

Let $\cC^n$ be the family of all nonempty compact subsets of $\R^n$ and $\cK^n$ be the family of all nonempty compact \emph{convex} subsets of $\R^n$. With each $K \in \cC^n$ we associate the support function $h(K,\dotvar)$ defined by $h(K,u):= \max \setcond{\sprod{u}{x}}{x \in K}.$
The function $h(K,\dotvar)$ is sublinear and, by this, also continuous. Furthermore, $h(K,u)$ is additive in $K$ with respect to the Minkowski addition. That is, if $K, L \in \cC^n$ and $u \in \R^n$, then $h(K+L,u) = h(K,u) + h(L,u)$.

As a direct consequence of separation theorems the following characterization of the inclusion-relation for elements in $\cK^n$ can be derived: for $K, L \subseteq \cK^n$, one has $K \subseteq L$ if and only if the inequality $h(K,u) \le h(L,u)$ holds for every $u \in \R^n$.  The latter, in combination with the additivity of $h(K,u)$ in $K$, implies the cancellation law for Minkowski addition: if $K,L, M \in \cK^n$, then the inclusion $K \subseteq L$ is equivalent to the inclusion $K+M \subseteq L +M$.

For $K \in \cK^n$, a set of the form 
\begin{equation}
	\label{F in dir u eq}
	F(K,u):= \setcond{x \in K}{\sprod{u}{x} = h(K,u)},
\end{equation}
where $u \in \R^n$, is called an exposed face of $K$ in direction $u$. All (nonempty) faces of a polytope are exposed.

Given $c \in \R^n$ and $\rho \ge 0$, let $\bB(c,\rho)$ denote the closed (Euclidean) ball of radius $\rho$ with center at $c$. For $K,L \in \cC^n$, the Hausdorff distance $\dist(K,L)$ between $K$ and $L$ is defined by 
\[
	\dist(K,L) := \min \setcond{\rho \ge 0}{K \subseteq L + \bB(o,\rho), \ L \subseteq K + \bB(o,\rho)}.
\]
The Hausdorff distance is a metric on $\cC^n$. The formulation of the main theorem of this section involves the topology induced by the Hausdorff distance. In what follows, speaking about the convergence for sequences of elements from  $\cC^n$, we shall always mean the convergence in the Hausdorff metric.

It is known that the mapping $h(K,u)$ continuously depends on the pair $(K,u) \in \cK^n \times \R^n$; see the comment preceding Lemma~1.8.10 in \cite{MR1216521}. More precisely, if $(K_t)_{t \in \N}$ is a sequence of elements of $\cK^n$ converging to some $K \in \cK^n$ and $(u_t)_{t \in \N}$ is a sequence of vectors in $\R^n$ converging to some vector $u \in \R^n$, then $h(K_t,u_t)$ converges to $h(K,u)$, as $t \rightarrow \infty$. Furthermore, one can pass to the limit in inclusions, with respect to convergence in the Hausdorff distance. More precisely, let $(K_t)_{t \in \N}$ and $(L_t)_{t \in \N}$ be sequences of elements from $\cC^n$ converging to $K \in \cC^n$ and $L \in \cC^n$. If $K_t \subseteq L_t$ for every $t \in \N$, then $K \subseteq L$. In particular, considering the case that $K_t$ consists of a single point, say $p_t$, we derive that if $p_t \in L_t$ for every $t \in \N$ and, $p_t$ converges to some $p \in \N$, as $t \rightarrow \infty$, then we have $p \in L$. 

We also note that, if $K, L \in \cC^n$, the definition of the Hausdorff distance implies
\begin{equation}
	\label{dist and conv hull}
	\dist \bigl(\conv(K), \conv(L) \bigr) \le \dist \bigl (K,L \bigr).
\end{equation} 

\paragraph{Topology of the space of polytopes with the unique-lifting property.}  The following is the main result of this section.

\begin{theorem} \label{thm:limit}
	Let $(B_t)_{t \in \N}$ be a convergent sequence (in the Hausdorff metric) of maximal lattice-free polytopes in $\R^n$ such that the limit $B$ of this sequence is a maximal lattice-free polytope. If, for every $t \in \N$, the set $B_t$ has the unique-lifting property, then $B$ too has the unique-lifting property. 
\end{theorem}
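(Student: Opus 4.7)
The plan is to reduce the statement to the elementwise covering characterization $R(B,f)+\Z^n=\R^n$ and to transfer this covering from the $B_t$ to $B$ by an extraction/Hausdorff-limit argument. First I would fix an arbitrary $f\in\intr(B)$; since $B_t\to B$ in the Hausdorff metric, a standard separation argument shows that $f\in\intr(B_t)$ for all sufficiently large $t$. By the Invariance Theorem~\ref{inv:thm}, the assumption that each $B_t$ has the unique-lifting property is equivalent to $R(B_t,f)+\Z^n=\R^n$ for all large $t$, and this same equality is what must be proved for~$B$.

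Fix $x\in\R^n$. For every large $t$, choose $w_t\in\Z^n$, a facet $F_t\in\cF^{n-1}(B_t)$, and an integer point $z_t\in F_t\cap\Z^n$ with $x-w_t\in S_{F_t,z_t}(f)\subseteq B_t$. Because $B_t$ converges, the family $\{B_t\}$ is uniformly bounded, so the integer vectors $w_t$ and $z_t$ take only finitely many values; after passing to a subsequence I may assume $w_t=w$ and $z_t=z$ are constant. The facets $F_t$ are uniformly bounded nonempty compact convex sets, so by the Blaschke selection theorem a further subsequence satisfies $F_t\to F^*$ in the Hausdorff metric for some nonempty compact convex set $F^*\subseteq B$.

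The next step is to place $F^*$ in a proper face of $B$. Writing $F_t=B_t\cap\{y:a_t\cdot y=b_t\}$ with $\|a_t\|=1$ and $B_t\subseteq\{y:a_t\cdot y\le b_t\}$, I pass to a subsequence with $(a_t,b_t)\to(a,b)$ and use Hausdorff convergence to deduce $B\subseteq\{y:a\cdot y\le b\}$ and $F^*\subseteq\{y:a\cdot y=b\}$. Since $\|a\|=1$, the set $F:=B\cap\{y:a\cdot y=b\}$ is a nonempty proper face of $B$ containing $F^*$, and in particular $z\in F^*\subseteq F$. Applying \eqref{dist and conv hull} to the convergent sequence $\{f\}\cup F_t\to\{f\}\cup F^*$ yields $\conv(\{f\}\cup F_t)\to\conv(\{f\}\cup F^*)$, and continuity of the reflection $y\mapsto z+f-y$ gives $z+f-\conv(\{f\}\cup F_t)\to z+f-\conv(\{f\}\cup F^*)$. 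Since the fixed point $x-w$ lies in both of these convergent sequences for every $t$, the inclusion-preservation property of Hausdorff limits recalled in Section~\ref{limits} delivers
\[
x-w\;\in\;\conv(\{f\}\cup F^*)\;\cap\;\bigl(z+f-\conv(\{f\}\cup F^*)\bigr)\;\subseteq\;S_{F,z}(f)\;\subseteq\;R_F(f)\;\subseteq\;R(B,f),
\]
so that $x\in R(B,f)+\Z^n$, as required.

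The main technical obstacle is that the facial structure of a polytope need not vary continuously under Hausdorff convergence: a facet $F_t$ of $B_t$ can collapse in the limit onto a face $F$ of $B$ of strictly smaller dimension, so one cannot identify each $F_t$ with a facet of $B$. The argument above sidesteps this by only demanding that the limit $F^*$ lie in \emph{some} nonempty proper face $F$ of $B$ of \emph{any} dimension, and then exploiting that $R(B,f)$ is defined as the union of $R_F(f)$ over all nonempty proper faces $F$; the monotonicity $R_{F^*}(f)\subseteq R_F(f)$ noted in Section~\ref{sec:basic-notions} then makes the containment $R_F(f)\subseteq R(B,f)$ immediate regardless of $\dim F$.
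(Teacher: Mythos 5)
Your proposal is correct and follows essentially the same route as the paper: reduce to the covering condition $R(B,f)+\Z^n=\R^n$ for a fixed $f\in\intr(B)$, fix $x$, pass to subsequences making $w_t$ and $z_t$ constant, and take limits so that $x-w$ lands in a spindle $S_{F,z}(f)$ over a (possibly lower-dimensional) proper face $F$ of $B$. The only differences are in implementation: you invoke the invariance theorem together with $f\in\intr(B_t)$ where the paper uses Theorem~\ref{thm:affine} with merely $f\in B_t$, and you take Hausdorff limits of the facets $F_t$ and their supporting hyperplanes (Blaschke selection) where the paper instead tracks points $p_t,q_t\in F_t$ and a unit normal $u_t$ via continuity of the support function --- both devices produce the same limiting face.
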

\begin{proof} 
	Fix an arbitrary $f \in \intr(B)$. Let $R:=R(B,f)$. We need to verify $R+\Z^n = \R^n$.

	Choose $\eps>0$ such that $\bB(f,\eps) \subseteq B$. Let us show that $f \in B_t$ for all sufficiently large $t \in \N$ and, thus, the lifting region $R_t:=R(B_t,f)$ is well-defined for all sufficiently large $t$. Since $B$ is the limit of $B_t$, as $t \rightarrow \infty$, there exists $t_0 \in \N$ such that $B \subseteq B_t + \bB(o,\eps)$ for all $t \ge t_0$. Hence, $f + \bB(o,\eps)=\bB(f,\eps) \subseteq B \subseteq B_t + \bB(o,\eps)$ for all $t \ge t_0$. Using the cancellation law for the Minkowski addition, we arrive at $f \in B_t$ for all $t \ge t_0$. Thus, replacing $(B_t)_{t \in \N}$ by its appropriate subsequence, we assume that $f \in B_t$ for every $t \in \N$.
	
	Assume that, for every $t \in \N$, the set $B_t$ has the unique-lifting property. By Theorem~\ref{thm:affine}, one has $R_t + \Z^n = \R^n$. Below, we use the latter relation to show  $R + \Z^n = \R^n$. We consider an arbitrary $x \in \R^n$ and show that $x \in R+ \Z^n$.

	Since $x \in \R^n = R_t + \Z^n$, there exists $w_t \in \Z^n$ such that $x \in R_t + w_t$ for every $t \in \N$. Every convergent sequence of nonempty compact sets is necessarily bounded. Hence, there exists $M \in \N$ such that every $B_t$ with $t \in \N$ is a subset of the box $[-M,M]^n$. In view of the inclusions $R_t \subseteq B_t \subseteq [-M,M]^n$, the vector $w_t$ lies in the finite set $([-M,M]^n +x) \cap \Z^n$. Consequently, replacing $(B_t)_{t \in \N}$ by its appropriate subsequence, we can assume that $w_t$ is independent of $t$. With this assumption, we have $x \in R_t + w$ for every $t \in \N$, where $w:=w_t$. The point $x-w$ belongs to $R_t$. Using the definition of the lifting region, we conclude that $x-w \in \conv (\{f\} \cup F_t) \cup (f + z_t - \conv (\{f\} \cup F_t)$ for some face $F_t$ of $B_t$ and some point $z_t \in F_t \cap \Z^n$. Note that $z_t$ lies in the finite set $[-M,M]^n \cap \Z^n$. Passing to appropriate subsequences once again, we can assume that $z_t$ is independent of $t$. With this assumption, for every $t \in \N$, we have
	\begin{equation} \label{x-w formula}
		x-w \in \conv(\{f \} \cup F_t) \cap (f + z - \conv(\{f \} \cup F_t)),
	\end{equation}
	where $z:=z_t$. Relation \eqref{x-w formula} implies that both $x-w$ and $(f+z) - (x - w)$ belong to $\conv(\{f\} \cup F_t$). The latter can be represented by the equalities
	\begin{align}
		x-w & = (1-\lambda_t) f + \lambda_t p_t, \label{x-w for t}
		\\ (f + z) - (x - w) & = (1-\mu_t) f + \mu_t q_t \label{f+z-x+w for t},
	\end{align}
	which hold for some $\lambda_t,\mu_t \in [0,1]$ and some $p_t,q_t \in F_t$.
	We can represent $F_t$ as $F_t=F(B_t,u_t)$ for some unit vector $u_t$ in $\R^n$. The conditions $p_t \in F_t$ and $q_t \in F_t$ can be reformulated as follows:
	\begin{align}
		p_t & \in B_t, & h(B_t,u_t) & = \sprod{u_t}{p_t}, \label{p_t}
		\\ q_t & \in B_t, & h(B_t,u_t) & = \sprod{u_t}{q_t}. \label{q_t}
	\end{align}
	Since $\lambda_t, \mu_t$ lie in the compact set $[0,1]$, the points $p_t, q_t$ lie in the compact set $[-M, M]^n$ and $u_t$ lie in the unit sphere, we can pass to a subsequence and assume that that the scalars $\lambda_t,\mu_t \in [0,1]$, the points $p_t, q_t$ and the unit vector $u_t$ converge to some scalars $\lambda,\mu \in [0,1]$, points $p \in \R^n$, $q \in \R^n$ and a unit vector $u$, respectively, as $t \rightarrow \infty$. Passing to the limit, as $t \rightarrow \infty$, in relations \eqref{x-w for t}--\eqref{q_t}, we arrive at the respective relations
	\begin{align*}
		x-w & = (1-\lambda) f + \lambda p, 
		\\ (f + z) - (x - w) & = (1-\mu) f + \mu q
	\end{align*}
	and 
	\begin{align*}
		p & \in B, & h(B,u) & = \sprod{u}{p}, 
		\\ q & \in B, & h(B,u) & = \sprod{u}{q}. 
	\end{align*}
	The latter implies that 
	$x - w \in \conv( \{f \} \cup F) \cap (f+z - \conv(\{f\} \cup F))$ for $F:=F(B,u)$. Thus, $x - w$ belongs to $R$ and, by this, $x$ belongs to $R+ \Z^n$. Since $x$ was chosen arbitrarily, we arrive at $\R^n = R + \Z^n$. It follows that $B$ has the unique-lifting property.
\end{proof}

\newcommand{\cM}{\mathcal{M}}
\newcommand{\cU}{\mathcal{U}}

Let $\cU^n$ be the set of all maximal lattice-free polytopes in $\R^n$ with the unique-lifting property and let $\cM^n$ be the set of all maximal lattice-free polytopes in $\R^n$. We view $\cU^n$ and $\cM^n$ as metric spaces endowed with the Hausdorff metric. Clearly, $\cU^n \subseteq \cM^n$. Theorem~\ref{thm:limit} asserts that $\cU^n$ is a \emph{closed} subset of $\cM^n$. 

%

In view of Theorem~\ref{thm:limit}, one may wonder if the limit of every convergent sequence of maximal lattice-free polytopes with the unique-lifting property is necessarily a maximal lattice-free set. This question is beyond the scope of this manuscript; however, we note that the limit of a convergent sequence of \emph{arbitrary} maximal lattice-free polytopes is not necessarily a maximal lattice-free polytope. In other words, $\cM^n$ is not a closed subset of $\cK^n$. For every $n \ge 3$, this can be shown by considering a sequence of maximal lattice-free sets which get arbitrarily flat and in the limit become non-full-dimensional, following a construction from \cite[Example~3.4]{MR2969261} (we recall that according to our definition, maximal lattice-free sets in $\R^n$ are required to be $n$-dimensional). In the case $n=2$ one can show that the square $[0,1]^2$, which is lattice-free but not maximal lattice-free, lies in the closure of $\cM^2$. Let $c$ be the center of $[0,1]^2$. Rotating the square by a small angle around $c$ and slightly enlarging the rotated square by a homothetic transformation with homothetic center at $c$, we can construct a maximal lattice-free polytope from $\cM^2$ which is arbitrarily close to $[0,1]^2$. This shows that $[0,1]^2$ belongs to the closure of $\cM^2$. Hence, $\cM^2$ is not a closed subset of $\cK^2$. 

\section{Construction of polytopes with the unique-lifting property}

\label{constr sect}

\paragraph{Coproduct and its properties.} Recall that $\cF(P)$ denotes the set of all faces of $P$, $\cF^i(P)$ denotes the set of faces of dimension~$i$ and $\cF^{\dim(P)-1}(P)$ denotes the set of facets of $P$. Further, $h(P,u):= \max \setcond{\sprod{u}{x}}{x \in P}$ denotes the support function of the polytope $P$, and we will use the notation $F(P,u) =  \setcond{x \in P}{h(P,u) = \sprod{u}{x}}$ to be the optimal face of $P$ when maximizing in the direction of $u$.

Let $n_1, n_2 \in \N$ and $n:=n_1+n_2$. For each $i \in \{1,2\}$, let $o_i$ be the origin of $\R^{n_i}$. 

Given $K_1 \in \cK^{n_1}$ and $K_2 \in \cK^{n_2}$, the set 
\[
	K_1 \copr K_2 := \conv ( K_1 \times \{o_2\} \cup \{o_1\} \times K_2) \in \cK^{n}.
\]
is called the \emph{coproduct} of $K_1$ and $K_2$~\footnote{\cite[p.~250]{MR1730169} calls this construction the {\em free sum}; we use {\em coproduct} following a suggestion by Peter McMullen. The construction is dual to the operation of taking Cartesian products, i.e., when $o_i \in \intr(K_i)$ for each $i \in \{1,2\}$, we have the relation $(K_1 \times K_2)^\circ = K_1^\circ \copr K_2^\circ$ for the polar polytopes of $K_1 \times K_2$, $K_1$ and $K_2$.}.  Clearly, up to nonsingular affine transformations, pyramids and double pyramids can be given as coproducts $K_1 \copr K_2$ for polytopes $K_1, K_2$ with $o_i \in K_i$ for each $i \in \{1,2\}$ and $\dim(K_1) = 1$.  Note also that the coproduct operation is associative, and so in the expressions involving the coproduct of three and more sets we can omit brackets. We shall use coproducts of an arbitrary number of sets later in this section.  
  
Clearly,
\begin{equation}
	\label{coproduct as union}
	K_1 \copr K_2  = \bigcup_{0 \le \lambda \le 1} (1-\lambda) K_1 \times \lambda K_2.
\end{equation}
By the basic properties of the relative-interior operation (see \cite[Theorem~6.9]{MR0274683}), we have 
\begin{equation}
	\label{intr coproduct as union}
	\relintr(K_1 \copr K_2) = \bigcup_{0 < \lambda < 1} (1-\lambda) \relintr(K_1) \times \lambda \relintr(K_2).
\end{equation}
If $\dim(K_i) = n_i$ for each $i \in \{1,2\}$, then $\dim(K_1 \copr K_2) = n$ and the operation $\relintr$ in \eqref{intr coproduct as union} can be replaced by $\intr$.

\begin{lemma} 
	\thmheader{On faces of the coproduct of polytopes}
	\label{faces coproduct lem}
	For $i \in \{1,2\}$, let $n_i \in \N$, let $o_i$ be the origin of $\R^{n_i}$ and let $P_i$ be an $n_i$-dimensional polytope in $\R^{n_i}$ with $o_i \in P_i$. Let $P:=P_1 \copr P_2 \subseteq \R^n$, where $n:=n_1+n_2$. Let $F$ be a nonempty subset of $\R^n$. Then the following assertions hold:
	\begin{enumerate}[(a)]
		\item The set $F$ is a face of $P$ if and only if one of the following four conditions is fulfilled:
		\begin{align}
			\label{two faces without}
			F & = F_1 \copr F_2, & &\text{where} &  & o_i \not\in F_i \in \cF(P_i) \ \forall i \in \{1,2\}, & &\text{or}
			\\ 
			\label{two faces with}
			F & = F_1 \copr F_2, & &\text{where} & & o_i \in F_i \in \cF(P_i) \ \forall i \in \{1,2\}, & &\text{or}
			\\ 
			\label{the first face}
			F & = F_1 \times \{o_2\}, & &\text{where} & & o_1 \not\in F_1 \in \cF(P_1),  & &\text{or}
			\\ 
			\label{the second face}
			F & = \{o_1\} \times F_2, & & \text{where} & & o_2 \not\in F_2 \in \cF(P_2).
		\end{align}
		\item Under conditions \eqref{two faces without}, \eqref{two faces with}, \eqref{the first face} and \eqref{the second face}, the dimension of the face $F$ of $P$ is expressed by the equalities $\dim(F) = \dim(F_1) + \dim(F_2) +1$, $\dim(F) = \dim(F_1) + \dim(F_2)$, $\dim(F) = \dim(F_1)$ and $\dim(F) = \dim(F_2)$, respectively.
		\item The set $F$ is a facet of $P$ if and only if one of the following three conditions is fulfilled:
		\begin{align}
			\label{two facets without}
			F & = F_1 \copr F_2, & & \text{where} & & o_i \not\in F_i \in \cF^{n_i-1} (P_i) \ \forall i \in \{1,2\}, & &\text{or}
			\\ 
			\label{the first facet}
			F & = F_1 \copr P_2, & & \text{where} & & o_1 \in F_1 \in \cF^{n_1-1} (P_1), & &\text{or}
			\\ 
			\label{the second facet}
			F & = P_1 \copr F_2, & & \text{where} & & o_2 \in F_2 \in \cF^{n_2-1}(P_2).
		\end{align}
	\end{enumerate}
\end{lemma}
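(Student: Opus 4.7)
The plan is to prove (a) via a support-function analysis identifying every exposed face $F(P,u)$, and then to derive (b) by an affine-hull computation and (c) by a dimension count. For $u = (u_1, u_2) \in \R^{n_1} \times \R^{n_2}$, I would use the representation \eqref{coproduct as union} to obtain
\[
h(P, u) = \max_{\lambda \in [0,1]} \bigl[ (1-\lambda) h(P_1, u_1) + \lambda h(P_2, u_2) \bigr] = \max \{h(P_1, u_1),\, h(P_2, u_2)\},
\]
and to describe $F(P, u)$ by tracking which $\lambda$ attain the maximum. Because $o_i \in P_i$, the values $h(P_i, u_i)$ are nonnegative, and $h(P_i, u_i) > 0$ is equivalent to $o_i \notin F(P_i, u_i)$. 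The comparison of $h(P_1, u_1)$ and $h(P_2, u_2)$ therefore splits into exactly four subcases, which correspond to the four forms in (a): the strict inequality $h(P_1, u_1) > h(P_2, u_2)$ forces $\lambda = 0$ and yields \eqref{the first face}; the reverse inequality yields \eqref{the second face}; equality at a positive common value permits any $\lambda$ and yields \eqref{two faces without}; equality at the common value $0$ yields \eqref{two faces with}. Since every face of a polytope is exposed, this gives the ``only if'' direction of (a). The ``if'' direction then follows by constructing, in each of the four cases, a realizing $u$ by choosing $u_i$ with $F_i = F(P_i, u_i)$ and rescaling to match the required support values (or taking $u_i = 0$ when $F_i = P_i$).

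For part (b) the dimensions in cases \eqref{the first face} and \eqref{the second face} are immediate. For \eqref{two faces without}, the key observation is that $o_i \notin \aff F_i$: since $F_i$ is exposed by some $u_i$ with $h(P_i, u_i) > 0$, the supporting hyperplane $\{x : \sprod{u_i}{x} = h(P_i, u_i)\}$ containing $\aff F_i$ does not pass through $o_i$. Fixing $a \in \aff F_1$, $b \in \aff F_2$, and writing $T_i$ for the direction space of $\aff F_i$, I would then show that the direction space of $\aff F$ equals $T_1 \times T_2 + \vectorsp\{(a, -b)\}$; this sum is direct because $o_1 \notin \aff F_1$ gives $a \notin T_1$, hence $(a, -b) \notin T_1 \times T_2$. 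This yields $\dim F = \dim F_1 + \dim F_2 + 1$. In case \eqref{two faces with} the same calculation with $a = o_1$, $b = o_2$ makes the extra summand vanish, leaving $\dim F = \dim F_1 + \dim F_2$.

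For part (c) I would match each dimension formula from (b) against the facet dimension $n - 1 = n_1 + n_2 - 1$. Case \eqref{two faces without} forces $\dim F_1 + \dim F_2 = n_1 + n_2 - 2$, and the bound $\dim F_i \le n_i - 1$ (valid because $o_i \notin F_i$ rules out $F_i = P_i$) forces equality in each coordinate, giving \eqref{two facets without}. Case \eqref{two faces with} forces $\dim F_1 + \dim F_2 = n_1 + n_2 - 1$, which together with $\dim F_i \le n_i$ forces exactly one of $F_1, F_2$ to equal the ambient $P_i$, producing \eqref{the first facet} or \eqref{the second facet}. Cases \eqref{the first face} and \eqref{the second face} cannot yield facets, since they would demand $\dim F_i \ge n_i$, hence $F_i = P_i$, contradicting $o_i \notin F_i$. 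The main obstacle I anticipate is the affine-hull computation in case \eqref{two faces without} of (b), which rests on the short but essential sub-claim $o_i \notin F_i \Rightarrow o_i \notin \aff F_i$; with that in hand, everything else is a direct unpacking of the support-function formula.
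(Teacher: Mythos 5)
Your proposal is correct and follows essentially the same route as the paper: part (a) rests on the identity $h(P,u)=\max\{h(P_1,u_1),h(P_2,u_2)\}$ with the ensuing comparison of $h(P_1,u_1)$ and $h(P_2,u_2)$ (and exposedness of faces plus rescaling of the $u_i$ for the converse), and part (c) is obtained by matching the dimension formulas of (b) against $n-1$, which is exactly what the paper leaves as "straightforward." The only divergence is in (b) for case \eqref{two faces without}: you compute the direction space of $\aff(F)$ directly as $T_1\times T_2+\vectorsp\{(a,-b)\}$, justified by the sub-claim $o_i\notin F_i\Rightarrow o_i\notin\aff(F_i)$, whereas the paper cones $F$ with the origin and reduces to case \eqref{two faces with} via $\conv(F\cup\{o\})=\conv(F_1\cup\{o_1\})\copr\conv(F_2\cup\{o_2\})$; both arguments are valid and of comparable length.
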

\begin{proof}	
	\emph{Assertion~(a):} We start with the necessity in (a). We assume that $F$ is a face of $P$ and show that one of the four conditions \eqref{two faces without}--\eqref{the second face} is fulfilled. By the assumptions of the lemma, $F$ is nonempty, and so we have 
	\[
		F=F(P,u) =  \setcond{x \in P}{h(P,u) = \sprod{u}{x}}
	\]
	for some $u :=(u_1,u_2) \in \R^{n_1} \times \R^{n_2}$. Since
	\[
		h(P,u) = \max \{ h(P_1,u_1), h(P_2,u_2)\},
	\]
	we get: 
	\begin{align}
		\label{face of coproduct eq =}
		F(P,u) & = F(P_1,u_1) \copr F(P_2,u_2) 
			& & \text{if} & h(P_1,u_1) &=h(P_2,u_2),
		\\
		\label{face of coproduct eq >}
		F(P,u) & =	F(P_1,u_1) \times \{o_2\}
			& & \text{if} & h(P_1,u_1) & > h(P_2,u_2), 
		\\
		\label{face of coproduct eq <}
		F(P,u) &= \{o_1\} \times F(P_2,u_2)
			& & \text{if} & h(P_1,u_1) & < h(P_2,u_2).
	\end{align}
	
	Since for each $i \in \{1,2\}$, one has $o_i \in P_i$, the support function $h(P_i,\dotvar)$ is nonnegative. We compare $h(P_1,u_1)$ with $h(P_2,u_2)$ and distinguish three cases as in \eqref{face of coproduct eq =}--\eqref{face of coproduct eq <}.
	
	In the case $h(P_1,u_1)=h(P_2,u_2)$, we define $F_i := F(P_i,u_i)$ for $i \in \{1,2\}$. One has either $h(P_1,u_1)=h(P_2,u_2)>0$ or $h(P_1,u_1)=h(P_2,u_2)=0$. In the former case, the face $F(P_i,u)$ of $P_i$ does not contain $o_i$ for each $i \in \{1,2\}$ and, by this, \eqref{two faces without} is fulfilled. In the latter case, the face $F(P_i,u_i)$ of $P_i$ contains $o_i$ for each $i \in \{1,2\}$ and, by this, \eqref{two faces with} is fulfilled. If $h(P_1,u_1) > h(P_2,u)$, we have $h(P_1,u_1) > 0$ and so \eqref{the first face} is fulfilled for $F_1 := F(P_1,u_1)$. Analogously, if $h(P_2,u_1) < h(P_2,u)$, we have $h(P_2,u_2)>0$ and so \eqref{the second face} is fulfilled for $F_2 := F(P_2,u_2)$. This proves the necessity in (a).
		
	For proving the sufficiency, we assume that $F$ fulfills one of the four conditions \eqref{two faces without}--\eqref{the second face} and show that $F=F(P,u)$ for an appropriate choice of $u \in \R^n$. Consider the case that $F$ fulfills \eqref{two faces without}. For each $i \in \{1,2\}$, we choose a vector $u_i \in \R^{n_i}$, with $F_i = F(P_i,u_i)$. Since $o_i \not \in F_i$, we have $h(P_i,u_i) > 0$ and by this also $u_i \ne o_i$. Appropriately rescaling the vectors $u_1$ and $u_2$, we ensure the inequality $h(P_1,u_1) = h(P_2,u_2)$. In view of \eqref{face of coproduct eq =}, it follows that $F=F(P,u)$ with $u=(u_1,u_2)$. The case that $F$ fulfills \eqref{two faces with} is similar. For each $i \in \{1,2\}$ we choose $u_i \in \R^{n_i}$ with $F_i = F(P_i,u_i)$. Since $o_i \in F_i$, we have $h(P_i,u_i)=0$.  In view of \eqref{face of coproduct eq =}, $F=F(P,u)$ with $u=(u_1,u_2)$. In the case that $F$ fulfills \eqref{the first face}, we choose $u_1 \in \R^{n_1}$ satisfying $F_1=F(P_1,u_1)$ and define $u_2:=o_2$. Since $o_1 \not\in F_1$, we get $h(P_1,u_1)>0$. In view of \eqref{face of coproduct eq >}, we get $F=F(P,u)$ for $u=(u_1,u_2)$. The case that $F$ fulfills \eqref{the second face} is completely analogous to the previously considered case.

	\emph{Assertion~(b):} The cases \eqref{the first face} and \eqref{the second face} are trivial. In the case \eqref{two faces with}, the face $F$ contains $o$, and so the dimension of $F$ is the dimension of the linear hull of $F$. Applying the definition of the coproduct and the fact that, for each $i \in \{1,2\}$, the face $F_i$ contains $o_i$, we see that the linear hull of $F$ is the Cartesian product of the linear hulls of $F_1$ and $F_2$. Hence $\dim(F) = \dim(F_1) + \dim(F_2)$. The case \eqref{two faces without} can be handled by a reduction to the case \eqref{two faces with}. Assume that \eqref{two faces without} is fulfilled. Then, in view of \eqref{coproduct as union}, the set $F= F_1 \copr F_2$ does not contain $o$. Then $\dim(K) = \dim(F)+1$ for $K := \conv(F \cup \{o\})$. Using the definition of the coproduct, we can easily verify the equality $K = \conv (F_1 \cup \{o_1\}) \copr \conv(F_2 \cup \{o_2\})$. Using the argument from the case \eqref{two faces with} we conclude that $\dim (K) = \dim(\conv(F_1 \cup \{o_1\}) + \dim (\conv(F_2 \cup \{o_2\})$. Since $o_i \not\in F_i$, we have $\dim(\conv(F_i \cup \{o_i\})) = \dim(F_i) + 1$. Hence $\dim(F) = \dim(F_1) + \dim(F_2) + 1$.
	
	\emph{Assertion~(c)} is a straightforward consequence of (a) and (b).
\end{proof}

\paragraph{Constructions based on the coproduct operation.}  
It will be more convenient to work with general affine lattices and then specialize to the $\Z^n$ case. To that end, we say that a set $\Lambda \subseteq \R^n$ is an affine lattice of rank $n$ if $\Lambda$ is a translation of a lattice of rank $n$. Equivalently, a set $\Lambda \subset \R^n$ is an affine lattice of rank $n$ if and only if there exist affinely independent points $x_0,\ldots,x_n \in \R^n$ such that $\Lambda$ is the set of all $x=z_0 x_0 + \cdots + z_n x_n$ with $z_0,\ldots,z_n \in \Z$ and $z_0 + \cdots + z_n =1$. 
Note that the notions of lattice-free set, maximal lattice-free set, lifting region (denoted by $R(B,f)$) and sets with the unique-lifting property, which were introduced with respect the integer lattice $\Z^n$, can be extended directly to the more general situation where, in place of $\Z^n$, we take an arbitrary affine lattice $\Lambda$ of rank $n$ in $\R^n$. Thus, for such $\Lambda$ we can introduce the respective notions of \emph{$\Lambda$-free set}, \emph{maximal $\Lambda$-free set}, \emph{lifting region with respect to $\Lambda$} (which we will denote by $R_\Lambda(B,f)$) and \emph{set with the unique-lifting property with respect to $\Lambda$}. 
We shall use the following result of Lov\'asz \cite{MR1114315}; see  also \cite{MR2724071} and \cite{MR3027668}.

\begin{theorem} \thmheader{Lov\'asz's characterization of maximal lattice-free sets}
	\label{lovasz thm}
	Let $B$ be an $n$-dimensional $\Lambda$-free polyhedron in $\R^n$. Then $B$ is maximal $\Lambda$-free if and only if $\relintr(F) \cap \Lambda \ne \emptyset$ for every facet $F$ of $B$.
\end{theorem}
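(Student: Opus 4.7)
The \emph{if} direction is a quick convexity argument. Assume $\relintr(F) \cap \Lambda \neq \emptyset$ for every facet $F$ of $B$, and let $B'$ be any convex set strictly containing $B$. Some point $p \in B' \setminus B$ must strictly violate some facet-defining inequality $a \cdot x \le b$ of $B$; let $F$ be the corresponding facet, pick $z \in \relintr(F) \cap \Lambda$, and fix any interior point $q$ of $B$. Since $q$ and $p$ lie strictly on opposite sides of the hyperplane $\aff(F)$ while a relative neighborhood of $z$ in $\aff(F)$ lies in $F \subseteq B'$, by convexity of $B'$ the point $z$ lies in $\intr(B')$, so $B'$ fails to be $\Lambda$-free. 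Hence $B$ admits no proper $\Lambda$-free enlargement and is maximal $\Lambda$-free.

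For the \emph{only if} direction I prove the contrapositive. Write $B = \setcond{x \in \R^n}{a \cdot x \le b,\; a_i \cdot x \le b_i \; \forall i \in I}$, where $a \cdot x \le b$ defines a facet $F$ with $\relintr(F) \cap \Lambda = \emptyset$, and consider the enlargement $B_\eps := \setcond{x \in \R^n}{a \cdot x \le b + \eps,\; a_i \cdot x \le b_i \; \forall i \in I}$, which strictly contains $B$ for every $\eps > 0$. The goal is to pick $\eps$ so that $B_\eps$ remains $\Lambda$-free, contradicting maximality. Any $z \in \intr(B_\eps) \cap \Lambda$ satisfies $a_i \cdot z < b_i$ for all $i$ and $a \cdot z < b + \eps$: the subcase $a \cdot z < b$ gives $z \in \intr(B) \cap \Lambda$, contradicting $\Lambda$-freeness of $B$; the subcase $a \cdot z = b$ together with the other strict inequalities places $z$ in $\relintr(F) \cap \Lambda$, contradicting hypothesis. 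Hence only the narrow-slab subcase $b < a \cdot z < b + \eps$ must be ruled out.

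The main obstacle is excluding this narrow-slab subcase and thereby fixing $\eps$. Consider the closed one-sided slab $S := \setcond{x \in \R^n}{b \le a \cdot x \le b + 1,\; a_i \cdot x \le b_i \; \forall i \in I}$ sitting above $F$. When $F$ is bounded, $S$ is compact, $S \cap \Lambda$ is finite, and no element of it lies on $\aff(F)$ by the two subcases already handled; then any $\eps$ smaller than $\min \setcond{a \cdot z - b}{z \in S \cap \Lambda}$ (or arbitrary if this set is empty) works. When $F$ is unbounded, I would reduce to the bounded case by quotienting $S$ by a lattice of translations parallel to $\rec(F) \subseteq \{x : a \cdot x = 0\}$: a compact fundamental domain of this action intersects $\Lambda$ in finitely many orbit representatives, and the same minimum-value choice of $\eps$ then suffices. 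The delicate technical point, which I expect to be the hardest part, is ensuring the existence of such a lattice-preserving translation action when $B$ is only assumed $\Lambda$-free rather than maximal; here I would invoke Lov\'asz's structural results from \cite{MR1114315} on recession cones of lattice-free polyhedra to pick rational generators of the relevant recession directions of $F$ and then pass to an integer sublattice preserving $\Lambda$.
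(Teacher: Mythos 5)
The paper does not prove this statement at all: it is quoted as a known theorem with citations to Lov\'asz \cite{MR1114315} and to \cite{MR2724071}, \cite{MR3027668}, so your proposal must stand on its own. Your ``if'' direction is correct and is the standard easy argument. The gap is in the ``only if'' direction, precisely at the unbounded case you flag as delicate. In the contrapositive you may only assume that $B$ is $\Lambda$-free, and for such polyhedra the recession cone of a facet (or of your slab $S$) can be an irrational cone containing no nonzero lattice vector: for example, $B=\setcond{(x,y,z)\in\R^3}{0\le z\le 1,\ y\ge \sqrt{2}\,x}$ is lattice-free, and its facet on $\{y=\sqrt{2}\,x\}$ has recession cone spanned by $(1,\sqrt{2},0)$. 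Hence there is in general no lattice of translations parallel to $\rec(F)$ with compact fundamental domain, and the structural result you propose to invoke (rationality of recession cones) holds for \emph{maximal} lattice-free sets --- exactly the hypothesis you cannot use here. Moreover, your quantitative criterion is the wrong one even when it is finite: in the same example the closed slab $S$ above the irrational facet contains lattice points $(x,y,z)$ with $z\in\{0,1\}$ whose distance $a\cdot z-b$ to the facet hyperplane is arbitrarily small and positive, so $\min\setcond{a\cdot z-b}{z\in S\cap\Lambda,\ a\cdot z>b}=0$ and your rule produces no admissible $\eps$, although every pushout $B_\eps$ is lattice-free because those points satisfy another facet inequality with equality rather than strictly. (There is also a minor slip in the bounded case: lattice points on the relative \emph{boundary} of $F$ lie in $S\cap\Lambda\cap\aff(F)$, so the minimum must be taken only over points with $a\cdot z>b$.)

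The substantive missing step is therefore: rule out an unbounded sequence of lattice points that satisfy all the other facet inequalities \emph{strictly} and approach $\aff(F)$ from outside, or else exhibit a different enlargement (e.g.\ a tilted halfspace) when such a sequence exists. In the plane this can be excluded by a convexity-plus-equidistribution argument (the accumulating points together with an interior point of $B$ would force a positive-width strip on the $B$-side of $\aff(F)$, which must contain lattice points, contradicting $\Lambda$-freeness), but in dimension $n\ge 3$ a thin neighborhood of an irrational ray need not contain lattice points, so this does not generalize naively. Handling exactly this phenomenon is the real content of Lov\'asz's theorem; the known proofs first establish, via Dirichlet/flatness-type arguments, that a maximal $\Lambda$-free set is a polytope plus a rational linear space, and only then does the facet-pushing argument go through. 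As written, your proof of the ``only if'' direction is incomplete.
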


The following is the main result of this section.

\begin{theorem}
	\thmheader{Coproduct construction of various types of $\Lambda$-free sets}
	\label{coproduct thm}
	For $i \in \{1,2\}$, let $n_i \in \N$, let $o_i$ be the origin of $\R^{n_i}$, let $\Lambda_i$ be an affine lattice of rank $n_i$ in $\R^{n_i}$ and let $B_i$ be an $n_i$-dimensional polytope with $o_i \in B_i$. Let $0 < \mu < 1$. Then, for the $n$-dimensional polytope $B:=B_1 \copr B_2$ with $n : = n_1 + n_2$ and the affine lattice $\Lambda := (1-\mu) \Lambda_1 \times \mu \Lambda_2$ of rank $n$, the following assertions hold:
	\begin{enumerate}[(a)]
		\item If $B_i$ is $\Lambda_i$-free for each $i \in \{1,2\}$, then $B$ is $\Lambda$-free.
		\item If $B_i$ is maximal $\Lambda_i$-free for each $i \in \{1,2\}$, then $B$ is maximal $\Lambda$-free.
		\item If $B_i$ is maximal $\Lambda_i$-free and has the unique-lifting property with respect to $\Lambda_i$ for  each $i \in \{1,2\}$, then $B$ is maximal $\Lambda$-free and has the unique-lifting property with respect to $\Lambda$.
	\end{enumerate}
\end{theorem}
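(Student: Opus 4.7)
I will show $B$ is $\Lambda$-free by contradiction. Suppose $p=((1-\mu)z_1,\mu z_2)\in\intr(B)\cap\Lambda$ with $z_i\in\Lambda_i$. By \eqref{intr coproduct as union}, also $p=((1-\lambda)x_1,\lambda x_2)$ for some $\lambda\in(0,1)$ and $x_i\in\intr(B_i)$, which gives $x_1=\tfrac{1-\mu}{1-\lambda}z_1$ and $x_2=\tfrac{\mu}{\lambda}z_2$. If $\lambda=\mu$, then $z_i=x_i\in\intr(B_i)\cap\Lambda_i$, contradicting the $\Lambda_i$-freeness of $B_i$. If $\lambda<\mu$, then $z_2=\tfrac{\lambda}{\mu}x_2$ lies on the open segment $(o_2,x_2)$, which is contained in $\intr(B_2)$ by the standard convexity fact that the open segment from any point of a convex set to an interior point lies in the interior; hence $z_2\in\intr(B_2)\cap\Lambda_2$, again a contradiction. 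The case $\lambda>\mu$ is symmetric, placing $z_1$ in $\intr(B_1)\cap\Lambda_1$.

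\textbf{Part~(b).} Granted (a), I apply Lov\'asz's characterization (Theorem~\ref{lovasz thm}) and must verify that every facet of $B$ contains a $\Lambda$-point in its relative interior. Using the facet classification in Lemma~\ref{faces coproduct lem}(c) I consider three cases. For a type~(i) facet $F=F_1\copr F_2$ with $o_i\notin F_i$: Lov\'asz on $B_i$ yields $z_i\in\relintr(F_i)\cap\Lambda_i$, and \eqref{intr coproduct as union} with $\lambda=\mu$ places $((1-\mu)z_1,\mu z_2)$ in $\relintr(F)\cap\Lambda$. For a type~(ii) facet $F=F_1\copr B_2$ with $o_1\in F_1$: since $B_2$ is bounded, $n_2$-dimensional, and contains $o_2$, some facet $G_2$ of $B_2$ does not contain $o_2$, and Lov\'asz supplies $z_1\in\relintr(F_1)\cap\Lambda_1$ and $z_2\in\relintr(G_2)\cap\Lambda_2$. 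For $\lambda$ slightly larger than $\mu$, the point $y_2:=\tfrac{\mu}{\lambda}z_2$ lies on the open segment $(o_2,z_2)$, which is contained in $\intr(B_2)$ because $o_2\in B_2$, $z_2\in\relintr(G_2)$, and $o_2\notin G_2$ keep this segment off every proper face; simultaneously $y_1:=\tfrac{1-\mu}{1-\lambda}z_1$ converges to $z_1\in\relintr(F_1)$ as $\lambda\downarrow\mu$, so $y_1\in\relintr(F_1)$ for $\lambda$ sufficiently close to $\mu$ by openness of the relative interior. Hence $((1-\mu)z_1,\mu z_2)\in\relintr(F)$ via \eqref{intr coproduct as union}. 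Type~(iii) is symmetric.

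\textbf{Part~(c).} By (b), $B$ is maximal $\Lambda$-free. The $\Lambda$-analogue of Theorem~\ref{inv:thm}, built from the $\Lambda$-version of Theorem~\ref{thm:affine}, reduces unique-lifting of $B$ to checking $R_\Lambda(B,f)+\Lambda=\R^n$ for some $f\in\intr(B)$. I fix $f:=((1-\mu)f_1,\mu f_2)$ with $f_i\in\intr(B_i)$. For arbitrary $x=(x_1,x_2)\in\R^n$, the unique-lifting of each $B_i$ produces $w_i\in\Lambda_i$ and $\tilde y_i\in R_{\Lambda_i}(B_i,f_i)$ with $\tfrac{x_1}{1-\mu}=\tilde y_1+w_1$ and $\tfrac{x_2}{\mu}=\tilde y_2+w_2$; then $x-w=((1-\mu)\tilde y_1,\mu\tilde y_2)$ with $w:=((1-\mu)w_1,\mu w_2)\in\Lambda$, and it remains to show $((1-\mu)\tilde y_1,\mu\tilde y_2)\in R_\Lambda(B,f)$. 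With $\tilde y_i\in S_{F_i,z_i}(f_i)$ for some facet $F_i$ of $B_i$ and $z_i\in F_i\cap\Lambda_i$, I identify an appropriate facet $F$ of $B$ and $z\in F\cap\Lambda$: when $o_i\notin F_i$ for both $i$, one takes $F:=F_1\copr F_2$ (type~(i)) and $z:=((1-\mu)z_1,\mu z_2)$, and both $((1-\mu)\tilde y_1,\mu\tilde y_2)$ and its reflection $z+f-((1-\mu)\tilde y_1,\mu\tilde y_2)=((1-\mu)(z_1+f_1-\tilde y_1),\mu(z_2+f_2-\tilde y_2))$ are verified to lie in $\conv(\{f\}\cup F)$ via the coproduct decomposition of the pyramid together with the spindle conditions $\tilde y_i,\,z_i+f_i-\tilde y_i\in\conv(\{f_i\}\cup F_i)$. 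The main technical obstacle is the mixed-parity situation $o_1\in F_1$ and $o_2\notin F_2$ (or vice versa): then $F_1\copr F_2$ is no longer a facet of $B$, and one must instead route through the type~(ii) facet $F_1\copr B_2$ (resp.\ type~(iii) $B_1\copr F_2$), exploiting the pyramid containment $(1-\mu)\conv(\{f_1\}\cup F_1)\times\mu B_2\subseteq\conv(\{f\}\cup(F_1\copr B_2))$ and, if necessary, an additional lattice translation in $\Lambda$ to land the product inside the correct spindle $S_{F,z}(f)$.
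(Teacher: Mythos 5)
Your parts~(a) and~(b) are correct; (b) is a mild variant of the paper's argument (you re-use Lov\'asz's theorem on a facet $G_2$ of $B_2$ with $o_2\notin G_2$ and a segment argument, where the paper instead uses maximality of $B_2$ to find a $\Lambda_2$-point in the dilated interior $\tfrac{\lambda}{\mu}\intr(B_2)$), and both routes work. The problem is in part~(c): the containment you assert in the ``pure'' case $o_1\notin F_1$, $o_2\notin F_2$, namely that $\tilde y_i\in\conv(\{f_i\}\cup F_i)$ forces $((1-\mu)\tilde y_1,\mu\tilde y_2)\in\conv(\{f\}\cup F)$ with $f=((1-\mu)f_1,\mu f_2)$, is false for a general choice of $f_i\in\intr(B_i)$. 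Concretely, take $n_1=n_2=1$, $B_1=B_2=[-1,1]$, $\Lambda_1=\Lambda_2=2\Z+1$, $\mu=\tfrac12$, $f_1=f_2=-\tfrac12$, $F_1=F_2=\{1\}$, $z_1=z_2=1$, and the legitimate spindle points $\tilde y_1=f_1\in S_{F_1,z_1}(f_1)$, $\tilde y_2=z_2\in S_{F_2,z_2}(f_2)$. Then $f=(-\tfrac14,-\tfrac14)$, $F=F_1\copr F_2=\conv\{(1,0),(0,1)\}$, and $\conv(\{f\}\cup F)$ is the triangle with vertices $(-\tfrac14,-\tfrac14)$, $(1,0)$, $(0,1)$, which is contained in the halfplane $y\le 5x+1$; the product point $((1-\mu)\tilde y_1,\mu\tilde y_2)=(-\tfrac14,\tfrac12)$ violates this inequality, so it lies outside the pyramid and a fortiori outside $S_{F,z}(f)$. (It is covered by $R_\Lambda(B,f)+\Lambda$ only after a further lattice translation, which your argument does not produce in this case; you invoke such a translation only in the mixed case, and there too it is asserted, not proved -- and the mixed-case pyramid containment $(1-\mu)\conv(\{f_1\}\cup F_1)\times\mu B_2\subseteq\conv(\{f\}\cup(F_1\copr B_2))$ fails by a similar example.)

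The underlying issue is that the identity $\conv(F\cup\{f\})=\conv(F_1\cup\{f_1\})\copr\conv(F_2\cup\{f_2\})$, which drives the inclusion $(1-\mu)S_{F_1,z_1}(f_1)\times\mu S_{F_2,z_2}(f_2)\subseteq S_{F,z}(f)$, holds only when $f_1=o_1$ and $f_2=o_2$, i.e.\ when the anchor is the distinguished point $f=o$ of the coproduct. But the hypotheses only give $o_i\in B_i$, so $o$ may lie on the boundary of $B$, and then not every facet of $B$ is of the form $F_1\copr F_2$ (Lemma~\ref{faces coproduct lem}(c) also produces the facets $F_1\copr B_2$ and $B_1\copr F_2$), so the product-of-lifting-regions argument does not go through verbatim. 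This is exactly the difficulty the paper resolves: it proves the covering $R_\Lambda(B,o)+\Lambda=\R^n$ only in the case $o_i\in\intr(B_i)$ (where all facets are of type $F_1\copr F_2$ and $o\in\intr(B)$), and handles $o_i\in\partial B_i$ by translating $B_i$ so that the origin moves into the interior, applying the interior case, and then passing to Hausdorff limits via the closedness result of Theorem~\ref{thm:limit}. Your proposal neither restricts to $f=o$ nor supplies a substitute for this limiting (or some other) argument, so part~(c) has a genuine gap.
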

\begin{proof}
	\emph{(a):} Consider an arbitrary point $x=(x_1,x_2) \in \R^{n_1} \times \R^{n_2}$ belonging to $\intr(B_1 \copr B_2)$. In view of \eqref{intr coproduct as union}, one has $x_1 \in (1-\lambda) \intr(B_1)$ and $x_2 \in \lambda \intr(B_2)$ for some $0 < \lambda < 1$. If $\lambda \ge \mu$, then taking into account $o_1 \in B_1$, we obtain $x_1 \in (1-\lambda) \intr(B_1) \subseteq (1-\mu) \intr(B_1)$. Since $B_1$ is $\Lambda_1$-free, we have $x_1 \not\in (1-\mu)\Lambda_1$ implying that $x \not\in \Lambda$. Analogously, in the case $\lambda \le \mu$, we deduce that the point $x_2$ is not in $\mu\Lambda_2$ and thus $x \not\in \Lambda$. 
		
	\emph{(b):} Assume that $B_i$ is maximal $\Lambda_i$-free for each $i \in \{1,2\}$. We show that $B$ is maximal $\Lambda$-free. In view of (a), the polytope $B$ is $\Lambda$-free.  By Theorem~\ref{lovasz thm}, in order to verify the maximality of $B$ it suffices to show that the relative interior of each facet of $B$ contains a point of $\Lambda$.  Let $F$ be an arbitrary facet of $B$. We use the classification of facets of the coproduct which is provided by Lemma~\ref{faces coproduct lem}.(c). Consider the case that $F$ fulfills condition \eqref{two facets without}. By Theorem~\ref{lovasz thm} applied to the maximal $\Lambda_i$-free set $B_i$, for each $i \in \{1,2\}$ there exists a point $x_i \in \relintr(F_i)$ belonging to $\Lambda_i$. Then the point $x := ((1-\mu) x_1, \mu x_2)$ belongs to $\Lambda$. By \eqref{intr coproduct as union}, the point $x$ also belongs to $\relintr(F)$. Let us switch to the case that $F$ fulfills condition \eqref{the first facet}. By Theorem~\ref{lovasz thm}, there exists a point $x_1 \in \relintr(F_1)$ which belongs to $\Lambda_1$. Then $(1-\mu) x_1 \in (1-\mu) \relintr(F_1)$. Since $o_1 \in F_1$, the latter containment relation remains valid if we slightly shrink the right hand side $(1-\mu) \relintr(F_1)$. That is, $(1-\mu) x_1 \in (1-\lambda) \relintr(F_1)$ for
	some $\lambda$ satisfying $\mu < \lambda < 1$, which is sufficiently close to $\mu$. Since $o_2 \in B_2$, we have $\intr(B_2) \varsubsetneq \frac{\lambda}{\mu} \intr(B_2)$. Since $B_2$ is maximal $\Lambda_2$-free, there exists a point $x_2 \in \frac{\lambda}{\mu} \intr(B_2)$ which belongs to $\Lambda_2$. It follows that $\mu x_2 \in \lambda \intr(B_2)$ is a point belonging to $\mu \Lambda_2$. Thus, $x:=((1-\mu) x_1, \mu x_2)$ is a point belonging to $(1-\lambda) \relintr(F_1) \times \lambda \intr(B_2)$ and to $\Lambda$. Taking into account \eqref{intr coproduct as union}, we see that $x$ belongs to $\relintr(F)$. The case of $F$ fulfilling condition \eqref{the first facet} is completely analogous to the previously considered case. Summarizing, we conclude that the relative interior of each facet of $B$ contains a point of $\Lambda$. Thus, by Theorem~\ref{lovasz thm}, the set $B$ is maximal $\Lambda$-free.
	
	\emph{(c):} We distinguish two cases. 

	\emph{Case~1: $o_i \in \intr(B_i)$ for each $i \in \{1,2\}$.} In this case $o \in \intr(B)$. Thus for showing that $B$ has the unique-lifting property with respect to $\Lambda$, it suffices to check the equality $R  + \Lambda = \R^n$ for the lifting region
	\begin{equation}
		\label{R expression}
		R := R_\Lambda(B,o) = \bigcup_{\stackrel{F \in \cF^{n- 1}(B)}{z \in \Lambda \cap F} } S_{F,z}(o),
	\end{equation}
	where, for every $F \in \cF^{n - 1}(B)$ and $z \in \Lambda \cap F$, one has
	\begin{equation}
		\label{spindle wrt o}
		S_{F,z}(o) = \conv (F \cup \{o\}) \cap \bigl(z - \conv(F \cup \{o\}) \bigr).
	\end{equation}  
	Since $o_i \in \intr(B_i)$, no facet of $B_i$ contains $o_i$. Thus, by Lemma~\ref{faces coproduct lem}.(c), a subset $F$ of $\R^n$ is a facet of $B$ if and only if $F = F_1 \copr F_2$, where $F_i$ is a facet of $B_i$ for each $i \in \{1,2\}$. We consider an arbitrary such facet $F=F_1 \copr F_2$. Choose also an arbitrary $z_i \in F_i \cap \Lambda_i$ for each $i \in \{1,2\}$ and introduce the point $z:=((1-\mu) z_1, \mu z_2)$, which by construction belongs to $F \cap \Lambda$. We establish an inclusion relation between $S_{F,z}(o)$ and the two sets $S_{F,z_i}(o_i)$ with $i \in \{1,2\}$. The set $\conv(F \cup \{o\})$, which occurs twice on the right hand side of \eqref{spindle wrt o}, fulfills
	\begin{align}
		\nonumber
		\conv(F \cup \{o\}) 
		& = \conv \bigl(F_1 \times \{o_2\} \cup \{o_1\} \times F_2 \cup \{o\} \bigr) 
		\\ \nonumber & = \conv \bigl( (F_1 \cup \{o_1\}) \times \{o_2\} \cup \{o_1\} \times (F_2 \cup \{o_2\}) \bigr) 
		\\ \nonumber & = \conv(F_1 \cup \{o_1\}) \copr \conv(F_2 \cup \{o_2\})
		\\ & \supseteq (1-\mu) \conv(F_1 \cup \{o_1\}) \times \mu \conv(F_2 \cup \{o_2\}). \label{incl}
	\end{align}
	
Therefore, also $z - \conv(F \cup \{o\}) \supseteq (1-\mu)(z_1 - \conv(F_1 \cup \{o_1\})\times \mu(z_2 - \conv(F_2 \cup \{o_2\}))$. Analogously to \eqref{spindle wrt o}, one has $S_{F_i,z_i}(o_i) = \conv(F_i \cup \{o_i\}) \cap \bigl (z_i - \conv(F_i \cup \{o_i\}\bigr)$ for each $i \in \{1,2\}$. 
	From this and \eqref{incl}, we get
	\begin{align*}
		S_{F,z}(o) \supseteq (1-\mu) S_{F_1,z_1}(o_1) \times \mu S_{F_2,z_2}(o_2).
	\end{align*}
	In view of \eqref{R expression}, the latter yields the following relation between $R$ and the lifting regions $R_i:= R_{\Lambda_i} (B_i,o_i)$ with $i \in \{1,2\}$:
	\[
		R \supseteq (1-\mu) \bigcup_{\stackrel{F_1 \in \cF^{n_1-1}(B_1)}{z_1 \in \Lambda_1 \cap F_1}} S_{F_1,z_1}(o_1) \times \mu \bigcup_{\stackrel{F_2 \in \cF^{n_2-1}(B_1)}{z_2 \in \Lambda_2 \cap F_2}} S_{F_2,z_2}(o_2) = (1-\mu) R_1 \times \mu R_2.
	\]
	Consequently, using the fact that $B_i$ has the unique-lifting property with respect to $\Lambda_i$ and, by this, $R_i + \Lambda_i = \R^{n_i}$ for each $i \in \{1,2\}$, we obtain
	\begin{align*}
		R + \Lambda 
		& \supseteq (1-\mu) R_1 \times \mu R_2 + (1-\mu) \Lambda_1 \times \mu \Lambda_2 
		\\ & = (1-\mu) (R_1 + \Lambda_1) \times \mu (R_2 + \Lambda_2) 
		= (1-\mu) \R^{n_1} \times \mu \R^{n_2} = \R^n.
	\end{align*}
	Thus, $R+\Lambda = \R^n$, and so $B$ has the unique-lifting property with respect to $\Lambda$.
	
	\emph{Case~2: $o_i \not \in \intr(B_i)$ for some $i \in \{1,2\}$.} For each $i \in \{1,2\}$, we have $o_i \in B_i$. Thus we can choose a sequence  $(x_{i,t})_{t \in \N}$ of points in $\intr(B_i)$ converging to $o_i$. For every $i \in \{1,2\}$ and $t \in \N$, the interior of $B_i - x_{i,t}$ contains $o_i$. Clearly, the set $B_i - x_{i,t}$ is maximal $(\Lambda_i - x_{i,t})$-free and has the unique-lifting property with respect to $\Lambda_i - x_{i,t}$. Hence, we can apply the assertion obtained in Case~1. It follows that the set $(B_1 - x_{1,t}) \copr (B_2 - x_{2,t})$ is maximal $(1-\mu) (\Lambda_1 - x_{1,t}) \times \mu (\Lambda_2- x_{2,t})$-free and has the unique-lifting property with respect to this affine lattice. We introduce the vector 
	\[
		x_t:= \bigl((1-\mu) x_{1,t}, \mu x_{2,t} \bigr),
	\]
	and the set $B_t:=(B_1 - x_{1,t}) \copr (B_2 - x_{2,t}) + x_t$ (note that for $t \in \{1,2\}$, there is a collision of notations, because $B_1$ and $B_2$ are already introduced; we avoid this collision by imposing the additional condition $t \ge 3$). The set $B_t$ is maximal $\Lambda$-free and has the unique-lifting property with respect to $\Lambda$. We check that $B_t \rightarrow B = B_1 \copr B_2$, as $t \rightarrow \infty$. 
	Using the notation
	\begin{align*}
		B'_{1,t} & := (B_1 - x_{1,t} ) \times \{o_2\}, & B_1' & := B_1 \times \{o_2\}, \\
		B'_{2,t} & := \{o_1\} \times (B_2 - x_{2,t}), & B_2' & := \{o_1\} \times B_2,
	\end{align*}
	we obtain the following upper bounds on $\dist(B_t,B)$:
	\begin{align*}
		\dist(B_t,B)  = & \dist \bigl(\conv(B_{1,t}' \cup B_{2,t}') + x_t, \conv(B_1' \cup B_2') \bigr) 		
		\\ \le & \dist \bigl(\conv(B_{1,t}' \cup B_{2,t}') , \conv(B_1' \cup B_2') \bigr) + \|x_t\|
		\\ \le & \max \left\{ \dist(B_{1,t}',B_1'), \dist(B_{2,t}',B_2') \right\} + \|x_t\|
	\end{align*}
	One has
	\[
		\dist(B_{i,t}',B_i') = \dist(B_i -x_{i,t}, B_i) \le \|x_{i,t}\|.
	\]
	Thus, $\dist(B_t,B) \le \max \left\{ \|x_{1,t} \| , \|x_{2,t}\|\right\}  + \|x_t\|$, where the right hand side of this equality converges to $0$, as $t \rightarrow \infty$. We have shown that the maximal $\Lambda$-free set $B_t$, which has the unique-lifting property with respect to $\Lambda$, converges to the maximal $\Lambda$-free set $B$, as $t \rightarrow \infty$. By Theorem~\ref{thm:limit}, we conclude that $B$ has the unique-lifting property with respect to $\Lambda$.
\end{proof}

Theorem~\ref{coproduct thm} can be extended to a version dealing with the coproduct of $k \in \N$ sets. 

\begin{cor} 
	\label{coproduct of k sets cor}
Let $k \in \N$. For each $i \in \{1,\ldots,k\}$, let $\mu_i > 0$, let $n_i \in \N$, let $\Lambda_i$ be an affine lattice of rank $n_i$ in $\R^n_i$ and let $B_i$ be a $n_i$-dimensional polytope in $\R^{n_i}$ such that the origin of $\R^{n_i}$ is contained in $B_i$. Then, for  $B = \mu (B_1 \copr \ldots \copr B_k)$ with $\mu := \mu_1 + \cdots + \mu_k$ and $\Lambda := \mu_1 \Lambda_1 \times \cdots \times \mu_k \Lambda_k$, the following assertions hold:
\begin{enumerate}[(a)]
	\item If $B_i$ is $\Lambda_i$-free for each $i \in \{1,\ldots,k\}$, then $B$ is $\Lambda$-free.
	\item If $B_i$ is maximal $\Lambda_i$-free for each $i \in \{1,\ldots,k\}$, then $B$ is maximal $\Lambda$-free.
	\item If $B_i$ is maximal $\Lambda_i$-free and has the unique-lifting property with respect to $\Lambda_i$ for each $i \in\{1,\ldots,k\}$, then $B$ is maximal $\Lambda$-free and has the unique-lifting property with respect to $\Lambda$.
\end{enumerate}
\end{cor}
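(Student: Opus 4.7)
The plan is to proceed by induction on $k$, reducing the general case to the binary coproduct of Theorem~\ref{coproduct thm}. The base case $k=1$ is trivial: it reduces to the observation that each of the three properties (being $\Lambda_1$-free, being maximal $\Lambda_1$-free, and having the unique-lifting property with respect to $\Lambda_1$) is preserved under the simultaneous rescaling $B_1 \mapsto \mu_1 B_1$, $\Lambda_1 \mapsto \mu_1 \Lambda_1$, since interiors, strict inclusions, and lifting regions all commute with positive dilations of $\R^{n_1}$.

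For the inductive step, set $\mu' := \mu_1 + \cdots + \mu_{k-1}$, $\Lambda' := \mu_1 \Lambda_1 \times \cdots \times \mu_{k-1} \Lambda_{k-1}$, and $B' := \mu'(B_1 \copr \cdots \copr B_{k-1})$. The inductive hypothesis applied to the first $k-1$ sets yields whichever of the conclusions (a), (b), or (c) is required for $B'$ with respect to $\Lambda'$. Using associativity of the coproduct together with the scaling identity $\lambda(K_1 \copr K_2) = \lambda K_1 \copr \lambda K_2$, one rewrites
\[
    B \;=\; \mu(B_1 \copr \cdots \copr B_k) \;=\; \mu(B_1 \copr \cdots \copr B_{k-1}) \copr \mu B_k \;=\; (\mu/\mu')\, B' \copr \mu B_k.
\]
Both factors are full-dimensional polytopes containing their respective origins, so Theorem~\ref{coproduct thm} applies to this pair with parameter $\nu := \mu_k/\mu \in (0,1)$: the corresponding assertion of that theorem (using the $k=1$ scaling observation from the base case to transfer the properties of $B'$ and $B_k$ to $(\mu/\mu')B'$ and $\mu B_k$ with lattices $(\mu/\mu')\Lambda'$ and $\mu\Lambda_k$, respectively) gives that $B$ enjoys the desired property with respect to the affine lattice $(1-\nu)(\mu/\mu')\,\Lambda' \times \nu\mu\,\Lambda_k$. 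Since $1-\nu = \mu'/\mu$, this lattice simplifies to $\Lambda' \times \mu_k \Lambda_k = \mu_1 \Lambda_1 \times \cdots \times \mu_k \Lambda_k = \Lambda$, completing the induction.

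There is no genuine obstacle beyond bookkeeping; the only subtle point is ensuring the rescaling factors introduced at each step telescope to the correct weights in the final lattice. This is precisely why the inductive hypothesis is invoked on the \emph{prescaled} polytope $B' = \mu'(B_1 \copr \cdots \copr B_{k-1})$ paired with $\Lambda'$, rather than on the unscaled coproduct: when we subsequently rescale $B'$ by $\mu/\mu'$ and combine it via Theorem~\ref{coproduct thm}, the partial-sum factor $\mu'$ cancels, and the $i$-th block of the resulting lattice reduces to exactly $\mu_i \Lambda_i$ as required. The same induction covers (a), (b), and (c) uniformly, since each step invokes the matching assertion of Theorem~\ref{coproduct thm}.
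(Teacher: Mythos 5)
Your proof is correct and follows essentially the same route as the paper: induction on $k$ with the binary coproduct theorem (Theorem~\ref{coproduct thm}) supplying the inductive step and the trivial $k=1$ scaling observation as the base case. The paper states this in one line; your write-up simply fills in the bookkeeping (the identity $\lambda(K_1\copr K_2)=\lambda K_1\copr\lambda K_2$, the choice $\nu=\mu_k/\mu$, and the telescoping of the scaling factors so the lattice comes out as $\mu_1\Lambda_1\times\cdots\times\mu_k\Lambda_k$), all of which checks out.
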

\begin{proof}
	The assertion follows by induction, by using Theorem~\ref{coproduct thm} in the inductive step, with the basis case $k=1$ being trivial.
\end{proof}

The following is a simple reformulation of Corollary~\ref{coproduct of k sets cor} in a form which uses lattice-free sets rather than general $\Lambda$-free sets, where $\Lambda$ is a translate of an arbitrary lattice.

\begin{cor} \thmheader{Coproduct construction of various types of lattice-free sets}
	\label{coproduct lat free cor}
	Let $k \in \N$. For $i \in \{1,\ldots,k\}$, let $\mu_i>0$, let $n_i \in \N$, let $B_i$ be an $n_i$-dimensional polytope in $\R^{n_i}$ and let $c_i \in B_i$. Then, for the polytope
	\[
		B:= \frac{\mu (B_1-c_1)}{\mu_1} \copr \cdots \copr \frac{\mu (B_k-c_k)}{\mu_k} + (c_1,\ldots,c_k)
	\]
	with $\mu := \mu_1 + \cdots + \mu_k$, the following assertions hold:
	\begin{enumerate}[(a)]
		\item If $B_i$ is lattice-free for each $i \in \{1,\ldots,k\}$, then $B$ is lattice-free.
		\item If $B_i$ is maximal lattice-free for each $i \in \{1,\ldots,k\}$, then $B$ is maximal lattice-free.
		\item If $B_i$ is maximal lattice-free and has the unique-lifting property for each $i \in \{1,\ldots,k\}$, then $B$ is maximal lattice-free and has the unique-lifting property.
	\end{enumerate}
\end{cor}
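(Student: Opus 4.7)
The plan is to derive this corollary directly from Corollary \ref{coproduct of k sets cor} by a straightforward translation-and-rescaling step, recasting the ``origin-in-polytope with arbitrary affine lattice'' setup of the previous result into the ``arbitrary base point with the standard integer lattice'' language used here.

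For each $i \in \{1,\ldots,k\}$ I would define
\[
    B_i' := \frac{\mu}{\mu_i}(B_i - c_i), \qquad \Lambda_i := \frac{\mu}{\mu_i}(\Z^{n_i} - c_i).
\]
Since $c_i \in B_i$, the origin $o_i$ lies in $B_i'$, so $B_i'$ is an $n_i$-dimensional polytope containing $o_i$, and $\Lambda_i$ is an affine lattice of rank $n_i$. The three notions of interest (lattice-freeness, maximal lattice-freeness, and the unique-lifting property) are invariant under any invertible affine map that carries the ambient lattice to the ambient lattice: the interior is preserved by a homeomorphism, facet relative-interiors are preserved, and the gauge function simply rescales. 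Hence each hypothesis on $B_i$ relative to $\Z^{n_i}$ is equivalent to the corresponding hypothesis on $B_i'$ relative to $\Lambda_i$.

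Next, I would apply Corollary \ref{coproduct of k sets cor} to the data $(B_i', \Lambda_i)$ with weights $\mu_i' := \mu_i/\mu$, which satisfy $\mu' := \sum_{i=1}^k \mu_i' = 1$. The resulting polytope is
\[
    \mu'(B_1' \copr \cdots \copr B_k') = B_1' \copr \cdots \copr B_k' = \frac{\mu(B_1 - c_1)}{\mu_1} \copr \cdots \copr \frac{\mu(B_k - c_k)}{\mu_k},
\]
and the resulting affine lattice is
\[
    \Lambda' := \mu_1' \Lambda_1 \times \cdots \times \mu_k' \Lambda_k = (\Z^{n_1} - c_1) \times \cdots \times (\Z^{n_k} - c_k) = \Z^n - (c_1,\ldots,c_k),
\]
where $n := n_1 + \cdots + n_k$. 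Parts (a), (b), (c) of Corollary \ref{coproduct of k sets cor} yield, respectively, that this coproduct is $\Lambda'$-free, maximal $\Lambda'$-free, or maximal $\Lambda'$-free with the unique-lifting property with respect to $\Lambda'$.

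Finally, translation by $(c_1,\ldots,c_k)$ sends the coproduct to $B$ and sends $\Lambda'$ to $\Z^n$, so invariance of the three properties under translation gives (a), (b), (c). There is no genuine obstacle here beyond bookkeeping: the one algebraic check is that $\mu_i' \Lambda_i = (\mu_i/\mu) \cdot (\mu/\mu_i)(\Z^{n_i} - c_i) = \Z^{n_i} - c_i$, which is immediate.
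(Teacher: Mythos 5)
Your proof is correct and follows essentially the same route as the paper: both reduce to Corollary~\ref{coproduct of k sets cor} via affine normalization, using the invariance of lattice-freeness, maximality and the unique-lifting property under invertible affine maps that carry one ambient (affine) lattice to the other. The only difference is bookkeeping: you rescale each $B_i-c_i$ by $\mu/\mu_i$ and normalize the weights to $\mu_i/\mu$ \emph{before} invoking the corollary, so only a translation remains at the end, whereas the paper applies the corollary to $B_i-c_i$ with weights $\mu_i$ and undoes the scaling by an anisotropic affine map afterwards.
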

\begin{proof}
	For every of the three assertions the proof is based on the respective assertion of Corollary~\ref{coproduct of k sets cor}. We only prove (a), since the proofs of (b) and (c) are analogous. Assume that, for each $i \in \{1,\ldots,k\}$, the set $B_i$ is maximal lattice-free. Then
	$B_i-c_i$ is maximal $(\Z^{n_i} - c_i)$-free. The origin of $\R^{n_i}$ belongs to $B_i-c_i$. Thus, we can use Theorem~\ref{coproduct thm} for the sets $B_i-c_i$. We obtain that the set $\mu \bigl( (B_1 - c_1) \copr \cdots \copr (B_k - c_k) \bigr)$ is maximal $\mu_1 (\Z^{n_1} - c_1) \times \cdots \times \mu_k (\Z^{n_k} - c_k)$-free. Using the affine transformation that sends $(x_1,\ldots,x_k) \in \R^{n_1} \times \cdots \times \R^{n_k}$ to $(\frac{1}{\mu_1} x_1, \ldots, \frac{1}{\mu_k} x_k) + (c_1, \ldots, c_k)$, we conclude that the set $B$ is lattice-free.
\end{proof}

\paragraph{Pyramids and double pyramids.} Since pyramids and double pyramids can be described using the coproduct operation,  Corollary~\ref{coproduct lat free cor} can be used to construct pyramids and double pyramids which have the unique-lifting property. This is presented in the following corollary.

\begin{cor}
	Let $B$ be an $n$-dimensional polytope in $\R^n$. Let $c \in B$, let $0 \le \gamma < 1$ and let $0 < \mu < 1$. Then, for the polytope
	\begin{equation}\label{eq:pyr-construct}
		P :=  \conv \left( \frac{B- \mu c}{1-\mu} \times \{\gamma\} \cup \{c\} \times \Bigl[  \frac{(\mu-1)\gamma}{\mu} , \frac{(\mu-1)\gamma+1}{\mu} \Bigr]  \right) 
	\end{equation}
	(which is a pyramid if $\gamma=0$ and a double pyramid otherwise), the following assertions hold:
	\begin{enumerate}[(a)]
		\item If $B$ is lattice-free, then $P$ is lattice-free.
		\item If $B$ is maximal lattice-free, then $P$ is maximal lattice-free. 
		\item If $B$ has the unique-lifting property, then $P$ has the unique-lifting polytope. 
	\end{enumerate}
\end{cor}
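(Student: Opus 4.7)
The plan is to identify $P$ as precisely an instance of the coproduct construction from Corollary~\ref{coproduct lat free cor}, with $k=2$ summands, the first being $B$ itself and the second being the unit interval $[0,1]\subset\R$. Specifically, I would take $B_1 := B$, $c_1 := c$, $n_1 := n$; $B_2 := [0,1]$, $c_2 := \gamma$, $n_2 := 1$; and weights $\mu_1 := 1-\mu$, $\mu_2 := \mu$, so that $\mu_1+\mu_2 = 1$. A direct algebraic check gives
\[
\frac{B - c}{1 - \mu} + c \;=\; \frac{B - \mu c}{1 - \mu}, \qquad \frac{[0,1] - \gamma}{\mu} + \gamma \;=\; \left[\tfrac{(\mu-1)\gamma}{\mu},\, \tfrac{(\mu-1)\gamma + 1}{\mu}\right].
\]
Unwinding the definition of the coproduct and the translation by $(c_1,c_2) = (c,\gamma)$ appearing in Corollary~\ref{coproduct lat free cor}, the output of that construction coincides exactly with the polytope $P$ in \eqref{eq:pyr-construct}.

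With this identification in place, the three assertions about $P$ reduce to the corresponding assertions for $B_2 = [0,1]$: that it is lattice-free (immediate), maximal lattice-free (immediate, since its two facets $\{0\}$ and $\{1\}$ are integer points), and has the unique-lifting property. For the last of these, I would compute the lifting region directly: for any $f\in(0,1)$ one finds $S_{\{0\},0}(f) = [0,f]$ and $S_{\{1\},1}(f) = [f,1]$, so $R([0,1],f) = [0,1]$, which covers $\R$ modulo $\Z$; the geometric characterization of unique liftings from~\cite{bcccz} recalled in Section~\ref{sec:basic-notions} then yields the unique-lifting property of $[0,1]$. The conclusions (a), (b) and (c) for $P$ follow at once from parts (a), (b) and (c) of Corollary~\ref{coproduct lat free cor}, respectively.

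The one small point to flag is that in the ``pyramid'' case $\gamma = 0$, the parameter $c_2 = 0$ lies on the boundary of $B_2 = [0,1]$ rather than in its interior. This, however, poses no problem, since Corollary~\ref{coproduct lat free cor} only requires $c_i \in B_i$, so both the pyramid case ($\gamma = 0$) and the double pyramid case ($\gamma > 0$) are handled by the same instantiation without a separate argument. There is no genuine obstacle in this proof; all the real work has already been invested in setting up the coproduct framework in Theorem~\ref{coproduct thm} and its corollaries, so that the final statement becomes a clean specialization in which one of the coproduct factors is the trivial $1$-dimensional maximal lattice-free set $[0,1]$.
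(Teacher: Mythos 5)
Your proposal is correct and matches the paper's own proof: the paper likewise identifies $P = \frac{B-c}{1-\mu} \copr \frac{[0,1]-\gamma}{\mu} + (c,\gamma)$ and invokes Corollary~\ref{coproduct lat free cor} with the second factor $[0,1]$. Your additional explicit checks (the algebraic identification, the maximality and unique-lifting property of $[0,1]$, and the remark that $c_2=\gamma$ may lie on the boundary) are all accurate and simply spell out details the paper leaves to the reader.
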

\begin{proof}
	A straightforward computation shows that $P = \frac{B-c}{1-\mu} \copr \frac{[0,1]-\gamma}{\mu} + (c,\gamma)$, where $c \in B$ and $\gamma \in [0,1).$ Thus, the assertion follows directly from Corollary~\ref{coproduct lat free cor}.
\end{proof}

We can also use Corollary~\ref{coproduct lat free cor} to provide families of simplices and cross-polytopes having the unique-lifting property. 

\begin{cor}\label{cor:a1-an}
	Let $n \in \N$ and let $a_1,\ldots,a_n >0$ be such that $\frac{1}{a_1} + \cdots + \frac{1}{a_n} = 1$. Then the following assertions hold:
	\begin{enumerate}[(a)]
		\item The simplex $\conv \{o, a_1 e_1,\ldots,a_n e_n \}$ has the unique-lifting property.
		\item The cross-polytope $\conv \{ \pm \frac{a_1}{2} e_1,\ldots, \pm \frac{a_n}{2} e_n \} + (\frac{1}{2},\ldots, \frac{1}{2})$ has the unique-lifting property.
	\end{enumerate}
\end{cor}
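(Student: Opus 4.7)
My plan is to realize both polytopes as scaled and translated coproducts of $n$ identical one-dimensional factors, and then invoke Corollary~\ref{coproduct lat free cor}(c). The single one-dimensional building block I will use is the unit interval $B_i := [0,1] \subset \R$, which is the only maximal lattice-free set in $\R^1$ up to integer translation. I first need to check that $[0,1]$ has the unique-lifting property. For any $f \in (0,1)$, the only facets are $\{0\}$ and $\{1\}$, with unique integer points $0$ and $1$ respectively. A direct computation of the spindles gives $S_{\{0\},0}(f) = [0,f]$ and $S_{\{1\},1}(f) = [f,1]$, so $R([0,1],f) = [0,1]$ and hence $R([0,1],f) + \Z = \R$; by the geometric characterization recalled in Section~\ref{sec:basic-notions}, $[0,1]$ has the unique-lifting property.

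For part~(a), I apply Corollary~\ref{coproduct lat free cor} with $k=n$, $n_i=1$, $B_i = [0,1]$, $c_i = 0$, and $\mu_i = 1/a_i$. The hypothesis $\sum_{i=1}^n 1/a_i = 1$ gives $\mu = \mu_1 + \cdots + \mu_n = 1$, so $\frac{\mu(B_i - c_i)}{\mu_i} = a_i[0,1] = [0,a_i]$ and the translation $(c_1,\ldots,c_n)$ vanishes. A short induction using associativity of $\copr$ and the base identity $[0,a_1] \copr [0,a_2] = \conv\{o, a_1 e_1, a_2 e_2\}$ (immediate from the definition of $\copr$) shows $[0,a_1] \copr \cdots \copr [0,a_n] = \conv\{o, a_1 e_1, \ldots, a_n e_n\}$; Corollary~\ref{coproduct lat free cor}(c) then yields the unique-lifting property for this simplex. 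For part~(b), I reuse the same setup but with $c_i = 1/2 \in \intr([0,1])$. Then $\frac{\mu(B_i - c_i)}{\mu_i} = a_i([0,1] - \tfrac12) = [-a_i/2, a_i/2]$, the translation becomes $(1/2,\ldots,1/2)$, and a parallel induction gives $[-a_1/2, a_1/2] \copr \cdots \copr [-a_n/2, a_n/2] = \conv\{\pm \tfrac{a_i}{2} e_i : i=1,\ldots,n\}$, which, after translation by $(1/2,\ldots,1/2)$, is precisely the polytope in~(b).

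There is no real obstacle: the whole argument is a matter of matching the two polytopes to the output of the coproduct construction and verifying that the hypothesis $\sum_i 1/a_i = 1$ is exactly what makes the scaling factor $\mu$ equal to $1$. The only non-mechanical step is the one-dimensional verification of the unique-lifting property of $[0,1]$, which involves only the definitions of $S_{F,z}(f)$ and $R(B,f)$. All substantive content is carried by Corollary~\ref{coproduct lat free cor}, which itself rests on Theorem~\ref{coproduct thm} and the topological closure result Theorem~\ref{thm:limit}.
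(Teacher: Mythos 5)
Your proof is correct and follows essentially the same route as the paper: both apply Corollary~\ref{coproduct lat free cor} with $k=n$, $B_i=[0,1]$, $\mu_i=1/a_i$, and $c_i=0$ (resp.\ $c_i=\tfrac12$) for the simplex (resp.\ cross-polytope). You merely spell out details the paper leaves implicit, namely the one-dimensional unique-lifting verification for $[0,1]$ and the identification of the resulting coproduct with the stated polytopes, both of which you carry out correctly.
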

\begin{proof}
	For both assertions we use Corollary~\ref{coproduct lat free cor} with $k=n$, $B_i = [0,1]$ and $\mu_i = \frac{1}{a_i}$ for every $i$. For assertion (a) we choose $c_1 = \cdots = c_n = 0$, while for assertion (b) we choose $c_1 = \cdots = c_n = \frac{1}{2}$.
\end{proof}

\begin{remark}
As mentioned in the Introduction, without loss of generality we only deal with polytopes in this paper because maximal lattice-free polyhedra have a recession cone which is a rational linear subspace, and so the lifting region is a cylinder over the lifting region of a polytope. To use the coproduct operation on such unbounded maximal lattice-free polyhedra, one would use the operation on the polytopes - after removing the linearity spaces of the polyhedra - and then add back the direct sum of these two linear subspaces. 
\end{remark}

\paragraph{Comparison with existing results on the unique-lifting property.}
All known classes of maximal lattice-free polytopes with the unique-lifting property from the literature can be constructed using the formula~\eqref{eq:pyr-construct}. For simplices with the unique-lifting property:
\begin{itemize}
\item Setting $a_1 = a_2 = \ldots = a_n = n$ gives the so-called Type 1 triangle (for $n=2$) and its higher-dimensional generalizations that were first shown to have the unique-lifting property in~\cite{ccz} and~\cite{bcm}. 
\item All the results on 2-partitionable simplices from Section 4 in~\cite{bcm} can be derived using~\eqref{eq:pyr-construct}.
\end{itemize}
 
Of course, \eqref{eq:pyr-construct} can be used to create pyramids that are not simplices (for example, by creating a $n$-dimensional pyramid over a 2-dimensional quadrilateral with unique-lifting), and so it is a much more powerful and general construction compared to existing constructions for simplices with unique lifting. Similarly, the cross-polytope construction in Corollary~\ref{cor:a1-an} (b) for $n=2$ gives precisely the quadrilaterals with unique-lifting property. Further, {\em every} maximal lattice-free polytope for $n=2$ can be obtained using the coproduct construction~\eqref{eq:pyr-construct}. 

In summary, the coproduct construction can be used to obtain every previously known maximal lattice-free polytope with the unique-lifting property, and gives a very general way to obtain new unique-lifting polytopes in higher dimensions.

\paragraph{The cube construction.} One may wonder, in the light of previous remarks, whether for a given dimension $n \ge 2$ the coproduct construction generates all unique-lifting polytopes in $\R^n$. We saw that for $n =2$ this is indeed the case. Below we describe a construction, which shows that for infinitely many choices of $n$, there exist unique-lifting polytopes which cannot be generated using the coproduct construction. First, we observe that nonsingular affine transformations of cubes of dimension at least three are not coproducts of any polytopes:

\begin{prop}\label{prop:coproduct-nogo} Let $n \in \N$ and $n \ge 3$. Let $B \subseteq [0,1]^n$ an image of the $n$-dimensional cube under a nonsingular affine transformation. Then $B$ is not representable as $P_1 \copr P_2$, where $P_i$ is an $n_i$-dimensional polytope in $\R^{n_i}$ and $n_i \in \N$ for each $i \in \{1,2\}$.
\end{prop}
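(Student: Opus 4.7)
My plan is to locate a triangular $2$-face inside any coproduct $P_1 \copr P_2$ with $n_1+n_2 \geq 3$ and $n_1,n_2 \ge 1$, which will contradict the fact that every $2$-face of an affine image of the $n$-cube is a parallelogram.

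Assume $B = P_1 \copr P_2$ as in the statement, with $n_1+n_2 = n \ge 3$. Then at least one of $n_1,n_2$ is $\ge 2$; without loss of generality $n_1 \ge 2$. The central combinatorial step is to produce an edge $F_1 \in \cF^1(P_1)$ with $o_1 \notin F_1$. I would do this by a short case analysis on the minimal face $F_{o_1}$ of $P_1$ whose relative interior contains $o_1$. If $\dim F_{o_1} \ge 2$, no edge of $P_1$ contains $o_1$, and $P_1$ has edges because $\dim P_1 \ge 2$. If $\dim F_{o_1} = 1$, then $F_{o_1}$ is the \emph{unique} edge through $o_1$, and any of the at least two other edges of $P_1$ works (there are at least three edges because $P_1$ has a $2$-face, which is a polygon). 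If $o_1$ is a vertex of $P_1$, pick any other vertex $v$; since $\dim P_1 \ge 2$, the vertex $v$ has at least two neighbors in the $1$-skeleton of $P_1$, at most one of which is $o_1$, yielding a neighbor $w \ne o_1$ and an edge $\{v,w\}$ disjoint from $o_1$.

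Next, since $\dim P_2 \ge 1$, the polytope $P_2$ has at least two vertices, so I can select a vertex $v_2 \ne o_2$. Now by Lemma~\ref{faces coproduct lem}(a), case \eqref{two faces without} (using $o_1 \notin F_1$ and $o_2 \ne v_2$), the set $G := F_1 \copr \{v_2\}$ is a face of $B$; by part (b) its dimension equals $\dim F_1 + \dim\{v_2\} + 1 = 2$. Unpacking the coproduct definition,
\[
G \;=\; \conv\bigl( (F_1 \times \{o_2\}) \cup \{(o_1,v_2)\} \bigr),
\]
the convex hull of the segment $F_1 \times \{o_2\}$ and the point $(o_1,v_2)$. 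Since $v_2 \ne o_2$, the point $(o_1,v_2)$ does not lie in $\R^{n_1} \times \{o_2\} = \aff(F_1 \times \{o_2\})$, so $G$ is a triangle.

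To conclude, every $2$-face of $[0,1]^n$ is a square, and hence every $2$-face of the affine image $B$ is a parallelogram with four vertices; but the face $G$ has only three vertices, a contradiction. The main obstacle is the edge-existence argument in the first step, but it is routine; everything afterwards is a direct invocation of Lemma~\ref{faces coproduct lem}.
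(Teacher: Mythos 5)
Your proof is correct, but it follows a genuinely different route from the paper's. The paper also starts from Lemma~\ref{faces coproduct lem}, but it picks facets $F_i$ of $P_i$ with $o_i \notin F_i$, forms the facet $F = F_1 \copr F_2$ of $B$, and counts \emph{its} facets: $2(n_1-1)+2(n_2-1)=2(n-2)$ when $n_1,n_2 \ge 2$, and $2(n-2)+1$ in the pyramid case $n_2=1$, contradicting the $2(n-1)$ facets that a nonsingular affine image of an $(n-1)$-cube must have. You instead exhibit a single triangular $2$-face, namely an edge-$\copr$-vertex face via case \eqref{two faces without}, and contradict the fact that every $2$-face of an affine cube is a parallelogram. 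Your version buys a cleaner and slightly stronger statement -- it shows that for $n\ge 3$ a coproduct of positive-dimensional polytopes always has a triangular $2$-face, so it cannot be any polytope with centrally symmetric $2$-faces (e.g., any zonotope), not just a cube -- and it avoids the paper's separate treatment of the case $n_2=1$; the price is your three-case search for an edge of $P_1$ avoiding $o_1$, which could be compressed to the paper's one-line device (take a facet of $P_1$ not containing $o_1$ and then any edge of that facet). One shared caveat: both your argument and the paper's invoke Lemma~\ref{faces coproduct lem}, whose hypotheses include $o_i \in P_i$, while the proposition as stated does not explicitly assume this; since the paper's own proof makes the same implicit assumption (it is the setting in which the coproduct construction is used), this is not a gap in your proposal relative to the paper.
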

\begin{proof}
	Assume the contrary, that is, $B = P_1 \copr P_2$. For each $i \in \{1,2\}$ choose a facet $F_i$ of $P_i$ with $o_i \not\in F_i$, where $o_i$ denotes the origin of $\R^{n_i}$. By Lemma~\ref{faces coproduct lem}, the polytopes $F = F_1 \copr F_2$ is a facet of $B$, while the polytopes $F_1 \times \{o_2\}$ and $\{o_1\} \times F_2$ are faces of $B$. All faces of $B$ are nonsingular affine images of cubes of dimensions at most $n$. A cube of dimension $k \in \N$ has $2 k$ facets. Thus, $F_i$ has $2 (n_i-1)$ facets. In the case that $n_i \ge 2$ for each $i \in \{1,2\}$, the set $\cF^{n-2}(F)$ of all facets of $F$ is precisely the set of polytopes of the form $F_1 \copr G_2$ and $G_1 \copr F_2$, where $G_i$ is a facet of $F_i$. Consequently, $F$ has $2(n_1-1) + 2(n_2-1) = 2 (n - 2)$ facets. On the other hand, since $F$ is a facet of $B$ and by this an nonsingular affine image of an $(n-1)$-dimensional cube, $F$ has $2(n-1)$ facets, which is a contradiction. We switch to the case that $n_i=1$ for some $i \in \{1,2\}$. Without loss of generality, let $n_2=1$. In this case $F_2$ is $0$-dimensional and thus $F$ is a pyramid with the base $F_1 \times \{o_2\}$, which is a polytope with $2 (n_1-1)$ facets, and the apex $\{o_1\} \times F_2$. It follows that $F$ has $2 (n_1-1) + 1 = 2(n-2) + 1$ facets, which is a contradiction to the fact that $F$ has $2(n-1)$ facets.
\end{proof}

\begin{prop}\label{prop:cube-const}
Let $n \in \N$ be odd and $n \ge 3$. Let $\Lambda$ be the lattice of rank $n$ in $\R^n$ given by 
\[
	\Lambda := \setcond{ z = (z_1,\ldots,z_n) \in \Z^n}{z_1 + \cdots + z_n \ \text{is even}}.
\]
Then the cube $B:=[0,2]^n$ is a maximal $\Lambda$-free polytope with the unique-lifting property with respect to $\Lambda$. 
\end{prop}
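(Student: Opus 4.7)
The plan is three-fold: verify directly that $B=[0,2]^n$ is maximal $\Lambda$-free, then establish the unique-lifting property at the convenient basepoint $f_0:=(1,\ldots,1)$ by a volume computation, and finally invoke the $\Lambda$-analogue of the invariance theorem (Theorem~\ref{inv:thm}) to extend the conclusion to all of $\intr(B)$.

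For maximality, the only integer point in $\intr([0,2]^n)$ is $f_0=(1,\ldots,1)$, whose coordinate sum $n$ is odd, so $f_0\not\in\Lambda$; hence $B$ is $\Lambda$-free. Each facet $F_i^\pm=\{x\in B:x_i\in\{0,2\}\}$ contains in its relative interior the point having one coordinate in $\{0,2\}$ and all others equal to $1$; this point has even coordinate sum (since $n$ is odd) and lies in $\Lambda$. Lov\'asz's characterization (Theorem~\ref{lovasz thm}) then gives maximal $\Lambda$-freeness.

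For the unique-lifting property at $f_0$, two observations drive the argument. First, in the relative interior of each facet $F$ of $B$ there is a unique $\Lambda$-point $z_F$, namely the center of $F$; all other $\Lambda$-points on $F$ lie on proper subfaces of $F$, which makes $(f_0+z)/2$ a boundary point of the pyramid $\conv(\{f_0\}\cup F)$ and therefore forces $S_{F,z}(f_0)$ to have dimension less than $n$, negligible for volume. Second, a direct parameterization identifies $\conv(\{f_0\}\cup F_n^+)$ with $\{y:y_n\in[1,2],\ |y_j-1|\le y_n-1\text{ for }j<n\}$, so the spindle $S_{F_n^+,z_{F_n^+}}(f_0)$ is the explicit bipyramid $\{y:y_n\in[1,2],\ |y_j-1|\le\min(y_n-1,2-y_n)\text{ for }j<n\}$ with waist $[1/2,3/2]^{n-1}$ at $y_n=3/2$; each of its two pyramidal halves has base area $1$ and height $\tfrac12$, so the spindle has volume $\tfrac1n$. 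By the symmetry of the cube (coordinate permutations and the reflections $x_i\mapsto 2-x_i$ all preserve $\Lambda$), each of the $2n$ facets contributes a full-dimensional spindle of volume $\tfrac1n$, for a total of $2=\det(\Lambda)$.

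The main obstacle is upgrading ``total spindle volume equals $\det(\Lambda)$'' to the covering relation $R(B,f_0)+\Lambda=\R^n$, which demands that collisions modulo $\Lambda$ be negligible. Within a single full-dimensional spindle the coordinate $y_i$ sits in $[0,1]$ or $[1,2]$ while each $y_j$ with $j\ne i$ sits in $[1/2,3/2]$; hence any two interior points differ by strictly less than $1$ in every coordinate and cannot be congruent modulo $\Z^n\supseteq\Lambda$. For interior points belonging to spindles of two distinct facets, the proof of Lemma~\ref{when collision?} goes through with $\Lambda$ in place of $\Z^n$ (the only fact used is that the auxiliary point $z_2+x_1-x_2$ remains in $\Lambda$, combined with $\Lambda$-freeness of $B$), forcing $F_1=F_2$. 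Lemma~\ref{lem:torus-vol} then yields $\vol(R(B,f_0)/\Lambda)=2=\det(\Lambda)$, so $R(B,f_0)+\Lambda=\R^n$. Since Theorem~\ref{thm:affine} and Corollary~\ref{cor:face} apply verbatim to any affine lattice of rank $n$ in place of $\Z^n$, the unique-lifting property extends from $f_0$ to every $f\in\intr(B)$.
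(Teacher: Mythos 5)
Your proposal is correct and follows essentially the same route as the paper: verify maximality via Lov\'asz's criterion, compute at $f_0=(1,\ldots,1)$ that each of the $2n$ facets contributes a full-dimensional spindle of volume $\tfrac1n$ with no congruences modulo $\Lambda$, conclude $\vol(R/\Lambda)=2=\det(\Lambda)$, and invoke the lattice analogue of the invariance machinery. The only differences are cosmetic: you rule out self-collisions inside a spindle by an elementary coordinate bound rather than by \eqref{struct congr:eq}, and you spell out explicitly that spindles attached to non-central $\Lambda$-points of a facet are lower-dimensional, a point the paper leaves implicit.
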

\begin{proof}
Clearly, $B$ is $\Lambda$-free because the only integer point in the relative interior of $B$ is $(1,\ldots,1) \in \R^n$. Since the dimension $n$ is odd, this point is not in $\Lambda$. The cube $B$ is even maximal $\Lambda$-free, because in the relative interior of every facet of $B$ one can find a point of $\Lambda$ with one component equal to $0$ or $2$ and the remaining components equal to $1$. For the verification of the unique-lifting property we choose $f:=(1,\ldots,1)$ and test whether $R_\Lambda(B,f) + \Lambda = \R^n$. Let $D$ be the fundamental parallelepiped for $\Lambda$, i.e., every point in $\R^n$ is uniquely representable as a point from $\Lambda$ and a point from $D$. Define $R_\Lambda(B,f)/ \Lambda = \{p \in D: (p + \Lambda) \cap R_\Lambda(B,f) \neq \emptyset\}$. Then $R_\Lambda(B,f) + \Lambda = \R^n$ if and only if $\vol(R_\Lambda(B,f)/ \Lambda) = \vol(D) = \det(\Lambda) = 2$. Consider the spindle $S:=S_{F,z}(f)$ associated with the facet $F :=[0,2]^{n-1} \times \{0\}$ and the lattice point $z:=(1,\ldots,1,0) \in \Lambda$, where $z$ is the only point of $\Lambda$ in the relative interior of $F$. Furthermore, by symmetry reasons, the parts of $R_\Lambda(B,f)$ associated to the remaining facets of $B$ the same properties with respect to $\Lambda$ as our fixed facet $F$. It follows, taking into account \eqref{vol is sum of facet parts:eq}, that $\vol(R_\Lambda(B,f) / \Lambda) = 2 n \vol(S / \Lambda)$, where $2n$ is the number of facets of $B$. 
We have $\relintr(F) \cap \Lambda = \{z\}$. Thus, in view of \eqref{struct congr:eq}, 
 the interior of $S$ does not contain distinct points congruent modulo $\Lambda$ (note that we apply \eqref{struct congr:eq} with $\Lambda$ in place of $\Z^n$). Thus, one has $\vol(S / \Lambda) = \vol(S)$. Note that, since $F$ is centrally symmetric with center at $z$, we see that $S$ is a double pyramid, which can be represented by $S_{F,z}(f) = \conv \{ [\frac{1}{2},\frac{3}{2}]^{n-1} \times \{\frac{1}{2}\} \cup \{z,f\})$, where $z,f$ are the apexes of $S$ and the set $[\frac{1}{2},\frac{3}{2}]^{n-1} \times \{\frac{1}{2}\}$ is the base of $S$. Using the standard formula for the volume of double pyramids we obtain $\vol(S) = \frac{1}{n}$. Thus, $\vol(R_\Lambda(B,f) / \Lambda) = 2 = \det(\Lambda)$. It follows that $B$ has the unique-lifting property with respect to $\Lambda$.
\end{proof}

We remark that in the case of dimension $n=3$, the polytope $B$ discussed in the above
proposition can be found in~\cite{MR2855866}, where the authors use the standard lattice
$\Z^3$ rather than $\Lambda$. The respective polytope is written as
$[o,e_1+ e_2] + [o,-e_1 + e_2] + [o,e_1 + e_2 + 2 e_3]$ rather than
$[0,2]^3$. To verify the equivalence with the example of the previous
proposition it suffices to check that the linear mapping sending
$e_1+e_2$ to $2 e_1$, $-e_1 + e_2$ to $2 e_2$ and $e_1 + e_2 + 2 e_3$ to $2 e_3$,
which maps bijectively the mentioned polytope from~\cite{MR2855866} to the
cube $[0,2]^3$, is also a bijection between $\Z^3$ and $\Lambda$. Thus, $B$ is a maximal lattice-free set which has the unique lifting property, but is not the representable as a coproduct by Proposition~\ref{prop:coproduct-nogo}.

\section{Characterization of special polytopes with the unique-lifting property}

\label{spec pyr sect}

\paragraph{Towards explicit description of polytopes with the unique-lifting property.} Providing an explicit description of all $n$-dimensional maximal lattice-free polytopes with the unique-lifting property for $n \ge 2$ is a challenging problem: so far, only the case $n=2$ has been settled completely. Already the case $n=3$ seems to be highly nontrivial. In the authors' opinion, one of the difficulties is that, in general, the set $\intr(R_F(B,f))$ with $f \in B \setminus F$ has complicated geometry whenever the relative interior of a facet $F$ of an $n$-dimensional maximal lattice-free polytope $B$ contains more than one integral point. It is thus interesting to analyze the somewhat more accessible special case in which $\relintr(F) \cap \Z^n$ consists of exactly one point for each facet $F$ of $B$. 
In this section we provide some partial information on the problem described above. 

We say that a closed set $S\subseteq \R^n$ with nonempty interior {\em translatively tiles $\R^n$} if $\R^n$ can be represented as $\bigcup_{u \in U} (S+u)$, where $U \subseteq \R^n$ and $\intr(S + u_1) \cap \intr(S + u_2) \ne \emptyset$ for all $u_1,u_2 \in U$ with $u_1 \ne u_2$. We say that $S$ tiles $\R^n$ by its integral translations if the latter condition holds with $U = \Z^n$. 

\begin{prop} \label{special situation thm}
	Let $n \in \N$ and let $B$ be a maximal lattice-free set in $\R^n$ such that the relative interior of each facet of $B$ contains exactly one integral point. Then $\vol(R(B,f)) = \vol(R(B,f) / \Z^n)$ and therefore, the following are equivalent: (a) $B$ has the unique-lifting property, (b) $\vol(R(B,f))=1$ for every $f \in B$ and (c) the topological closure of $\intr(R(B,f))$ tiles $\R^n$ by its integral translations for every $f \in B$.
	
\end{prop}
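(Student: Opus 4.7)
The plan is to derive everything from Lemma~\ref{when collision?} specialized to the present setting. Under the hypothesis, each facet $F \in \cF^{n-1}(B)$ contains a unique integer point $z_F$ in its relative interior, so $R(B,f) = \bigcup_{F \in \cF^{n-1}(B)} S_{F,z_F}(f)$. The central claim is that no two distinct points of $\intr(R(B,f))$ are congruent modulo $\Z^n$. Indeed, suppose $x_1 \in \intr(S_{F_1,z_{F_1}}(f))$ and $x_2 \in \intr(S_{F_2,z_{F_2}}(f))$ satisfy $x_1 - x_2 \in \Z^n$. By Lemma~\ref{when collision?}, one has $F_1 = F_2$ and $x_1 - x_2$ is the difference of two integer points in $\relintr(F_1)$; since $\relintr(F_1) \cap \Z^n = \{z_{F_1}\}$, this difference must be zero, so $x_1 = x_2$.

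Next I would apply Lemma~\ref{lem:torus-vol} with $R := R(B,f)$. The integrand $|R \cap (x + \Z^n)|^{-1}$ equals $1$ at every $x$ lying in the interior of some piece $S_{F,z_F}(f)$, and the set of such $x$ differs from $R$ by a measure-zero set (the union of the boundaries of the finitely many pieces). Hence the integrand is $1$ almost everywhere on $R$, yielding the identity $\vol(R(B,f)/\Z^n) = \vol(R(B,f))$, which is the first assertion of the proposition.

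For the equivalences, note first that (a) holds iff $R(B,f) + \Z^n = \R^n$ for $f \in \intr(B)$ and, since $R(B,f)$ is compact, iff $\vol(R(B,f)/\Z^n) = 1$ for some (hence every) such $f$. Combining the volume identity with Corollary~\ref{cor:face}, the set $\{f \in B : \vol(R(B,f)) = 1\}$ is a face of $B$; it contains some (hence every) interior point of $B$ precisely when $B$ has the unique-lifting property, in which case this face must be all of $B$. This yields (a) $\Leftrightarrow$ (b). For (c), write $S := \overline{\intr(R(B,f))}$. The interior-disjointness of the translates $S + w$, $w \in \Z^n$, is immediate from the no-collision property established above. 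Since $S + \Z^n$ is closed (as a locally finite union of translates of a compact set), the covering condition $S + \Z^n = \R^n$ is equivalent to its complement (an open set) having measure zero, hence to $\vol(S/\Z^n) = 1$, which by the volume identity is $\vol(R(B,f)) = 1$. This gives (b) $\Leftrightarrow$ (c).

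The main subtlety to watch for is the behavior at boundary points $f \in \partial B$: facets of $B$ that contain $f$ give rise to degenerate pieces $S_{F,z_F}(f)$ of lower dimension, so $R(B,f)$ and $\overline{\intr(R(B,f))}$ can differ on a measure-zero set. This does not affect either of the volumes in the identity, and is smoothly absorbed by Corollary~\ref{cor:face}, which propagates the critical equality from $\intr(B)$, where all arguments are transparent, to every $f \in B$.
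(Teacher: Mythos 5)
Your proof is correct in substance and follows essentially the same route as the paper: Lemma~\ref{when collision?} combined with the hypothesis that $\relintr(F)\cap\Z^n$ is a singleton rules out congruences modulo $\Z^n$ between (almost all) distinct interior points of the lifting region, which yields $\vol(R(B,f))=\vol(R(B,f)/\Z^n)\le 1$; the equivalences then follow from the covering characterization of unique lifting together with Corollary~\ref{cor:face}. The paper compresses this last step into a single sentence, so your explicit treatment of (a)~$\Leftrightarrow$~(b)~$\Leftrightarrow$~(c), and your use of Lemma~\ref{lem:torus-vol}, is if anything more detailed than the original.

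One statement needs repair, although it does not affect the outcome. The hypothesis only controls $\relintr(F)\cap\Z^n$; a facet $F$ may contain further integral points on its relative boundary (e.g.\ the vertices of $\conv\{o,ne_1,\ldots,ne_n\}$), and by definition $R_F(f)$ contains the spindles $S_{F,z}(f)$ for \emph{all} $z\in F\cap\Z^n$. So the asserted set equality $R(B,f)=\bigcup_F S_{F,z_F}(f)$ is false in general. The patch: if $z\in F\setminus\relintr(F)$, then $S_{F,z}(f)$ has empty interior, since $S_{F,z}(f)$ is full-dimensional if and only if $(z+f)/2\in\intr\bigl(\conv(\{f\}\cup F)\bigr)$, which for $f\notin\aff(F)$ happens if and only if $z\in\relintr(F)$. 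Hence these extra spindles are null sets, they do not change $\intr(R(B,f))$, and your almost-everywhere evaluation of the integrand in Lemma~\ref{lem:torus-vol} survives. For the same reason, your pointwise claim that the integrand equals $1$ at \emph{every} interior point of a central spindle should be weakened to ``at almost every point of $R(B,f)$'': a congruent copy of such a point could a priori lie on the boundary of another spindle or in a degenerate spindle, a situation not excluded by Lemma~\ref{when collision?}, but the set of such points is null, which is all the volume identity requires.
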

\begin{proof}
	Consider an arbitrary facet $F$ of $B$ and an arbitrary $f \in B$.
	By Lemma~\ref{when collision?}, no point of $\intr(R_F(B,f))$ is congruent to a point of another set $\intr(R_{F'}(B,f))$ modulo $\Z^n$, where $F'$ is a facet of $B$ with $F' \ne F$. Furthermore, taking into account the assumption on the facets of $B$, \eqref{struct congr:eq} implies that no two distinct points of $\intr(R(B,f))$ are congruent modulo $\Z^n$. Hence $\vol(R(B,f)) = \vol(R(B,f) / \Z^n) \le 1$. The latter implies the assertions (a), (b) and (c). \end{proof}

Thus, in our special situation, for verifying whether $B$ has the unique-lifting property, it suffices to compute the volume of the entire lifting region $R(B,f)$ rather than the set $R(B,f)/\Z^n$, which is a simplification. Nevertheless, since $R(B,f)$ is still quite a complicated set (e.g., not necessarily a convex one), checking $\vol(R(B,f))=1$ is not an easy task. 

\paragraph{Special pyramids with the unique-lifting property.}\label{sec:pyr-unique}

We analyze and partially characterize pyramids with the unique-lifting property. We shall use the following theorem, which is proved in the appendix.

\begin{theorem}\label{thm:trans-spindle} \thmheader{McMullen~\cite{mcmullen-email}}
Let $S\subseteq \R^n$ be an $n$-dimensional spindle that translatively tiles space.  Then $S$ is the image of the $n$-dimensional hypercube under an invertible affine transformation.
\end{theorem}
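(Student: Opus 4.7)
The plan is to apply the Venkov--Alexandrov--McMullen (V-A-M) classification of translative tiles: an $n$-dimensional convex body tiles $\R^n$ by translation if and only if it is a centrally symmetric polytope, all of whose facets are centrally symmetric, and such that every belt (the set of facets containing a given $(n-2)$-face together with those parallel to it) contains exactly $4$ or $6$ facets. Applying this to $S$, I first pin down the center. Since $S = (P_1+a_1) \cap (P_2+a_2)$ has $a_1$ and $a_2$ as the unique vertices whose tangent cones contain $a_2-a_1$ and $a_1-a_2$ in their relative interiors, and since the central symmetry $x \mapsto 2c-x$ must permute the vertex set while negating tangent cones, this symmetry must swap $a_1$ and $a_2$. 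Consequently $c = (a_1+a_2)/2$ and $P_2 = -P_1$. Note also that no facet of $S$ can contain both apexes, since that would force $a_2-a_1$ onto a facet of $P_1$, contradicting $a_2-a_1 \in \relintr(P_1)$.

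Next, I aim to show that $S$ is a zonotope by verifying McMullen's zonotope characterization: a polytope is a zonotope if and only if all its $2$-faces are centrally symmetric. For a $2$-face $F$ of $S$ there are two cases. If $F$ contains both apexes, then $F$ is itself a $2$-dimensional spindle (hence a quadrilateral) which is centrally symmetric by V-A-M, so $F$ is a parallelogram. If $F$ contains only one apex, say $a_1$, then $F$ lies in two centrally symmetric facets of $S$; I argue via the belt condition applied to an appropriate $(n-2)$-face across from $a_1$ that any additional boundary vertex of $F$ opposite to $a_1$ would push the belt size above $6$, so $F$ has exactly four vertices, and central symmetry of the two enclosing facets forces those four vertices to form a parallelogram.

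Once $S$ is a zonotope, we may write $S = \sum_{i=1}^k [o,v_i] + t$ for vectors $v_1,\ldots,v_k$ and a translation $t$. In a zonotope, the tangent cone at any vertex is generated by a signed subset of the $v_i$, so the pointedness of $P_1$ requires those generators to be linearly independent. Combined with $\dim S = n$, this forces $k = n$, making $S$ a parallelepiped, equivalently the image of the unit cube $[0,1]^n$ under an invertible affine transformation.

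The main obstacle will be the $2$-face analysis in the zonotope step, in particular for $2$-faces containing only a single apex, where the spindle geometry and the belt condition must interact precisely. This is the place where the proof genuinely uses both principal tools (V-A-M and the zonotope characterization) rather than merely quoting them; the low-dimensional cases can be verified by hand, but in general dimension, extracting the parallelogram structure of these $2$-faces is the conceptual heart of the argument.
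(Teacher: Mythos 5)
Your skeleton (apply Venkov--Alexandrov--McMullen, deduce that $S$ is a zonotope by showing all $2$-faces are centrally symmetric, then conclude that $S$ is an affine cube) is close in spirit to the paper, but the two steps that carry the real weight do not hold as written. First, the $2$-face analysis is both partly vacuous and incomplete. For $n\ge 3$ no proper face of $S$ contains both apexes (as you yourself observe for facets), so your first case never occurs -- and in any event ``centrally symmetric by V-A-M'' is not a legitimate inference for a face, since faces of a translative tile need not themselves tile. More seriously, for $n\ge 4$ there are $2$-faces of a spindle containing \emph{neither} apex (already the cube, which is a spindle, has such $2$-faces), and your case analysis omits them entirely; the belt machinery lives at codimension $2$, not dimension $2$, so ``an appropriate $(n-2)$-face across from $a_1$'' gives you no handle on these faces. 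The paper gets around exactly this difficulty by proving central symmetry of all $(n-2)$-faces (via a reflection argument through the centers of the two centrally symmetric facets containing a given $(n-2)$-face, using that every belt of a spindle has length exactly $4$ -- a fact it derives from the spindle structure by projecting onto the plane orthogonal to the $(n-2)$-face, not from the V-A-M belt bound), and then invokes McMullen's theorem that central symmetry of all $(n-2)$-faces forces central symmetry of \emph{all} faces when $n\ge 4$; only then does the $2$-face criterion for zonotopes apply.

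Second, your concluding step is incorrect as reasoned: the tangent cone at \emph{any} vertex of \emph{any} zonotope is pointed, so pointedness of $P_1$ cannot force the generators to be linearly independent or force $k=n$ (the regular hexagon, or the rhombic dodecahedron with its pointed but non-simplicial vertex cones, shows the inference ``pointed $\Rightarrow$ independent generators'' fails; redundant generators are simply absorbed into the cone). What actually closes the argument, and what the paper uses, is the belt count: in a spindle every belt has exactly $4$ facets, because within a belt the facets through $a_1$ project, under the orthogonal projection killing the $(n-2)$-face, to distinct edges of a polygon all passing through the projection of $a_1$, so there are at most two of them (and likewise for $a_2$); and a zonotope all of whose belts have length $4$ is an affine image of the cube (this is a separate lemma the paper proves about zones of a zonotope). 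If you replace your final step by this belt-length argument, and replace the direct $2$-face analysis by the $(n-2)$-face argument plus McMullen's symmetry-propagation theorem, you recover a complete proof; as it stands, both the apex-free $2$-faces and the passage from ``zonotope that is a spindle'' to ``parallelepiped'' are genuine gaps.
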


Theorem~\ref{thm:trans-spindle} can be used to prove the following.

\begin{theorem}\label{thm:base-condition} Let $n \in \N$ and let $B\subseteq \R^n$ be a maximal lattice-free polytope with the unique-lifting property such that $B$ is a pyramid whose base contains exactly one integral point in the relative interior. Then $B$ is a simplex.
\end{theorem}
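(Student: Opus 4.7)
The plan is to take $f=a$ equal to the apex of the pyramid, show that the spindle $S_{P_0,z_0}(a)$ associated with the unique integer point $z_0$ in the relative interior of the base $P_0$ carries all of the $n$-dimensional mass of the lifting region $R(B,a)$, and deduce that this spindle tiles $\R^n$ by integer translations. Theorem~\ref{thm:trans-spindle} will then force $S_{P_0,z_0}(a)$ to be the image of the $n$-dimensional hypercube under an invertible affine transformation, and transferring the resulting information back to $B$ via tangent cones will force the base $P_0$ to have exactly $n$ vertices, hence to be a simplex.

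Write $B=\conv(\{a\}\cup P_0)$, where $a$ is the apex and $P_0$ is the base, and let $z_0$ denote the unique element of $\relintr(P_0)\cap\Z^n$. Theorem~\ref{thm:affine} asserts that $f\mapsto\vol(R(B,f)/\Z^n)$ is the restriction to $B$ of an affine function; the unique-lifting hypothesis forces its value to be $1$ on $\intr(B)$, and by continuity also at $f=a$. We next examine each facet's contribution to $R(B,a)$. For every facet $F\ni a$, the set $\conv(\{a\}\cup F)=F$ is $(n-1)$-dimensional, so $R_F(a)$ has zero $n$-dimensional volume. For $F=P_0$ and any $z\in\partial P_0\cap\Z^n$, pick a facet $F'\neq P_0$ of $B$ containing $z$ (which exists because $\partial P_0$ is covered by faces of the form $P_0\cap F''$ with $F''$ a facet of $B$ distinct from $P_0$); since both $a$ and $z$ lie in $F'$, the reflection through $(a+z)/2$ carries $B$ into the closed halfspace of $\aff(F')$ opposite to the one containing $B$, so $S_{P_0,z}(a)=B\cap(z+a-B)\subseteq\aff(F')$ is at most $(n-1)$-dimensional. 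Therefore the entire $n$-dimensional mass of $R(B,a)$ sits inside $S_{P_0,z_0}(a)$, and Lemma~\ref{when collision?}, applied with $F_1=F_2=P_0$ and $z_1=z_2=z_0$, shows that distinct interior points of $S_{P_0,z_0}(a)$ are non-congruent modulo $\Z^n$. Combining, $\vol(S_{P_0,z_0}(a)/\Z^n)=\vol(S_{P_0,z_0}(a))=1$, i.e., $S_{P_0,z_0}(a)$ translatively tiles $\R^n$ by $\Z^n$.

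By Theorem~\ref{thm:trans-spindle}, $S_{P_0,z_0}(a)$ is the image of the $n$-dimensional hypercube under an invertible affine transformation, so its tangent cone at the apex $a$ is an affine image of $\R_+^n$ and has exactly $n$ extreme rays. On the other hand, the tangent cone of $B$ at $a$ equals $\operatorname{cone}(P_0-a)$, and since $z_0\in\relintr(P_0)$, the tangent cone of $z_0+a-B$ at $a$ is the closed halfspace $\{v:\sprod{u}{v}\le 0\}$, where $u$ is an inward normal to $P_0$. A direct computation using the defining inequality of $P_0$ shows $\operatorname{cone}(P_0-a)\subseteq\{v:\sprod{u}{v}\le 0\}$, whence the tangent cone of $S_{P_0,z_0}(a)$ at $a$ coincides with $\operatorname{cone}(P_0-a)$. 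The extreme rays of $\operatorname{cone}(P_0-a)$ are precisely the rays $\R_+(v-a)$ for vertices $v$ of $P_0$, so $P_0$ has exactly $n$ vertices; being an $(n-1)$-dimensional polytope with $n$ vertices, $P_0$ must be an $(n-1)$-simplex, and so $B$ is an $n$-simplex.

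The main obstacle is the bookkeeping in the second paragraph: ruling out that integer points in $\partial P_0$ give rise to additional $n$-dimensional spindles contributing to $R(B,a)$. The reflection argument outlined above resolves this cleanly and relies essentially on the pyramidal structure of $B$, which guarantees that every facet other than the base passes through the apex.
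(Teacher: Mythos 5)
Your proof is correct and takes essentially the same route as the paper: with $f$ placed at the apex, the only full-dimensional spindle in $R(B,a)$ is $S_{P_0,z_0}(a)$, it tiles $\R^n$ by integral translations, Theorem~\ref{thm:trans-spindle} makes it an affine image of the cube, and the simplicial tangent cone at the apex forces $B$ to be a simplex. The only divergence is bookkeeping: where the paper invokes Proposition~\ref{special situation thm}(c), you rederive the tiling directly from Theorem~\ref{thm:affine} and Lemma~\ref{when collision?}, and your reflection argument for lattice points on the relative boundary of $P_0$ supplies a detail the paper asserts without proof (that $S_{P_0,z_0}(a)$ is the only full-dimensional spindle), while also sidestepping the fact that Proposition~\ref{special situation thm} formally requires exactly one interior lattice point in every facet, not just in the base.
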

\begin{proof}
Let $f$ be the apex and $F$ be the base of $B$ and let $z$ be the unique integral point in $\relintr(F)$. Since $B$ has the unique-lifting property, by Proposition~\ref{special situation thm}(c), the topological closure of $\intr(R(B,f))$ tiles $\R^n$ by its integral translations. The topological closure of $\intr(R(B,f))$ is $S= S_{F,z}(f)$, since $S_{F,z}(f)$ is the only full-dimensional spindle involved in the definition of $R(B,f)$. Thus, $S$ tiles $\R^n$ by translations. By Theorem~\ref{thm:trans-spindle}, $S$ is an image of a cube under an invertible affine transformation. In particular, the tangent cone at the apex $f$ is a simple cone. Therefore, $B$ is a simplex.
\end{proof}

The following theorem is proved in~\cite{bcm}.

\begin{theorem}\label{bcm-main}
Let $n \in \N$ and let $B$ be a maximal lattice-free simplex in $\R^n$ such that each facet of $B$ has exactly one integer point in its relative interior.  Then $B$ has the unique-lifting property if and only if $B$ is an affine unimodular transformation of $\conv(\{o, ne_1, \ldots, ne_n\})$.
\end{theorem}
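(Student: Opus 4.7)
The plan is to apply Proposition \ref{special situation thm} at each vertex of the simplex $B$. Let $v_0, \ldots, v_n$ denote the vertices, $F_i$ the facet opposite $v_i$, and $z_i$ the unique integer point in $\relintr(F_i)$. For $f = v_i$, the pyramids $\conv(F_j \cup \{v_i\})$ with $j \neq i$ have dimension $n-1$ (since $v_i \in F_j$ in a simplex), so the corresponding spindles carry no volume; one likewise checks that the spindle $S_{F_i, z}(v_i)$ is lower-dimensional for every integer point $z$ on the boundary of $F_i$, so that the full-dimensional part of $R(B, v_i)$ is exactly $S_{F_i, z_i}(v_i)$.

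Writing $z_i = \sum_{j \neq i}\lambda_{ij} v_j$ in barycentric coordinates (with $\lambda_{ij} > 0$ and $\sum_{j \neq i} \lambda_{ij} = 1$) and substituting $d_j := \lambda_{ij} - t_j$ inside $B - v_i$, a direct computation yields
\[
	S_{F_i, z_i}(v_i) - v_i = \Bigl\{\sum_{j \neq i} d_j(v_j - v_i) : 0 \leq d_j \leq \lambda_{ij}\Bigr\},
\]
a parallelepiped with edge vectors $\alpha_{ij} := \lambda_{ij}(v_j - v_i)$; the extraneous constraint $\sum d_j \in [0,1]$ is absorbed by $\sum \lambda_{ij} = 1$. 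By Proposition \ref{special situation thm}, the UL property forces this parallelepiped to tile $\R^n$ by $\Z^n$ with unit volume, so $|\det(\alpha_{ij})_{j \neq i}| = 1$.

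The heart of the argument is to show that each edge $\alpha_{ij}$ lies in $\Z^n$; once this is done, $|\det| = 1$ upgrades the $\alpha_{ij}$ to primitive integer vectors forming a $\Z$-basis of $\Z^n$, and $v_i = z_i - \sum_j \alpha_{ij} \in \Z^n$ follows. This integrality step uses the uniqueness of $z_i$ in $\relintr(F_i)$ via Lemma \ref{when collision?} together with the simultaneous compatibility of the tilings at all $n+1$ vertices. Granting integrality, $\ell_{ij} := 1/\lambda_{ij}$ is a positive integer satisfying $\ell_{ij} = \ell_{ji}$ and $v_j - v_i = \ell_{ij}\alpha_{ij}$. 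An affine unimodular transformation sending $v_0 \mapsto o$ and $\{\alpha_{0j}\}_{j=1}^n \mapsto \{e_j\}_{j=1}^n$ then puts $B$ in the form $\conv\{o, \ell_1 e_1, \ldots, \ell_n e_n\}$ with $\ell_j := \ell_{0j}$. At any other vertex $v_k = \ell_k e_k$, the primitive vector along $v_j - v_k$ (for $j \neq 0, k$) equals $\gcd(\ell_j, \ell_k)^{-1}(\ell_j e_j - \ell_k e_k)$, and the $\Z$-basis condition at $v_k$ evaluated by expanding the determinant along the column for $\alpha_{k, 0} = -e_k$ gives $\prod_{j \neq 0, k}\ell_j/\gcd(\ell_j, \ell_k) = 1$; as each factor is a positive integer, $\ell_j \mid \ell_k$ for every $j \neq k$. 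Applying this at each vertex forces all $\ell_j$ equal, and $\sum_{j=1}^n 1/\ell_j = 1$ then yields $\ell_j = n$, so $B = \conv\{o, ne_1, \ldots, ne_n\}$. The converse direction is immediate from Corollary \ref{cor:a1-an}(a) with $a_1 = \cdots = a_n = n$.

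The main technical hurdle is the integrality step, since a generic parallelepiped can tile $\R^n$ by $\Z^n$ without its edges lying in $\Z^n$; the uniqueness-per-facet hypothesis and the interplay among the tilings at all vertices are both essential here. Once integrality is in hand, the remainder of the argument is a short arithmetic divisibility computation.
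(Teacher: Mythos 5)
First, note that this paper does not prove Theorem~\ref{bcm-main} at all: it is quoted from \cite{bcm}, so your argument has to stand entirely on its own. Up to the crucial point it does: for $f=v_i$ the only full-dimensional piece of $R(B,v_i)$ is $S_{F_i,z_i}(v_i)$, your barycentric computation showing that this spindle is the parallelepiped with edge vectors $\alpha_{ij}=\lambda_{ij}(v_j-v_i)$ is correct, and Proposition~\ref{special situation thm} then indeed gives $|\det(\alpha_{ij})_{j\ne i}|=1$ and that this parallelepiped tiles $\R^n$ by integral translations. The endgame is also sound \emph{conditionally}: if every $\alpha_{ij}$ lies in $\Z^n$, then the $\alpha_{ij}$ form a basis of $\Z^n$, the vertices are integral, $\ell_{ij}=1/\lambda_{ij}\in\N$, and your divisibility computation at the vertices together with $\sum_j 1/\ell_j=1$ forces $\ell_1=\cdots=\ell_n=n$; the converse via Corollary~\ref{cor:a1-an}(a) is fine.

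The genuine gap is the integrality claim itself, which you explicitly ``grant'' rather than prove. As you yourself observe, a unit-volume parallelepiped can tile $\R^n$ by $\Z^n$-translations without having integral edges (e.g.\ edge vectors $(1,\tfrac12)$ and $(0,1)$ in $\R^2$), so the tiling at a single vertex cannot yield integrality, and the appeal to ``the uniqueness of $z_i$ via Lemma~\ref{when collision?} together with the simultaneous compatibility of the tilings at all $n+1$ vertices'' is not an argument: Lemma~\ref{when collision?} only constrains congruences modulo $\Z^n$ between points of one lifting region $R(B,f)$ for a \emph{fixed} $f$, and no mechanism is given for how the $n+1$ tilings obtained from $f=v_0,\ldots,v_n$ interact to force $\alpha_{ij}\in\Z^n$. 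This is not a routine detail to be deferred; it is precisely the content of the theorem and the part where the work in \cite{bcm} is done. As written, your proposal establishes only the implication ``if all $\alpha_{ij}$ are integral, then $B$ is a unimodular image of $\conv(\{o,ne_1,\ldots,ne_n\})$,'' so the proof of the forward direction is incomplete at its central step.
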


We can now generalize this result to pyramids.

\begin{theorem}\label{thm:unique-integer}
Let $n \in \N$ and let $B$ be a maximal lattice-free pyramid in $\R^n$ such that every facet of $P$ contains exactly one integer point in its relative interior. Then $B$ has the unique-lifting property if and only if $B$ is an affine unimodular transformation of $\conv(\{o, ne_1, \ldots, ne_n\})$.
\end{theorem}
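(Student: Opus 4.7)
The plan is to derive Theorem~\ref{thm:unique-integer} by combining Theorem~\ref{thm:base-condition} with Theorem~\ref{bcm-main}. The ``if'' direction is immediate: if $B$ is an affine unimodular image of $\conv(\{o,ne_1,\ldots,ne_n\})$, then $B$ is a maximal lattice-free simplex in which each facet contains a unique integer point in its relative interior, so Theorem~\ref{bcm-main} asserts that $B$ has the unique-lifting property (the unique-lifting property is preserved under affine unimodular transformations because these preserve $\Z^n$ and the lifting region transforms equivariantly).

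For the ``only if'' direction, assume $B$ is a maximal lattice-free pyramid with the unique-lifting property such that every facet of $B$ contains exactly one integer point in its relative interior. Let $F$ be the base of $B$, viewed as a facet of $B$. Then by assumption $\relintr(F) \cap \Z^n$ is a single point. This is precisely the hypothesis of Theorem~\ref{thm:base-condition}, so we conclude that $B$ is in fact a simplex. Now $B$ is a maximal lattice-free simplex in $\R^n$ in which every facet has exactly one integer point in its relative interior, and $B$ has the unique-lifting property; so Theorem~\ref{bcm-main} applies and yields that $B$ is an affine unimodular transformation of $\conv(\{o,ne_1,\ldots,ne_n\})$.

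The entire content of the theorem is thus carried by the two ingredients already established: the ``pyramid with unique-lifting forces simplex'' result (Theorem~\ref{thm:base-condition}), which in turn rests on McMullen's tiling-spindle theorem (Theorem~\ref{thm:trans-spindle}) via the observation that the closure of $\intr(R(B,f))$ in the pyramid case is a single spindle, and the simplex classification (Theorem~\ref{bcm-main}). Since both of these have been established earlier, there is no serious obstacle to overcome here; the only point that requires minor care is to confirm that the ``if'' direction genuinely follows from Theorem~\ref{bcm-main} once one checks that unique-lifting is invariant under affine unimodular transformations, which is clear from the geometric characterization $R(B,f)+\Z^n=\R^n$.
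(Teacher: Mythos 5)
Your proposal is correct and follows essentially the same route as the paper: Theorem~\ref{thm:base-condition} forces $B$ to be a simplex, after which Theorem~\ref{bcm-main} gives the classification. The only cosmetic difference is in the sufficiency direction, where the paper invokes Corollary~\ref{cor:a1-an}.(a) while you use the ``if'' part of Theorem~\ref{bcm-main} together with unimodular invariance of the unique-lifting property; both are valid.
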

\begin{proof}
	Sufficiency follows from Corollary~\ref{cor:a1-an}.(a). For showing the necessity we assume that $B$ has the unique-lifting property. By Theorem~\ref{thm:base-condition}, $B$ is a simplex. Thus, Theorem~\ref{bcm-main} can be applied and the necessity follows immediately.
\end{proof}

\section*{Acknowledgements} 

We are grateful to Peter McMullen for providing an outline of the proof of Theorem~\ref{thm:cen-sym-spindle}, which is implemented in Section~\ref{sect spindles} of the appendix. We also would like to thank Martina Z\"ahle for pointing to \cite{MR2427002}.

\bibliographystyle{amsalpha}
\bibliography{literature}

\newcommand{\etalchar}[1]{$^{#1}$}
\providecommand{\bysame}{\leavevmode\hbox to3em{\hrulefill}\thinspace}
\providecommand{\MR}{\relax\ifhmode\unskip\space\fi MR }
\providecommand{\MRhref}[2]{%
  \href{http://www.ams.org/mathscinet-getitem?mr=#1}{#2}
}
\providecommand{\href}[2]{#2}
\begin{thebibliography}{BHKM13}

\bibitem[Ave12]{MR2969261}
G.~Averkov, \emph{On finitely generated closures in the theory of cutting
  planes}, Discrete Optim. \textbf{9} (2012), no.~4, 209--215.

\bibitem[Ave13]{MR3027668}
\bysame, \emph{A proof of {L}ov\'asz's theorem on maximal lattice-free sets},
  Beitr. Algebra Geom. \textbf{54} (2013), no.~1, 105--109.

\bibitem[AWW11]{MR2855866}
G.~Averkov, Ch. Wagner, and R.~Weismantel, \emph{Maximal lattice-free
  polyhedra: finiteness and an explicit description in dimension three}, Math.
  Oper. Res. \textbf{36} (2011), no.~4, 721--742.

\bibitem[BCC{\etalchar{+}}13]{bcccz}
A.~Basu, M.~Camp{\^e}lo, M.~Conforti, G.~Cornu{\'e}jols, and G.~Zambelli,
  \emph{Unique lifting of integer variables in minimal inequalities}, Math.
  Program. \textbf{141} (2013), no.~1-2, Ser. A, 561--576.

\bibitem[BCCZ10]{MR2724071}
A.~Basu, M.~Conforti, G.~Cornu{\'e}jols, and G.~Zambelli, \emph{Maximal
  lattice-free convex sets in linear subspaces}, Math. Oper. Res. \textbf{35}
  (2010), no.~3, 704--720.

\bibitem[BCK12]{bcm}
A.~Basu, G.~Cornu{\'e}jols, and M.~K{\"o}ppe, \emph{Unique minimal liftings for
  simplicial polytopes}, Math. Oper. Res. \textbf{37} (2012), no.~2, 346--355.

\bibitem[BHKM13]{basu2013k+1}
A.~Basu, R.~Hildebrand, M.~K{\"o}ppe, and M.~Molinaro, \emph{A (k+1)-slope
  theorem for the k-dimensional infinite group relaxation}, SIAM Journal on
  Optimization \textbf{23} (2013), no.~2, 1021--1040.

\bibitem[CCZ11a]{corner_survey}
M.~Conforti, G.~Cornu\'ejols, and G.~Zambelli, \emph{Corner polyhedra and
  intersection cuts}, Surveys in Operations Research and Management Science
  \textbf{16} (2011), 105--120.

\bibitem[CCZ11b]{ccz}
M.~Conforti, G.~Cornu{\'e}jols, and G.~Zambelli, \emph{A geometric perspective
  on lifting}, Oper. Res. \textbf{59} (2011), no.~3, 569--577.

\bibitem[Doi73]{doignon}
J.-P. Doignon, \emph{Convexity in crystallographic lattices}, Journal of
  Geometry \textbf{3} (1973), 71--85.

\bibitem[DW10a]{dw2}
S.~S. Dey and L.~A. Wolsey, \emph{Composite lifting of group inequalities and
  an application to two-row mixing inequalities}, Discrete Optim. \textbf{7}
  (2010), no.~4, 256--268.

\bibitem[DW10b]{dw}
\bysame, \emph{Two row mixed-integer cuts via lifting}, Math. Program.
  \textbf{124} (2010), no.~1-2, Ser. B, 143--174.

\bibitem[GJ72a]{MR0479415}
R.~E. Gomory and E.~L. Johnson, \emph{Some continuous functions related to
  corner polyhedra}, Math. Programming \textbf{3} (1972), 23--85.

\bibitem[GJ72b]{MR0479416}
\bysame, \emph{Some continuous functions related to corner polyhedra. {II}},
  Math. Programming \textbf{3} (1972), 359--389.

\bibitem[Gom60]{gomory1960algorithm}
R.~E. Gomory, \emph{An algorithm for the mixed integer problem}, Tech. report,
  DTIC Document, 1960.

\bibitem[Gom69]{MR0256718}
\bysame, \emph{Some polyhedra related to combinatorial problems}, Linear
  Algebra and Appl. \textbf{2} (1969), 451--558.

\bibitem[Gru07]{gruber}
P.~M. Gruber, \emph{Convex and {D}iscrete {G}eometry}, Grundlehren der
  Mathematischen Wissenschaften [Fundamental Principles of Mathematical
  Sciences], vol. 336, Springer, Berlin, 2007.

\bibitem[HRGZ97]{MR1730169}
M.~Henk, J.~Richter-Gebert, and G.~M. Ziegler, \emph{Basic properties of convex
  polytopes}, Handbook of discrete and computational geometry, CRC Press Ser.
  Discrete Math. Appl., CRC, Boca Raton, FL, 1997, pp.~243--270.

\bibitem[Joh74]{johnson1974group}
E.~L. Johnson, \emph{On the group problem for mixed integer programming},
  Mathematical Programming Study \textbf{2} (1974), 137--179.

\bibitem[KP08]{MR2427002}
S.~G. Krantz and H.~R. Parks, \emph{Geometric {I}ntegration {T}heory},
  Cornerstones, Birkh\"auser Boston, Inc., Boston, MA, 2008.

\bibitem[Len83]{MR727410}
H.~W. Lenstra, Jr., \emph{Integer programming with a fixed number of
  variables}, Math. Oper. Res. \textbf{8} (1983), no.~4, 538--548.

\bibitem[Lov89]{MR1114315}
L.~Lov{\'a}sz, \emph{Geometry of numbers and integer programming}, Mathematical
  programming ({T}okyo, 1988), Math. Appl. (Japanese Ser.), vol.~6, SCIPRESS,
  Tokyo, 1989, pp.~177--201.

\bibitem[McM70]{mcmullen}
P.~McMullen, \emph{Polytopes with centrally symmetric faces}, Israel J. Math.
  \textbf{8} (1970), 194--196.

\bibitem[McM76]{MR0410566}
\bysame, \emph{Polytopes with centrally symmetric facets}, Israel J. Math.
  \textbf{23} (1976), no.~3-4, 337--338.

\bibitem[McM13]{mcmullen-email}
\bysame, \emph{Personal communication}, 2013.

\bibitem[Roc70]{MR0274683}
R.~T. Rockafellar, \emph{Convex {A}nalysis}, Princeton Mathematical Series, No.
  28, Princeton University Press, Princeton, N.J., 1970.

\bibitem[Sca77]{scarf}
H.~Scarf, \emph{An observation on the structure of production sets with
  indivisibiities}, Proceedings of the National Academy of Sciences \textbf{74}
  (1977), 3637--3641.

\bibitem[Sch93]{MR1216521}
R.~Schneider, \emph{Convex bodies: the {B}runn-{M}inkowski theory},
  Encyclopedia of Mathematics and its Applications, vol.~44, Cambridge
  University Press, Cambridge, 1993.

\bibitem[Zie95]{ziegler}
G.~M. Ziegler, \emph{Lectures on {P}olytopes}, Graduate Texts in Mathematics,
  vol. 152, Springer-Verlag, New York, 1995.

\end{thebibliography}

\appendix
\section{Proofs of Propositions~\ref{thm:trivial-lifting-compute} and \ref{thm:trivial-lifting-minimal}}

\begin{proof}[Proof of Proposition~\ref{thm:trivial-lifting-compute}] 
First we show that the infimum in \eqref{psi star def} is attained. 

Consider the case of a bounded $B$. For a sufficiently  large $N \in \N$ the Euclidean ball of radius $N$ centered at $o$ contains $B$. It follows that $\phi_{B-f}(r) \ge \frac{1}{N} \|r\|$ for every $r \in \R^n$, where $\| \dotvar \|$ is the Euclidean norm. Consequently, $\phi_{B-f}(r+w) \ge \frac{1}{N} \| r + w \| \ge \frac{1}{N}  ( \|w\| - \|r\| ) > \phi_{B-f}(r)$ for $r \in \R^n$ and $w \in \Z^n$ whenever $w$ fulfills $\|w\| >  (N \phi_{B-f}(r) +\|r\|)$. It follows that $\inf_{w \in \Z^n} \phi_{B-f}(r + w)$ is attained for some of finitely many vectors $w \in \Z^n$ satisfying $\|w\| \le (N \phi_{B-f}(r) + \|r\|)$.

Let us switch to the case that $B$ is unbounded. It is known that the recession cone of $B$ is a linear space spanned by rational vectors; see see \cite{MR1114315}, \cite{MR2724071} and \cite{MR3027668}. Up to appropriate unimodular transformations, we can assume that $B$ has the form $B = B' \times \R^k$, where $k \in \{1,\ldots,n-1\}$ and $B'$ is a bounded maximal lattice-free set in $\R^{n-k}$. We denote by $\phi_{B'-f}$ the gauge-function of $B'$; it is well known that $\phi_{B-f}((r',r'')) = \phi_{B'-f}(r')$ for all $(r', r'') \in \R^{n-k}\times \R^{k}$. Thus, it suffices to apply the assertion of the bounded case to $B'$ to get the assertion for an unbounded $B$.

It remains to prove the assertion on polynomial-time computability. Assume that $n \in N$ is fixed and that $f$ and $a_i$ ($i \in I$) are rational vectors, whose components are given as the input in standard binary encoding. We have  
\begin{align} 
	\phi_{B-f}^\ast(r)  := &\min_{w \in \Z^n} \phi_{B-f} (r + w) \nonumber
	\\  = &\min_{w \in \Z^n} \max_{i\in I} a_i\cdot(r + w) \nonumber
	\\  = &\min \setcond{\rho \ge 0}{\rho \ge a_i(r+w) \ \forall i \in I, \ w \in \Z^n}. \label{special MILP}
\end{align}
Expression \eqref{special MILP} defines a mixed-integer linear program with rational coefficients with $n$ integer variables (the components of $w$) and one real variable (the value $\rho$).
Since $n$ is fixed, Lenstra's algorithm \cite{MR727410} can be used to determine \eqref{special MILP} in polynomial time.
\end{proof}

\begin{proof}[Proof of Proposition~\ref{thm:trivial-lifting-minimal}] 

It was established in~\cite{bcccz} that for every $r\in \R^n$ such that $r+ f \in R(f,B)$, $\pi(r) = \phi_{B-f}(r)$ for every minimal lifting $\pi$ of $\phi_{B-f}$. Moreover, it is not difficult to see that every minimal lifting is periodic with respect to $\Z^n$, i.e., $\pi(r) = \pi(r+w)$ for every $r\in \R^n$ and $w \in \Z^n$. If $\phi_{B-f}$ has a unique lifting, then $R(f,B) + \Z^n = \R^n$. Therefore, for any $r$, there exists $w \in \Z^n$ such that $r + w + f \in R(f,B)$ and thus $\pi(r) = \pi(r + w) = \phi_{B-f}(r+w) \geq \phi_{B-f}^*(r)$ for every minimal lifting $\pi$, thus establishing that $\phi_{B-f}^*$ is a minimal lifting.

Suppose $\phi_{B-f}$ does not have a unique minimal lifting. This implies there are at least two distinct minimal liftings and so there must exist a minimal lifting $\pi$ that is different from the lifting $\phi_{B-f}^*$. However, we show below that $\pi \leq \phi_{B-f}^*$. Thus, $\phi_{B-f}^*$ is not a minimal lifting. 

To show that $\pi \leq \phi_{B-f}^*$, consider any $r \in \R^n$. It is well-known that $\pi \leq \phi_{B-f}$ because $\pi$ is a minimal lifting. By Theorem~\ref{thm:trivial-lifting-compute}, there exists $w \in \Z^n$ such that $\phi_{B-f}^*(r) = \phi_{B-f}(r + w)$. By the $\Z^n$-periodicity of $\pi$, we have $\pi(r) = \pi(r+w) \leq \phi_{B-f}(r+w) = \phi_{B-f}^*(r)$.
\end{proof}

\section{Proof of Theorem~\ref{thm:trans-spindle}}

\label{sect spindles}

Let $P\subseteq \R^n$ be an $n$-dimensional centrally symmetric polytope with centrally symmetric facets. Let $G$ be any  $(n-2)$-dimensional face of $P$. The {\em belt} corresponding to $G$ is the set of all facets which contain a translate of $G$ or $-G$. Observe that every centrally symmetric polytope $P$ with centrally symmetric facets has belts of even size greater than or equal to $4$. 

A {\em zonotope} is a polytope given by a finite set of vectors $V=\{v_1, \ldots, v_k\} \subseteq \R^n$ in the following way: $Z(V) := \{\lambda_1v_1 + \ldots + \lambda_kv_k : -1 \leq \lambda_i \leq 1 \quad \forall i = 1, \ldots, k\}.$
We recall that $F(P,u)$ denotes the face of points in $P$ maximizing the linear function $x \mapsto \sprod{u}{x}$. The following simple lemma is well-known.
	
\begin{lemma} \label{lem:zonotope-belt}
	Let $n \in \N$. Let $V$ be a nonempty finite subset of $\R^n$ and let $u \in \R^n$. Then the face $F(Z(V),u)$ of the zonotope $Z(V)$ coincides, up to a translation, with the zonotope $Z\bigl( \setcond{v \in V}{\sprod{u}{v} = 0} \bigr)$. 
\end{lemma}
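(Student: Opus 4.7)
The plan is to compute $F(Z(V),u)$ directly from the defining representation of the zonotope, exploiting the fact that the linear function $x \mapsto \sprod{u}{x}$ decouples across the generators $v \in V$.

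First I would write an arbitrary $x \in Z(V)$ as $x = \sum_{v \in V} \lambda_v v$ with $\lambda_v \in [-1,1]$, so that $\sprod{u}{x} = \sum_{v \in V} \lambda_v \sprod{u}{v}$. Since each term depends only on its own coordinate $\lambda_v$, the maximum of $\sprod{u}{x}$ over $Z(V)$ is the sum of the individual maxima $\max_{\lambda_v \in [-1,1]} \lambda_v \sprod{u}{v} = |\sprod{u}{v}|$. Thus $h(Z(V),u) = \sum_{v \in V} |\sprod{u}{v}|$.

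Next I would partition $V$ into $V_+ := \setcond{v \in V}{\sprod{u}{v}>0}$, $V_- := \setcond{v \in V}{\sprod{u}{v}<0}$ and $V_0 := \setcond{v \in V}{\sprod{u}{v}=0}$. The maximum of $\sprod{u}{x}$ is attained precisely at points where $\lambda_v = 1$ for $v \in V_+$, $\lambda_v = -1$ for $v \in V_-$, while the coordinates $\lambda_v$ with $v \in V_0$ remain free in $[-1,1]$. Setting
\[
	t := \sum_{v \in V_+} v - \sum_{v \in V_-} v,
\]
this description yields
\[
	F(Z(V),u) = t + \setcond{\sum_{v \in V_0} \lambda_v v}{\lambda_v \in [-1,1] \ \forall v \in V_0} = t + Z(V_0),
\]
which is the claimed translate of $Z(V_0)$.

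The argument is essentially a one-line decoupling computation, so no real obstacle arises; the only point requiring a little care is the handling of the case $V_0 = \emptyset$ (in which $F(Z(V),u) = \{t\}$ is a single point, and $Z(V_0)$ should be interpreted as $\{o\}$) and the possibility that $V_+$ or $V_-$ is empty, both of which are absorbed by the convention that empty sums equal the origin.
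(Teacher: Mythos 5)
Your proof is correct and is essentially the paper's argument: the paper decomposes $Z(V)$ as the Minkowski sum of the segments $[-v,v]$ and invokes Minkowski additivity of the face functional $F(\dotvar,u)$ together with the case analysis $F([-v,v],u)\in\{\{v\},\{-v\},[-v,v]\}$, which is exactly the decoupling you perform by hand in the coordinates $\lambda_v$. Your translation vector $t=\sum_{v\in V_+}v-\sum_{v\in V_-}v$ and the free coordinates over $V_0$ correspond precisely to the paper's three cases, so the two write-ups differ only in whether the additivity of $F(\dotvar,u)$ is cited or re-derived.
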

\begin{proof}
	By the Minkowski additivity of the functional $F(\dotvar,u)$, defined by \eqref{F in dir u eq}, we get $F(Z(V),u) = \sum_{v \in V} F([-v,v],u).$ 
		It is straightforward to verify that for every $v \in V$ one has 
	\[		F([-v,v],u) := 
			\begin{cases}
				\{-v\} & \text{if} \ \sprod{u}{v} < 0, 
				\\ [-v,v] & \text{if} \ \sprod{u}{v} = 0, 
				\\ \{v\} & \text{if} \ \sprod{u}{v} > 0.
			\end{cases}
	\]
Putting these observations together, we have the assertion.
\end{proof}

The latter lemma shows that every face of a zonotope is a zonotope (and, thus, centrally symmetric). The following lemma deals with belts of zonotopes. Each belt of the cube $[-1,1]^n$ consists of exactly four facets. The following theorem shows that the latter property essentially characterizes cubes within all zonotopes.


\begin{theorem}
	Let $n \in \N$, $n \ge 3$. Let $V$ be a finite set linearly spanning $\R^n$ and such that each belt of the $n$-dimensional zonotope $Z(V)$ consists of exactly four facets. Then $Z(V)$ is the image of the $n$-dimensional cube $[-1,1]^n$ under a invertible linear transformation.
\end{theorem}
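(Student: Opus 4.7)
The plan is to translate the belt-size hypothesis into a statement about projections of the generators of $V$, and then show by a short counting argument that $V$ must (up to combining parallel vectors) be a basis of $\R^n$.

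First I would normalize $V$ so that no two of its vectors are parallel: any group of pairwise parallel vectors can be replaced by their signed sum without altering $Z(V)$, so we lose no generality. Next I would establish the belt/projection dictionary. By Lemma~\ref{lem:zonotope-belt}, the facets of $Z(V)$ are precisely the sets $F(Z(V),u)$ with $u\neq o$ such that $W_u:=\setcond{v\in V}{\sprod{u}{v}=0}$ has rank $n-1$. Fix an $(n-2)$-dimensional subspace $L=\vectorsp(W)$, where $W\subseteq V$ is linearly independent of cardinality $n-2$; the corresponding $(n-2)$-face is a translate of $Z(V\cap L)$. A facet $F(Z(V),u)$ belongs to the belt of this face iff $L\subseteq\vectorsp(W_u)$, equivalently $u\in L^\perp$. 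In the $2$-dimensional space $L^\perp$, project the vectors of $V\setminus L$; write $m$ for the number of distinct lines through the origin that these projections occupy. Each such line $\ell$ yields exactly two belt facets, coming from $\pm u\in L^\perp$ with $u\perp\ell$ (using that $V$ spans $\R^n$ to see that $W_u$ has rank exactly $n-1$, not $n$), and distinct lines give distinct facets. Hence the belt contains $2m$ facets, and the hypothesis of the theorem becomes: for every linearly independent $W\subseteq V$ with $|W|=n-2$, the projections of $V\setminus L$ into $L^\perp$ occupy exactly two lines through the origin.

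Now I would run the counting argument. Choose a basis $B=\{b_1,\ldots,b_n\}\subseteq V$, possible since $V$ spans $\R^n$. Suppose toward a contradiction that there is some $v\in V\setminus B$, and write $v=\sum_{i=1}^n c_i b_i$. Because $v$ is not parallel to any $b_i$, at least two coefficients, say $c_i$ and $c_j$, are nonzero. Take $W:=B\setminus\{b_i,b_j\}$ and $L:=\vectorsp(W)$; then $b_i,b_j,v\in V\setminus L$, and their projections $b_i',b_j',v'$ into $L^\perp$ satisfy $v'=c_i b_i'+c_j b_j'$, with $b_i',b_j'$ a basis of $L^\perp$ and $c_i,c_j\neq 0$. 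Hence $b_i',b_j',v'$ determine three pairwise distinct lines in $L^\perp$, so the belt would contain at least $6$ facets, contradicting the hypothesis. Therefore $V=B$, and $Z(V)$ is the image of $[-1,1]^n$ under the invertible linear map sending $e_i$ to $b_i$, as required.

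The only place that requires any real care is the equality ``belt size $=2m$'': one must verify both that each line of projections does produce a genuine facet (which uses that $V$ spans $\R^n$, ruling out rank $n$ for $W_u$) and that different lines yield different belt facets. Beyond that, the argument is combinatorial and elementary.
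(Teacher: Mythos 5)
Your proof is correct and follows essentially the same route as the paper: assuming some generator is not parallel to a chosen basis $B\subseteq V$, you pass to $L=\vectorsp(B\setminus\{b_i,b_j\})$ and show the belt of the corresponding $(n-2)$-face has at least six facets, which is exactly the paper's contradiction (its six normals $\pm e_1,\pm e_2,\pm(-\alpha_2,\alpha_1,0,\ldots,0)$ are precisely the directions in $L^\perp$ orthogonal to your three projection lines). Your ``belt size $=2m$'' dictionary is just a slightly more systematic packaging of that construction, with the parallel-vector normalization replacing the paper's reduction to ``every vector of $V$ is parallel to a basis vector.''
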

\begin{proof}
	Choose a basis $b_1,\ldots,b_n$ of $\R^n$ consisting of vectors in $V$. It suffices to show that every vector of $V$ is parallel to some vector of $\{b_1,\ldots,b_n\}$. After a change of coordinates in $\R^n$ we can assume that $b_1,\ldots,b_n$ is the standard basis $e_1,\ldots,e_n$. 
	
	Assume to the contrary, that there exists a vector $a = (\alpha_1,\alpha_2,\ldots,\alpha_n) \in V$ which is not parallel to any vector of the basis $e_1,\ldots,e_n$. Thus, at least two of its components $\alpha_1,\ldots,\alpha_n$ are nonzero.  Without loss of generality let $\alpha_1 \ne 0$ and $\alpha_2 \ne 0$. Let $W := V \cap (\{0\}^2 \times \R^{n-2})$. We have $e_3,\ldots,e_n \in W$ and $e_1,e_2, a \in V \setminus W$. Choose a nonzero vector $u' = \R^2 \times \{0\}^{n-2}$ such that $u'$ is not orthogonal to any vector from $V \setminus W$ (e.g., one can choose $u'= (1,\eps,0,\ldots,0)$, where $\eps>0$ is small). By Lemma~\ref{lem:zonotope-belt}, the face $G:=F(Z(V),u')$ is a translation of $Z(W)$. By the choice of $W$, the zonotope $Z(W)$ is $(n-2)$-dimensional.  We analyze the belt of $Z(V)$ determined by the $(n-2)$-dimensional face $G$.
	
	We shall construct a number of facets $F(Z(V),u)$ with $u \in \R^2 \times \{0\}^{n-2}$ belonging to the belt generated by $G$. In view of Lemma~\ref{lem:zonotope-belt}, for $u=e_1$ the face $F( Z(V), u)$ contains a translation of $Z( \{e_2\} \cup W)$. Similarly, for $u= e_2$ the face $F(Z(V),u)$ contains a translation of $Z(\{e_1\} \cup W)$. For a nonzero vector $u \in \R^2 \times \{0\}^{n-2}$ orthogonal to $a$ (say, for $u=(-\alpha_2,\alpha_1,0,\ldots,0)$) the face $F(Z(V),u)$ contains a translation of $Z(\{a\} \cup W)$. Since the zonotopes $Z(\{e_1\} \cup W)$, $Z(\{e_2\} \cup W)$ and $Z(\{a\} \cup W)$ are $(n-1)$-dimensional, we see that for all three choices of $u$ above, the face $F(Z(V),u)$ is actually a facet. The latter shows that the six distinct facets $F(Z(V),u)$ with $u \in \{\pm e_1,\pm e_2, \pm (-\alpha_2,\alpha_1,0,\ldots,0) \}$ belong to the belt generated by $G$. The latter is a  contradiction to the assumptions on $Z(V)$.
\end{proof}

\begin{theorem}\label{thm:cen-sym-spindle} \thmheader{McMullen~\cite{mcmullen-email}}
 Let $n \in \N$, $n \ge 3$, and let $S\subseteq \R^n$ be an $n$-dimensional spindle with centrally symmetric facets. Then $S$ is the image of the $n$-dimensional hypercube under an invertible affine transformation.
\end{theorem}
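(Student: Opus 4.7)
My plan is to reduce Theorem~\ref{thm:cen-sym-spindle} to the preceding theorem in the appendix, which asserts that an $n$-dimensional zonotope in which every belt consists of exactly four facets is the image of $[-1,1]^n$ under an invertible linear transformation. Once that reduction is done, it suffices to establish (i) that $S$ is a zonotope and (ii) that every belt of $S$ has exactly four facets.

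For (i), I would invoke the classical characterization that a polytope of dimension at least three is a zonotope if and only if each of its two-dimensional faces is centrally symmetric. I would first strengthen the hypothesis by showing that every facet of $S$ is in fact a parallelotope. Since $a_i \in \relintr(P_j) + a_j$, no facet of $S$ contains both apexes, so each facet $F$ contains exactly one of them, say $a_1$. Then $a_1$ is a vertex of $F$ whose tangent cone inside $F$ equals a facet-cone of the pointed cone $P_1$. Central symmetry of $F$ forces the antipode of $a_1$ in $F$ to have the opposite tangent cone; combined with the constraint that $F$ lies inside the opposite cone $P_2 + a_2$, this rigidity forces $F$ to be a parallelotope of dimension $n-1$. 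Since every face of a parallelotope is centrally symmetric, every two-face of $S$ is centrally symmetric, and hence $S$ is a zonotope.

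For (ii), fix an $(n-2)$-face $G$ of $S$ and let $F_1, F_2$ be the two facets of $S$ containing $G$. Since each $F_i$ is a parallelotope, its central symmetry sends $G$ to an antipodal $(n-2)$-subface $G_i'$ of $F_i$, which is a translate of $-G$. Each $G_i'$ lies in $F_i$ and in exactly one further facet $F_i'$ of $S$. I claim the belt around $G$ is exactly $\{F_1, F_2, F_1', F_2'\}$: tracing the central symmetries of the parallelotope facets $F_1'$ and $F_2'$ produces further translates of $G$ that must coincide, up to translation, with $G_{3-i}'$ inside $F_{3-i}$, closing the cycle after four facets. Distinctness of the four facets follows from the bipartite structure of facets according to which apex they contain.

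Combining (i) and (ii), the preceding theorem shows $S$ is an invertible linear image of $[-1,1]^n$, hence an invertible affine image of the cube $[0,1]^n$. The main obstacle is the parallelotope upgrade in (i): going from a centrally symmetric facet to a parallelotope facet requires more than central symmetry alone, and must genuinely exploit the local cone structure at the apex, namely the fact that the tangent cone of $F$ at $a_1$ coincides with a facet-cone of $P_1$ and is simultaneously squeezed from the other side by $P_2 + a_2$. Once this upgrade is in place, both the zonotope conclusion and the four-facet belt count follow routinely.
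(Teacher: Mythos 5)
Your endgame coincides with the paper's (show $S$ is a zonotope all of whose belts consist of four facets, then apply the zonotope theorem), but both of your inputs (i) and (ii) are exactly where the work lies, and both arguments have genuine gaps. The parallelotope claim in (i) is false as a local statement about a single facet. Writing $H$ for the facet hyperplane through the apex $a_1$, the facet is $F = C \cap \bigl((P_2+a_2)\cap H\bigr)$, where $C$ is the facet-cone of $P_1+a_1$ lying in $H$; since $a_2 \notin H$, the boundary of $P_2+a_2$ may cut $C$ along several hyperplanes near the antipode of $a_1$, and no rigidity is imposed there. Concretely, in $\R^3$ take $P_1+a_1$ to be the nonnegative orthant with $a_1=o$ and $H=\{x_3=0\}$, and let $P_2+a_2$ be the pointed cone with apex $a_2=(3/2,3/2,t)$, $t>0$, cut out by the four half-spaces through $a_2$ that meet $H$ in the lines $x_1=3$, $x_2=3$, $x_1-x_2=2$, $x_2-x_1=2$ (each taken to contain $o$). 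One checks $o\in\intr(P_2+a_2)$, $a_2\in\intr(P_1+a_1)$, and that the intersection is bounded, so this is a spindle; its facet in $H$ is the centrally symmetric hexagon with vertices $(0,0),(2,0),(3,1),(3,3),(1,3),(0,2)$, not a parallelogram. Put differently, a centrally symmetric hexagon is itself a two-dimensional spindle, so the rigidity you assert is essentially the theorem one dimension lower and would need central symmetry of the facets of $F$, i.e.\ of the $(n-2)$-faces of $S$ --- which is precisely the hard part. The paper obtains this globally: Alexandrov--Shephard makes $S$ centrally symmetric, a projection argument using the two apexes shows every belt has length four, a case analysis with the facet centers then shows all $(n-2)$-faces are centrally symmetric, and McMullen's theorem \cite{mcmullen} descends to $2$-faces, giving the zonotope property.

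Step (ii) has the same defect: the assertion that tracing facet symmetries ``closes the cycle after four facets'' does not follow from the facets being parallelotopes, and you invoke the apexes only for distinctness, not for the count. The rhombic dodecahedron has parallelogram facets yet belts of length six, so some genuine use of the spindle structure is unavoidable in bounding belt length. The paper's argument is exactly such a use: by central symmetry of $S$ the belt around an $(n-2)$-face $G$ has $2k$ facets, $k$ of which contain the apex $a$; projecting along $\aff(G)$ onto a planar polygon sends these $k$ facets to $k$ distinct edges all containing the projection of $a$, forcing $k\le 2$. Unless you can justify the closure claim via the apex structure (which would essentially reproduce this projection argument), both pillars of your reduction remain unproven, even though the reduction itself is the same as the paper's final step.
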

\begin{proof} 
	Since all facets of $S$ are centrally symmetric, by the Alexandrov-Shephard theorem (see \cite{MR0410566} for a short proof), the polytope $S$ itself is also centrally symmetric. Without loss of generality, we assume that $S$ is symmetric in the origin. Let $a$ and $-a$ be the apexes of the spindle $S$. 
	
We first show that every belt of $S$ is of length $4$. Let $G$ be an arbitrary $(n-2)$-dimensional face of $S$ and consider the belt of $S$ associated with $G$. Since $S$ is centrally symmetric, each belt is even length, i.e., of length $2 k$ where $k \ge 2$. There are $k$ facets $F_1, \ldots, F_{k }$ involved in this belt that contain $a$; the remaining $k$ facets contain $-a$. We project $S$ onto the two-dimensional space perpendicular to $G$ to get a polygon $P$.  The facets $F_1, \ldots, F_{k}$ are all projected onto $k$ distinct edges of the polygon $P$. Moreover, observe the projection of $a$ is contained in all these edges. Since $P$ is two-dimensional, intersection of more than three edges of $P$ is empty. Hence $k \le 2$ and since we also have $k \ge 2$, we get $k=2$.

We next show that all faces of $S$ are centrally symmetric. To do this, we first show that every $n-2$-dimensional face $G$ is centrally symmetric (for $n=3$ this is clear). For $i \in \{1,2\}$, by $c_i$ we denote the center of symmetry of $F_i$. Then $G$ has the form $F_i \cap F_j$ or $(-F_i) \cap (-F_j)$ or $F_i \cap (-F_j)$ with appropriate $i,j$ satisfying $\{i,j\} = \{1,2\}$. Consider the case $G = F_i \cap F_j = F_1 \cap F_2$. The symmetry of $F_1$ implies that $2 c_1 - G$ (the reflection of $G$ with respect to $c_1$) is a face of $F_1$. Since $a \in G$, the face $2 c_1 - G$ does not contain $a$. The face $G$ is contained in exactly two facets of $S$, both belonging to the belt $\{F_1,F_2,-F_1,-F_2\}$ generated by $G$. The facet $-F_1$ cannot contain $2 c_1 - G$, because $-F_1$ is opposite to $F_1$ and thus does not share any nonempty face with $F_1$. The facet $F_2$ of $S$ cannot contain $2 c_1 - G$, because $F_2$ contains $a$, while $2 c_1 - G$ does not contain $a$. It follows that the facet $-F_2$ contains $2 c_1 - G$. Then the reflection $-2c_2 - (2 c_1 - G)$ of $2c_1 -G$ with respect to the center $-c_2$ of $-F_2$ is a facet of $-F_2$. On the other hand, the reflection $-G$ of $G$ with respect to the center $o$ of $S$ is a face of $S$ which does not contain $a$. Hence $-G$ is a facet of $-F_2$. We have shown that $2 c_1 -G$, $-2 c_2 - 2 c_1 + G$ and $-G$ are facets of $-F_2$. Since all these facets of $F_2$ are parallel, two of them must coincide. We cannot have $2 c_1 - G = -G$, since this would imply $c_1 = o$ and, by this, $\relintr(F_1) \cap \intr(S) \ne \emptyset$, which is a contradiction. Consequently, $2 c_1 - G = -2 c_2 - 2 c_1 + G$ or $-2 c_2 - 2 c_1 + G = -G$, where each of the two equalities implies that $G$ is centrally symmetric. The case $G = (-F_i) \cap (-F_j)$ is completely analogous to the case $G = F_i \cap F_j$.

Let us switch to the case $G = F_i \cap (-F_j)$ with $\{i,j\} = \{1,2\}$. Without loss of generality, let $G = F_1 \cap (-F_2)$. The face $G$ of $S$ contains neither $a$ nor $-a$. The same also holds for the face $-G$ of $S$. The reflection $2 c_1 - G$ of $G$ with respect to the center $c_1$ of $F_1$ is a facet of $F_1$. Then $2 c_1 - G$ is not a facet of $-F_1$, because $2 c_1 - G$ is a facet of $F_1$, while $F_1$ and $-F_1$ are opposite facets of $S$. Thus, $2 c_1 - G$ is a facet of $F_2$ or $-F_2$. If $2c_1 - G$ is a facet of $F_2$, then also $2 c_2 - (2 c_1 - G)$ is a facet of $F_2$. It follows that $2 c_1 - G, 2 c_2 - 2c_1 + G$ and $-G$ are facets of $F_2$. Again, since they are all parallel, two of them must coincide. Coincidence of $2 c_1 - G$ and $-G$ implies $c_1 =o$ and yields a contradiction. Coincidence of any two other of these three facets of $F_2$ implies that $G$ is centrally symmetric. In the case that $2 c_1 - G$ is a facet of $-F_2$, we get that $- 2 c_2 - (2 c_1 - G)$ is a facet of $-F_2$. Thus, $G$, $-2 c_2- G$ and $- 2 c_2 - 2 c_1 + G$ are facets of $-F_2$. Coincidence of $G$ and $- 2 c_2 - 2 c_1 + G$ implies $c_1 = c_2$, yielding $\relintr(F_1) \cap \relintr(F_2) \ne \emptyset$, which is a contradiction. Coincidence of any other of these three facets of $S$ implies that $G$ is centrally symmetric.

It follows that every $(n-2)$-dimensional face of $S$ is centrally symmetric. Therefore, by a theorem of McMullen~\cite{mcmullen}, in the case $n \ge 4$, every face of $S$ is centrally symmetric (in the case $n=3$ this is clear from the assumptions). Consequently, all $2$-dimensional faces of $S$ are centrally symmetric and, by this, $S$ is a zonotope; see, for example,  \cite[Theorem~3.5.1]{MR1216521}. 
Since $S$ is a zonotope whose belts are length $4$, by Lemma~\ref{lem:zonotope-belt}, $S$ is the image of the $n$-dimensional hypercube under an invertible affine transformation.\end{proof}

Theorem~\ref{thm:cen-sym-spindle} was communicated to us by Peter McMullen via personal email. We include a complete proof here as the result does not appear explicitly in the literature. The above proof is based on a proof sketch by Prof. McMullen.

We now state the celebrated Venkov-Alexandrov-McMullen theorem on translative tilings.

\begin{theorem}\label{thm:venkov} \thmheader{Venkov-Alexandrov-McMullen; see \cite[Theorem~32.2]{gruber}}
Let $P$ be a compact convex set with nonempty interior that translatively tiles $\R^n$. Then the following assertions hold: 
\begin{enumerate}[(a)]
	\item $P$ is a centrally symmetric polytope. 
	\item All facets of $P$ are centrally symmetric. 
	\item Every belt of $P$ is either length $4$ or $6$.
\end{enumerate}
\end{theorem}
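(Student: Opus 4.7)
The plan is to establish the three assertions in sequence, extracting combinatorial structure from the purely geometric hypothesis.

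For (a), I would first prove $P$ is a polytope. Let $U \subseteq \R^n$ be the translation set, so $\R^n = \bigcup_{u \in U}(P+u)$ with pairwise disjoint interiors. For each $x \in \partial P$ there exists $u \in U \setminus \{0\}$ with $x \in P \cap (P+u)$, and a standard local-finiteness argument (using compactness of $P$ together with positive-volume separation of interiors) shows that only finitely many $u \in U$ satisfy $P \cap (P+u) \neq \emptyset$. Each such intersection is a compact convex subset of a supporting hyperplane of $P$, and the union of these finitely many pieces equals $\partial P$; hence $\partial P$ decomposes into finitely many flat pieces and $P$ is a polytope. Each facet $F$ of $P$ arises as $F = P \cap (P + u_F)$ for a unique $u_F \in U \setminus \{0\}$. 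Applying the same identity to the translate $P + u_F$ (whose tiling partner in the direction of $-u_F$ is $P$ itself) forces $F - u_F$ to be a parallel facet of $P$. A careful volume or reassembly argument then shows that the facet-pairing map $F \mapsto F - u_F$ acts as a central involution, from which central symmetry of $P$ follows.

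For (b), once $P$ is known to be a centrally symmetric polytope, I would analyze the local tiling structure along each facet $F$. The translates of $P$ that meet $F$ in an $(n-2)$-dimensional face induce, on the hyperplane $\aff(F)$, a partition of $F$ into pieces that are shared with neighboring facets of neighboring tiles. Gluing this local picture across all translates yields a full translative tiling of $\aff(F) \cong \R^{n-1}$ by translates of $F$. Part (a) applied to this lower-dimensional tiling then gives that $F$ is centrally symmetric.

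For (c), I would fix an $(n-2)$-dimensional face $G$ of $P$ and pass to the quotient map $\pi : \R^n \to \R^n / L \cong \R^2$, where $L$ is the $(n-2)$-dimensional linear subspace parallel to $\aff(G)$. Locally near $\relintr(G)$ the tile $P$ looks like a cylinder over a planar convex set, and the translates of $P$ in the belt of $G$ induce a translative tiling of $\R^2$ by translates of $Q := \pi(P)$. The edges of $Q$ correspond bijectively to the facets of $P$ in the belt of $G$, so the belt length of $G$ equals the number of edges of $Q$. The classical two-dimensional fact that any convex planar body which translatively tiles the plane is either a parallelogram or a centrally symmetric hexagon then forces $Q$ to have either $4$ or $6$ edges, yielding the claim.

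The main obstacle is the central-symmetry statements in (a) and (b). Polytopality is a direct local-finiteness argument, and the belt constraint in (c) reduces cleanly to two dimensions where the result is elementary, but proving that the tile and all its facets must be centrally symmetric is the genuinely deep content of the theorem and requires global information about how translates fit together. A further delicate point is that $U$ is not assumed to be a lattice, so Venkov's original lattice-based argument does not apply directly; the standard cure (after Alexandrov, later simplified by McMullen) is to first show that the facet-pairing translations generate a lattice in the appropriate subspace and then run a volume-balance argument across each pair of parallel facets.
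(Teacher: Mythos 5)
The paper does not actually prove this statement: Theorem~\ref{thm:venkov} is the classical Venkov--Alexandrov--McMullen theorem, quoted with a citation to \cite[Theorem~32.2]{gruber} and used as a black box in the proof of Theorem~\ref{thm:trans-spindle}. So there is no in-paper proof to compare against; your sketch has to be judged on its own merits, and it has genuine gaps exactly at the places where the real difficulty of the theorem lies.

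First, in (a) you assert that each facet $F$ of $P$ arises as $F = P \cap (P+u_F)$ for a unique $u_F \in U$. This is false for general translative tilings, which need not be facet-to-facet: already the ``brick'' tiling of $\R^2$ by unit squares offset by $1/2$ in alternate rows has a tile whose bottom edge meets two neighbours, each in half of that edge. The standard route to central symmetry is different: one shows that for each facet normal $u$ the $(n-1)$-volumes of $F(P,u)$ and $F(P,-u)$ coincide (a volume-balance argument over a large ball) and then invokes Minkowski's uniqueness theorem for polytopes with prescribed facet normals and facet areas to conclude $P = -P + t$. Second, and more decisively, your argument for (b) is circular and cannot work as stated. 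Once (a) is known, the facet opposite to $F$ is the point reflection $2c - F$, i.e.\ a translate of $-F$, not of $F$; hence the hyperplane $\aff(F)$ is covered, with disjoint relative interiors, by translates of $F$ \emph{and} of $-F$. A tiling of $\R^{n-1}$ by translates of $K$ and $-K$ does not force $K$ to be centrally symmetric --- a triangle and its point reflection tile the plane by translations --- so applying part (a) ``one dimension down'' cannot yield (b). That step is the deep content of the theorem (due to Alexandrov and McMullen) and needs a genuinely different global argument. The reduction in (c) has a similar flaw: the projection of the entire tiling along the linear span of $G$ is not a tiling of $\R^2$, since tiles far from the column over $G$ also project onto the plane and their images overlap. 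The correct version of your idea is local, analysing the two-dimensional fan of tangent cones of the tiles surrounding a translate of $G$, together with the central symmetry already established in (a) and (b).
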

\begin{proof}[Proof of Theorem~\ref{thm:trans-spindle}]
We only need to consider the case $n \ge 3$. The assertion follows directly from Theorem~\ref{thm:venkov} (assertions (a) and (b)) and Theorem~\ref{thm:cen-sym-spindle}.
\end{proof}

\end{document}